\newtheorem{theorem}{Theorem}[section]
\newtheorem{corollary}[theorem]{Corollary}
\newtheorem{definition}[theorem]{Definition}
\newtheorem{convention}[theorem]{Convention}
\newtheorem{lemma}[theorem]{Lemma}
\newtheorem{fact}[theorem]{Fact}
\newtheorem{remark}[theorem]{Remark}
\newtheorem{question}[theorem]{Question}
\newtheorem{conjecture}[theorem]{Conjecture}
\begin{document}

\title{The Flat Cover Conjecture for monoid acts}

\author{Sean Cox}
\email{scox9@vcu.edu}
\address{
Department of Mathematics and Applied Mathematics \\
Virginia Commonwealth University \\
1015 Floyd Avenue \\
Richmond, Virginia 23284, USA 
}

\date{\today}

\thanks{Partially supported by NSF grant DMS-2154141.  We thank James Renshaw for clarifying some results from \cite{MR3249868} and \cite{MR1899443}, and Jun Maillard and Leonid Positselski for their comments during the author's Charles University Algebra Seminar on this topic in May 2025.}

\subjclass[2020]{20M30, 20M50, 18A32, 03E75 }

\begin{abstract}

We prove that the Flat Cover Conjecture holds for the category of (right) acts over any right-reversible monoid $S$, provided that the flat $S$-acts are closed under stable Rees extensions.  The argument shows that the class $\mathcal{F}$-Mono ($S$-act monomorphisms with flat Rees quotient) is cofibrantly generated in such categories, answering a question of Bailey and Renshaw.  But cofibrant generation of $\mathcal{SF}$-Mono ($S$-act monomorphisms with \emph{strongly} flat Rees quotient) appears much stronger, since we show it implies that there is a bound on the size of the indecomposable strongly flat acts.  Similarly, cofibrant generation of $\mathcal{U}_{\mathcal{F}}$ (unitary monomorphisms with flat complement) implies a bound on the size of indecomposable flat acts.  The key tool is a new characterization of cofibrant generation of a class of monomorphisms in terms of ``almost everywhere" effectiveness of the class.
\end{abstract}

\maketitle


\section{Introduction}\label{sec_intro}

A classic theorem of Bass says that every $R$-module has a projective cover if and only if the ring $R$ is perfect.  Enochs conjectured in the 1980's that every $R$-module has a \emph{flat} cover, for any ring $R$.  This became known as the \emph{Flat Cover Conjecture} (FCC), and was proved around the year 2000 (\cite{MR1832549}).  Subsequent work of many authors proved analogues of the FCC in other additive categories, often with the motivation of doing homological algebra relative to the flat objects (instead of the projectives), e.g., \cite{MR2578515}, \cite{MR4121092}, \cite{MR3649814}, \cite{MR2673740}.  

The question of whether the FCC holds in \emph{non-additive} categories has proved more difficult.\footnote{It is further complicated by the fact that ``geometric flat object" (tensoring with it preserves monomorphisms) and ``categorical flat object" (being a directed colimit of finitely presented projectives), while equivalent in $R$-Mod for every ring $R$, are generally different in non-additive categories.  In categories like Act-$S$, ``flat" denotes the geometric version, while ``strongly flat" denotes the categorical version.}  For example, regarding the category Act-$S$ of (nonempty, right-) $S$-acts over a monoid $S$, Bailey and Renshaw~\cite[Corollary 4.14]{MR3251751} prove that FCC holds in Act-$S$ \emph{if} there is a bound on the size of the indecomposable flat $S$-acts.  This holds, for example, if $S$ is right-cancellable, in which case there is even a bound on the size of the indecomposable torsion-free acts \cite[Theorem 5.4]{MR3079784}.  But there are not always bounds on sizes of indecomposable flat acts (\cite[Example 5.13]{MR3251751}), and whether FCC holds for general $S$ is open.  In \cite{MR3249868} they mention the closely related problem of whether the class
\[
\mathcal{F}_S \text{-Mono}:= \{ \text{S-act monomorphisms with flat Rees quotient} \}
\]
is \emph{cofibrantly generated} (i.e., whether it is the closure of a \textbf{set} of morphisms under pushouts, transfinite compositions, and retracts).   The corresponding class is always cofibrantly generated in module categories (Rosick\'y~\cite{MR1925005}), but Bailey-Renshaw state that the proof there ``seems to depend on the additive structure of the category of modules", and they mention two problems:

\begin{question}[\cite{MR3249868}]
Is $\mathcal{F}_S$-Mono cofibrantly generated in Act-$S$?  The question implicitly assumes $S$ is right-reversible, because otherwise $\mathcal{F}_S$-Mono is empty (\cite[Corollary 4.9]{MR0829389}).
\end{question}

\begin{question}
[\cite{MR3249868}] Suppose $S$ is a monoid with a left zero.  Is $\mathcal{F}_S$-Mono cofibrantly generated in the category Act$_0$-$S$ of centered $S$-acts (acts with a unique fixed point)?
\end{question}

Theorem \ref{thm_Fmono_cofibgen} below answers both questions affirmatively, as long as $\mathcal{F}_S$-Mono is closed under composition.  Because Act-$S$ lacks an initial object (which is useful for deriving covers), we will often work instead with $\overline{\text{Act}}$-$S$ that includes the empty act as an initial object. In $\overline{\text{Act}}$-$S$, $\mathcal{F}_S$-Mono also includes morphisms of the form $\emptyset \to F$ with $F$ nonempty and flat.  

\begin{theorem}\label{thm_Fmono_cofibgen}

The following are equivalent for any monoid $S$.
\begin{enumerate}
 \item\label{item_S_rr} $S$ is right-reversible, and $\mathcal{F}_S$-Mono is closed under compositions in $\overline{\text{Act}}$-$S$.

 \item\label{item_F_Mono_cofibGen} $\mathcal{F}_S$-Mono is cofibrantly generated in $\overline{\text{Act}}$-$S$.
\end{enumerate}

\noindent If $S$ has a left zero (which implies right-reversibility), the following are equivalent:

\begin{enumerate}
 \item $\mathcal{F}_S$-Mono is closed under compositions in Act$_0$-$S$.
 \item $\mathcal{F}_S$-Mono is cofibrantly generated in Act$_0$-$S$.
\end{enumerate}

\end{theorem}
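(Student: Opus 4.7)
The plan is to deduce both implications from the paper's new characterization: a class $\mathcal{M}$ of monomorphisms is cofibrantly generated iff $\mathcal{M}$ is \emph{almost everywhere effective}. Given this, the direction $(\ref{item_F_Mono_cofibGen})\Rightarrow(\ref{item_S_rr})$ is largely formal. Closure of $\mathcal{F}_S$-Mono under binary composition follows from closure under transfinite composition, which holds for any cofibrantly generated class. To force right-reversibility, I would exploit the small object argument: cofibrant generation yields, for each $A\in\overline{\text{Act}}$-$S$, a factorization of $\emptyset\to A$ through a morphism in $\mathcal{F}_S$-Mono, and the existence of such factorizations in sufficient abundance produces nontrivial flat Rees quotients whose existence is reserved to right-reversible $S$ by \cite[Corollary 4.9]{MR0829389}.

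For the main direction $(\ref{item_S_rr})\Rightarrow(\ref{item_F_Mono_cofibGen})$, one must verify almost everywhere effectiveness. Right-reversibility supplies closure of $\mathcal{F}_S$-Mono under pushouts in $\overline{\text{Act}}$-$S$, and closure under composition is hypothesized, so the class already satisfies the cellular closure properties. What remains is the set-theoretic content: for any $f\colon A\hookrightarrow B$ with flat Rees quotient $B/A$, one must show that for club-many sufficiently small subacts $B'\subseteq B$, the induced inclusion $A\cap B'\hookrightarrow B'$ again has flat Rees quotient. The natural approach is a L\"owenheim--Skolem style argument on $B$, exploiting that flatness of $S$-acts is witnessed by finitary equational data---Condition (P), the interpolation conditions on equalities $sa=tb$, and so on---which is reflected down by sufficiently closed elementary substructures.

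The main obstacle I anticipate is the interaction between the elementary-substructure reflection and the Rees-quotient construction. One needs the test $B'$ chosen so that $A\cap B'$ is exactly the preimage of the basepoint in $B'/(A\cap B')$, and so that this quotient embeds naturally into $B/A$; only then is the flatness of $B/A$ transferable. The paper's ``almost everywhere effectiveness'' machinery is presumably designed to package precisely the closure conditions needed on the collection of test $B'$'s, allowing one to restrict attention to a club of sufficiently elementary witnesses rather than every small subact.

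For the centered version (Part 2), the argument is identical in Act$_0$-$S$: the left zero supplies a canonical base point that plays the role of the adjoined empty act, removing the need to pass to $\overline{\text{Act}}$-$S$, and left zero automatically implies right-reversibility, which is why the right-reversibility hypothesis does not appear explicitly in Part 2.
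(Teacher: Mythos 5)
Your overall strategy---reduce cofibrant generation to almost-everywhere effectiveness via Theorem \ref{thm_NewChar}---is exactly the paper's, but your verification of a.e.\ effectiveness in the direction $(\ref{item_S_rr})\Rightarrow(\ref{item_F_Mono_cofibGen})$ addresses only half of what Definition \ref{def_EffectiveSubobj} demands. You check (in outline) that the restricted map $f\restriction\mathfrak{N}\colon \mathfrak{N}\cap A\to\mathfrak{N}\cap B$ again has flat Rees quotient, but a.e.\ effectiveness also requires that $r^{f,\mathfrak{N}}\colon A\cup(\mathfrak{N}\cap B)\to B$ lie in $\mathcal{F}$-Mono, i.e.\ that the \emph{large} quotient $B/\bigl(A\cup(\mathfrak{N}\cap B)\bigr)$ be flat. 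This is not a ``small subact'' condition and is where the real work sits: one identifies $B/\bigl(A\cup(\mathfrak{N}\cap B)\bigr)\simeq F/(\mathfrak{N}\cap F)$ for $F=B/A$ (Lemma \ref{lem_HomThm}\ref{item_QuotPartOverN}), notes that $\mathfrak{N}\cap F\to F$ is pure, hence stable (Lemma \ref{lem_ElementaryIsPure}), and only then does Renshaw's Lemma \ref{lem_Renshaw}(2) give flatness of the quotient---and this is the \emph{only} place right-reversibility is used in this direction. Tellingly, the half you do verify never invokes right-reversibility (since $(\mathfrak{N}\cap B)/(\mathfrak{N}\cap A)\simeq\mathfrak{N}\cap F$ is a pure subact of a flat act, hence flat for any $S$), so a proof that stops there cannot be complete. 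Relatedly, your appeal to ``finitary equational data, Condition (P)'' for reflecting flatness is off target: Condition (P) characterizes something strictly stronger than flatness; the correct mechanism is that elementary subacts are pure and pure subacts of flat acts are flat.

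In the converse direction your derivation of right-reversibility is too vague to stand: the small object argument factors $\emptyset\to A$ as $\emptyset\to Q\to A$ with $Q$ flat, but that only produces flat \emph{acts} (which exist over every monoid, e.g.\ $S$ itself), not a monomorphism of \emph{nonempty} acts with flat Rees quotient, which is what Lemma \ref{lem_Renshaw}(1) needs. You must extract such a map explicitly---either by observing that closure under pushouts forces $A\to A\sqcup F$ into $\mathcal{F}$-Mono for nonempty $A$ and flat $F$, or, as the paper does, by applying the already-established a.e.\ effectiveness of the cellular part to $f\colon\emptyset\to B$ and a suitable $\mathfrak{N}$ to place the inclusion $\mathfrak{N}\cap B\to B$ (with $\mathfrak{N}\cap B\neq\emptyset$) in $\mathcal{F}$-Mono.
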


Closure of $\mathcal{F}_S$-Mono under compositions in $\overline{\text{Act}}$-$S$ is equivalent to requiring that flat $S$-acts are closed under stable Rees extensions, which means that if $A$ is a stable\footnote{\emph{Stable subact} is a weakening of \emph{pure subact}.} subact of the $S$-act $B$, $A$ is flat, and the Rees quotient $B/A$ is flat, then $B$ is flat.  This additional assumption seems to also be needed for the proof of part 2 of \cite[Theorem 3.11]{MR3249868}.  Theorem \ref{thm_Fmono_cofibgen}, together with an argument of Rosick\'y (see Lemma \ref{lem_CellGenExpandedGivesCovers}), yields:

\begin{corollary}\label{cor_Positive}
Suppose $S$ is a right-reversible monoid, and
flat $S$-acts are closed under stable Rees extensions.  Then FCC holds in Act-$S$.  If, in addition, $S$ has a left zero, then the FCC holds in Act$_0$-$S$.
\end{corollary}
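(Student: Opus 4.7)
The plan is to combine Theorem \ref{thm_Fmono_cofibgen} with the cited Lemma \ref{lem_CellGenExpandedGivesCovers} (Rosick\'y's argument), and then descend the resulting covers from $\overline{\text{Act}}$-$S$ to Act-$S$. First I would verify the hypothesis of Theorem \ref{thm_Fmono_cofibgen}: right-reversibility of $S$ is assumed outright, and as noted in the paragraph following that theorem, closure of flat $S$-acts under stable Rees extensions is exactly the statement that $\mathcal{F}_S$-Mono is closed under composition in $\overline{\text{Act}}$-$S$. Thus condition (\ref{item_S_rr}) holds, and Theorem \ref{thm_Fmono_cofibgen} gives that $\mathcal{F}_S$-Mono is cofibrantly generated in $\overline{\text{Act}}$-$S$.

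Next I would invoke Lemma \ref{lem_CellGenExpandedGivesCovers} to conclude that every object of $\overline{\text{Act}}$-$S$ admits an $\mathcal{F}_S$-cover. This packages the standard Rosick\'y-style argument: a generating set $\mathcal{I}$ produces a weak factorization system via the small-object argument whose left class consists of transfinite compositions of pushouts of $\mathcal{I}$-morphisms and is contained in $\mathcal{F}_S$-Mono, so factoring $\emptyset \to X$ yields a special $\mathcal{F}_S$-precover of each object $X$. Combined with closure of the flat $S$-acts under directed colimits, Enochs' retract argument promotes these special precovers to actual covers.

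Finally, for a nonempty object $M$ of Act-$S$, any $\mathcal{F}_S$-precover $F \to M$ produced in $\overline{\text{Act}}$-$S$ must have $F$ nonempty, since the free act on one generator is a nonempty flat $S$-act and any element of $M$ gives a map from it to $M$, which the precover property forces to factor through $F$. Hence the cover lies in Act-$S$, proving FCC there. For the centered case, the identical argument runs in Act$_0$-$S$, since a monoid with a left zero is automatically right-reversible and the unique fixed point plays the role of initial object. The main obstacle I anticipate is precisely this descent step — ensuring the covers constructed in the enlarged category remain covers in the original one, and that no subtlety about ``Rees quotients of maps from $\emptyset$'' is lost in the transfer; the rest is a direct appeal to Theorem \ref{thm_Fmono_cofibgen} and the imported Lemma \ref{lem_CellGenExpandedGivesCovers}.
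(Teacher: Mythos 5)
Your proposal is correct and follows the paper's own route exactly: verify hypothesis (\ref{item_S_rr}) of Theorem \ref{thm_Fmono_cofibgen} (right-reversibility plus the equivalence between closure of flats under stable Rees extensions and closure of $\mathcal{F}_S$-Mono under composition), then apply Lemma \ref{lem_CellGenExpandedGivesCovers}. The ``descent'' concern you flag at the end is already absorbed into that lemma, whose conclusion is stated for objects of Act-$S$ and whose proof establishes nonemptiness of the cover via the same $\text{Hom}(X,A)\neq\emptyset$ observation you make.
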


We do not know how restrictive is the requirement that $\mathcal{F}_S$-Mono is closed under composition in $\overline{\text{Act}}$-$S$ (equivalently, flat $S$-acts are closed under stable Rees extensions in 
Act-$S$).  It holds, for example, over monoids where every quasi-flat act is flat (by \cite[Lemma 4.7]{MR0829389}), or when every stable monomorphism is pure (by \cite[Theorem IV.1.5]{renshaw1986flatness}).  The word \emph{stable} in ``flat $S$-acts are closed under stable Rees extensions" is really superfluous, since any nontrivial instance of a flat Rees quotient implies stability of the inclusion; see Lemma \ref{lem_Renshaw}.

While closure of $\mathcal{F}_S$ under stable Rees extensions is trivially necessary to get cofibrant generation of $\mathcal{F}_S$-Mono, we conjecture that this assumption can be eliminated from the statement of Corollary \ref{cor_Positive}.  See Section \ref{sec_Questions} for a discussion.  We can also achieve the FCC for Act-$S$ if we replace the assumption of closure of flats under stable Rees extensions with an assumption about $S$ that is stronger than right-reversibility:

\begin{theorem}\label{thm_LO}
Suppose $S$ is a monoid that is right-LO, i.e., for every $s,t \in S$, there is a $u \in S$ such that either $su=t$ or $tu=s$.  Then:
\begin{enumerate}
 \item The class
 \[
 \mathcal{F}\text{-PureMono}:= \left\{ \text{pure monomorphisms with flat Rees quotient} \right\} 
 \]
 is cofibrantly generated in $\overline{\text{Act}}$-$S$.
  \item The FCC holds in Act-$S$.
\end{enumerate}
\end{theorem}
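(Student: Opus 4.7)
The plan is to reduce Theorem \ref{thm_LO} to a pure-monomorphism version of Theorem \ref{thm_Fmono_cofibgen} combined with Rosick\'y's covering machinery (Lemma \ref{lem_CellGenExpandedGivesCovers}). There are essentially three ingredients:\ showing that right-LO implies right-reversibility, establishing that $\mathcal{F}$-PureMono is closed under composition in $\overline{\text{Act}}$-$S$, and invoking the general cofibrant-generation framework.

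The first point is immediate: for any $s,t \in S$, right-LO gives $u$ with $t = su \in sS$ or $s = tu \in tS$, so the principal right ideals of $S$ are linearly ordered by inclusion, and in particular $sS \cap tS \neq \emptyset$.

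The main work goes into the closure-under-composition step. Pure monomorphisms compose to pure monomorphisms on general grounds, so the question is whether, given pure inclusions $A \subseteq B \subseteq C$ with $B/A$ and $C/B$ flat, the Rees quotient $C/A$ is flat. I would attack this through the tensor-product characterization of flatness:\ for a left-act monomorphism $M \hookrightarrow N$, verify that $(C/A) \otimes M \to (C/A) \otimes N$ is injective. The idea is to analyze a tossing relation in $(C/A) \otimes M$ by invoking the linear right-divisibility order on $S$ supplied by right-LO:\ any two relevant monoid elements are comparable, so the chain of tossings can be normalized so that a single direction of right-divisions governs the whole computation. This normalization should let me reduce each potential collapse to either one inside $(B/A)\otimes M$ (using flatness of $B/A$) or, after lifting across the pure inclusion $B \subseteq C$, one inside $(C/B)\otimes M$ (using flatness of $C/B$). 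Purity of both inclusions is precisely what makes these lifts available.

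With closure under composition in hand, I would then appeal to the ``almost everywhere effectiveness" criterion that underpins the proof of Theorem \ref{thm_Fmono_cofibgen}, adapted to the pure-mono setting — the same locally presentable framework applies, because the class of pure monos in $\overline{\text{Act}}$-$S$ is itself cofibrantly generated. This yields part (1). Part (2) then follows by feeding this cofibrant generation into Lemma \ref{lem_CellGenExpandedGivesCovers} to produce flat covers in Act-$S$. I expect the second step above to be the main obstacle:\ without the stable-Rees hypothesis of Corollary \ref{cor_Positive} one cannot invoke that corollary directly, and the right-LO combinatorics must genuinely substitute for it. A secondary technical point is checking that the cofibrant-generation machinery transfers cleanly from the full class $\mathcal{F}_S$-Mono to its purity-restricted subclass, which is plausible since purity is an $\aleph_0$-accessible condition on monomorphisms but must nonetheless be verified.
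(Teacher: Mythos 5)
Your proposal misallocates the work. The step you identify as the main obstacle --- closure of $\mathcal{F}$-PureMono under composition --- does not need the right-LO hypothesis at all: it is a known fact, valid over \emph{every} monoid, that pure monomorphisms with flat Rees quotient compose (Renshaw's thesis, Theorem IV.1.5 of \cite{renshaw1986flatness}), and the paper simply cites this. So the tossing/normalization argument you sketch, besides being only a sketch, is effort spent proving something already available without any hypothesis on $S$.

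The genuine crux, which you dismiss as a ``secondary technical point,'' is exactly where right-LO earns its keep. Running the almost-everywhere-effectiveness argument for $\mathcal{F}$-PureMono, one must show that for $f\colon A\subseteq B$ in $\mathfrak{N}\cap\mathcal{F}\text{-PureMono}$ the map $r^{f,\mathfrak{N}}\colon A\cup(\mathfrak{N}\cap B)\to B$ is again \emph{pure}. Flatness of its Rees quotient and purity of $f\restriction\mathfrak{N}$ come from elementarity and Corollary \ref{cor_TraceFlat}, but purity of $r^{f,\mathfrak{N}}$ does not follow from purity being an accessible ($\aleph_0$-presentable) condition, nor from any soft transfer argument: the paper's own proof obtains it by first noting that right-LO implies PureMono is cofibrantly generated in $\overline{\text{Act}}$-$S$ (via \cite{MR2280436}, \cite{MR4121092}, \cite{Cox_et_al_CofGenPureMon}), then applying the \ref{item_CellGen}~$\implies$~\ref{item_AE} direction of Theorem \ref{thm_NewChar} to conclude PureMono is a.e.\ effective, which is precisely the statement that $r^{f,\mathfrak{N}}$ is pure for almost all $\mathfrak{N}$. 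You do mention in passing that pure monos are cofibrantly generated, but you present it as a background fact making ``the framework apply,'' when in fact it is \emph{equivalent} to $S$ being right-LO (for non-right-LO $S$, PureMono is not cofibrantly generated, as noted in Section \ref{sec_Questions}), and it is the sole point where the hypothesis of the theorem is used. As written, your argument would not close this gap; once it is closed the rest (intersecting with $\mathcal{F}$-Mono, then Lemma \ref{lem_CellGenExpandedGivesCovers} for part (2)) proceeds as you and the paper both describe.
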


We turn now to the notion of \emph{strongly flat} ($\mathcal{SF}$) acts and the corresponding question of whether every $S$-act has an $\mathcal{SF}$-cover; if so, we will say that the \emph{Strongly Flat Cover Conjecture} (SFCC) holds for Act-$S$.  Kruml~\cite{MR2432817} gives an example of a particular (non-right-reversible) monoid where the SFCC fails.\footnote{Kruml uses \emph{flat} to denote what is more commonly called \emph{strongly flat}.  See \cite{MR3547004} for a clarification.}  Bailey and Renshaw~\cite{MR3547004} ask for a characterization of those monoids where SFCC holds.  We are unable to answer this question, but we can shed light on when $\mathcal{SF}$-Mono is cofibrantly generated (a sufficient condition for SFCC to hold).  Cofibrant generation of $\mathcal{SF}$-Mono appears much stronger than cofibrant generation of $\mathcal{F}$-Mono, because the former implies a bound on the cardinalities of the indecomposable strongly flat acts, and left-collapsibility (not just right-reversibility) of $S$:

\begin{theorem}\label{thm_SF}
For any monoid $S$, the following are equivalent:
\begin{enumerate}
 \item\label{item_SF_MonoCofibGen} $\mathcal{SF}$-Mono is cofibrantly generated in $\overline{\text{Act}}$-$S$.

 \item\label{item_SF_LeftCollapseIndec} $S$ is left-collapsible, $\mathcal{SF}$-Mono is closed under compositions in $\overline{\text{Act}}$-$S$, and there is a bound on the size of the indecomposable strongly flat $S$-acts.

\end{enumerate}
\end{theorem}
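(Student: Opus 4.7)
The plan is to leverage the paper's new characterization of cofibrant generation of a class of monomorphisms via ``almost everywhere'' effectiveness, the tool underlying Theorem \ref{thm_Fmono_cofibgen}, together with the structural theory of strongly flat $S$-acts.

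For (\ref{item_SF_MonoCofibGen}) $\Rightarrow$ (\ref{item_SF_LeftCollapseIndec}), closure of $\mathcal{SF}$-Mono under compositions is immediate, since every cofibrantly generated class is closed under transfinite, and hence binary, compositions. For the bound on indecomposable strongly flat acts, let $I$ be a generating set and set $\lambda = \sup\{|B| : (A \to B) \in I\}$. Given an indecomposable strongly flat $F$, the morphism $\emptyset \to F$ is a retract of a transfinite composition of pushouts of $I$-maps, and I would deduce via a small object factorization, together with retract closure and indecomposability of $F$, that $|F|$ is bounded by a cardinal function of $\lambda$. For left-collapsibility, I would investigate how cofibrant generation constrains $\mathcal{SF}$-Mono, plausibly via the small object argument applied to the map $\emptyset \to \{*\}$ together with retract closure, in order to force $\{*\}$ itself to be strongly flat, which is equivalent to $S$ being left-collapsible.

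For the converse (\ref{item_SF_LeftCollapseIndec}) $\Rightarrow$ (\ref{item_SF_MonoCofibGen}), let $\lambda$ bound the size of indecomposable strongly flat acts, and let $I$ consist of representatives of the $\mathcal{SF}$-Mono morphisms $A \to B$ with $|B| \leq \lambda$. I would verify, via the almost-everywhere-effectiveness criterion, that every $f : A \hookrightarrow B$ in $\mathcal{SF}$-Mono is a transfinite composition of pushouts of $I$-maps. The idea is to decompose the Rees quotient $B/A$ into its indecomposable strongly flat components (each of size $\leq \lambda$ by hypothesis and left-collapsibility) and attach them one at a time to $A$; closure of $\mathcal{SF}$-Mono under compositions then ensures that the partial pushouts remain in $\mathcal{SF}$-Mono throughout the transfinite construction.

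The main obstacle I anticipate is the bound-on-indecomposables half of (\ref{item_SF_MonoCofibGen}) $\Rightarrow$ (\ref{item_SF_LeftCollapseIndec}). A naive cell-complex argument does not immediately bound $|F|$ for $F$ indecomposable, since one could a priori attach many small pieces and connect them incrementally within a single component to build an arbitrarily large connected act. The real content must be a retract argument: the small object factorization $\emptyset \to F' \to F$, together with indecomposability and strong flatness of $F$, should force $F$ to be a retract of a single ``small'' cell, contradicting $|F|$ being large. Getting the cardinal bookkeeping correct in this non-additive setting, where Rees quotients and indecomposable components interact subtly with strong flatness, will be the delicate point.
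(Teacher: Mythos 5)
Your overall strategy --- reducing everything to the almost-everywhere-effectiveness criterion of Theorem \ref{thm_NewChar} --- is the right one, and your sketch of (\ref{item_SF_LeftCollapseIndec}) $\Rightarrow$ (\ref{item_SF_MonoCofibGen}) is essentially the paper's argument: decompose $F = B/A$ into indecomposable components $F_i$, observe that each component indexed in $\mathfrak{N}$ has size $<\kappa$ and hence lies entirely inside $\mathfrak{N}$, while components indexed outside $\mathfrak{N}$ miss $\mathfrak{N}$ entirely, so that $\mathfrak{N}\cap F$ and $F/(\mathfrak{N}\cap F) \simeq \Theta_S \sqcup \bigsqcup_{i\in I\setminus\mathfrak{N}} F_i$ are both strongly flat. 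Be explicit about where left-collapsibility enters this direction: it is exactly what makes the one-point act $\Theta_S$ strongly flat, so that the collapsed point appearing in the Rees quotient is harmless. (Your alternative of literally attaching components one at a time runs into trouble at the component containing the collapsed class of $A$, whose preimage in $B$ is as large as $A$; the submodel formulation avoids this.)

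The forward direction is where your proposal has genuine gaps, and you have correctly located the hardest one. First, your plan for left-collapsibility does not work as stated: factoring $\emptyset \to \{*\}$ through the small object argument produces a strongly flat $Q$ mapping onto $\{*\}$, but such a $Q$ always exists ($S$ itself), so nothing forces $\{*\}$ to be strongly flat. The paper instead invokes Renshaw's theorem that a monomorphism of \emph{nonempty} acts with strongly flat Rees quotient forces $S$ to be left-collapsible, and manufactures such a monomorphism by applying a.e.-effectiveness to a map $\emptyset \to B$ in the cellular closure: the resulting $r^{f,\mathfrak{N}}\colon \mathfrak{N}\cap B \to B$ has nonempty domain and strongly flat Rees quotient. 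Second, and more seriously, the bound on indecomposables cannot follow from a generic retract or cell-complex argument of the kind you gesture at: any such argument would apply verbatim to $\mathcal{F}$-Mono, which Theorem \ref{thm_Fmono_cofibgen} shows can be cofibrantly generated even when there is no bound on the size of indecomposable flat acts. The argument must exploit what is special about $\mathcal{SF}$, namely Lemma \ref{lem_SF_locallyCyc} (indecomposable strongly flat acts are locally cyclic) and Lemma \ref{lem_SFquot_PureInc} (a strongly flat Rees quotient forces the inclusion to be pure). The paper's proof takes a locally cyclic strongly flat $F$ with $|F|>\kappa$, builds two elementary submodels $\mathfrak{M}$ (of size $\kappa$ with $\kappa\subseteq\mathfrak{M}$) and $\mathfrak{N}$ (of size $<\kappa$ with $\mathfrak{M}$ an element but not a subset of $\mathfrak{N}$), applies a.e.-effectiveness twice to conclude that the inclusion $(\mathfrak{M}\cap F)\cup(\mathfrak{N}\cap F) \hookrightarrow F$ lies in $\mathcal{SF}$-Mono and hence is pure, and then uses local cyclicity to produce $z_P$ in the union with $z_P s \in (\mathfrak{M}\cap F)\setminus(\mathfrak{N}\cap F)$ and $z_P t\in(\mathfrak{N}\cap F)\setminus(\mathfrak{M}\cap F)$, contradicting closure under the action of whichever trace contains $z_P$. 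This two-submodel purity argument is the idea missing from your sketch.
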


As in the flat case of Theorem \ref{thm_Fmono_cofibgen}, we do not know how restrictive is the assumption that $\mathcal{SF}$-Mono is closed under compositions.  By \cite[Theorem 6.2(7)]{MR1899443}, $\mathcal{SF}$-Mono is the same as the class of ``P-unitary" monomorphisms with strongly flat quotient.  The assumption that $\mathcal{SF}$-Mono is closed under compositions in $\overline{\text{Act}}$-$S$ is equivalent to requiring that $\mathcal{SF}$ is closed under P-unitary Rees extensions.

An act monomorphism $f: A \to B$ is \textbf{unitary} if $B\setminus \text{im}(f)$ is closed under the action of $S$ on $B$.  It is proved in \cite[Theorem 3.5]{MR3249868} that the FCC holds in Act-$S$ if and only if 
\[
\mathcal{U}_{\mathcal{F}}:= \left\{ f: A \to B \ : \ f \text{ is unitary and } B \setminus \text{im}(f) \in \mathcal{F}  \right\}
\]
forms the left part of a \emph{Weak Factorization System}.  We are unable to characterize when this happens, but we can characterize exactly when $\mathcal{U}_{\mathcal{F}}$ satisfies the stronger property of being cofibrantly generated:
\begin{theorem}\label{thm_UnitaryFlat}
Let $S$ be a monoid and $\mathcal{X}$ a class of $S$-acts such that for any collection $(X_i)_{i \in I}$ of $S$-acts,
\[
\bigsqcup_{i \in I} X_i \in \mathcal{X} \ \iff \ \forall i \in I \ X_i \in \mathcal{X}.
\]
Then $\mathcal{U}_{\mathcal{X}}$ is cofibrantly generated in $\overline{\text{Act}}$-$S$ if, and only if, there is a bound on the cardinality of the indecomposable members of $\mathcal{X}$.
\end{theorem}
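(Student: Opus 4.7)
The plan exploits the rigid structure of unitary monomorphisms: a monomorphism $f:A \to B$ is unitary iff $B$ decomposes as an $S$-act coproduct $B = \mathrm{im}(f) \sqcup (B \setminus \mathrm{im}(f))$. Hence $\mathcal{U}_{\mathcal{X}}$ is precisely the class of coproduct inclusions whose complementary summand lies in $\mathcal{X}$. Two elementary facts drive the argument: (a) the pushout of $\emptyset \to X$ along $\emptyset \to A$ is the coproduct inclusion $A \to A \sqcup X$, so such pushouts simply adjoin disjoint copies of $X$; and (b) an indecomposable sub-$S$-act of a coproduct $\bigsqcup_i Z_i$ lies inside a single $Z_i$.

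For the ($\Leftarrow$) direction, fix $\kappa$ bounding the indecomposable members of $\mathcal{X}$ and take $I := \{\emptyset \to X : X \in \mathcal{X} \text{ indecomposable}, |X| \leq \kappa\}$, a set up to isomorphism contained in $\mathcal{U}_{\mathcal{X}}$. Given $f\in\mathcal{U}_{\mathcal{X}}$, decomposing $B \setminus \mathrm{im}(f)$ into indecomposable components (each in $\mathcal{X}$ by the coproduct-reflection half of the hypothesis, of cardinality $\leq \kappa$) and iteratively applying (a) realizes $f$ as a transfinite composition of pushouts of maps from $I$. Together with routine closure of $\mathcal{U}_{\mathcal{X}}$ under pushouts, transfinite compositions (using the coproduct-closure half of the hypothesis), and retracts, this yields $\mathcal{U}_{\mathcal{X}} = \mathrm{cof}(I)$. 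For the ($\Rightarrow$) direction, suppose $\mathcal{U}_{\mathcal{X}} = \mathrm{cof}(I)$ and let $\kappa$ exceed $|B_i|$ for every $i:A_i\to B_i$ in $I$. Any nonempty indecomposable $X \in \mathcal{X}$ gives $\emptyset \to X \in \mathcal{U}_{\mathcal{X}}$, hence is a retract of some cell complex $\emptyset \to Z$. Because each $i \in I$ lies in $\mathcal{U}_{\mathcal{X}}$, observation (a) shows each successor stage of the cell complex adjoins a disjoint subact of cardinality $<\kappa$, so $Z \cong \bigsqcup_\alpha C_\alpha$ with each $|C_\alpha| < \kappa$. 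The retract structure provides a monomorphism $X \hookrightarrow Z$, and (b) places its image in a single $C_\alpha$, forcing $|X| < \kappa$.

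The main technical obstacle is the verification that $\mathcal{U}_{\mathcal{X}}$ is closed under retracts in the ($\Leftarrow$) direction, since coproduct-closure of $\mathcal{X}$ does not transparently imply retract-closure of the complement class. Given $f:A\to B$ a retract of $g:P\to Q$ in $\mathcal{U}_{\mathcal{X}}$ via $\beta:B\to Q$, $\sigma:Q\to B$, a short diagram chase with unitarity of $g$ shows $f$ is a unitary monomorphism and that $\beta$ restricts to a mono $B \setminus f(A) \to Q \setminus g(P)$. To land $B \setminus f(A)$ in $\mathcal{X}$, I would split $Q \setminus g(P) = Q_0 \sqcup Q_1$ into the sub-$S$-acts $Q_k := \sigma^{-1}(B_k) \cap (Q \setminus g(P))$ for $B_0 := f(A)$ and $B_1 := B \setminus f(A)$, apply coproduct-reflection to place $Q_1 \in \mathcal{X}$, and exploit the fact that $\sigma$ restricts to a retraction of $Q_1$ onto $B \setminus f(A)$ together with (b) applied to the indecomposable decomposition of $Q_1$ to pull membership back to $B \setminus f(A)$ component by component.
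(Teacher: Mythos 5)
Your proof takes a genuinely different and more elementary route than the paper's. The paper derives both directions from its Theorem \ref{thm_NewChar} (``almost-everywhere effectiveness''), using elementary submodels $\mathfrak{N}\prec_*(V,\in)$: for necessity it shows that a large indecomposable $X\in\mathcal{X}$ would make $X\setminus(\mathfrak{N}\cap X)$ a nonempty complementary subact, and for sufficiency it verifies a.e.-effectiveness by decomposing the complement into components of size $<\kappa$, each entirely inside or outside $\mathfrak{N}$. Your argument replaces all of this with a direct cell-complex analysis, and your necessity direction is correct and arguably cleaner: every cell of a complex built from $\mathcal{U}_{\mathcal{X}}$-generators adjoins a disjoint summand of size $<\kappa$, so every connected component of the resulting $Z$ has size $<\kappa$, and the section $X\hookrightarrow Z$ lands in one component. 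Your sufficiency argument correctly establishes \emph{cellular} generation: the decomposition of $B\setminus\mathrm{im}(f)$ into indecomposable components, each in $\mathcal{X}$ by the reflection half of the hypothesis and of bounded size, together with closure of $\mathcal{U}_{\mathcal{X}}$ under pushouts and transfinite compositions, gives $\mathcal{U}_{\mathcal{X}}=\mathrm{cell}(I)$.

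The gap is exactly where you flagged it: the retract-closure argument. You correctly reduce to showing that each indecomposable component $D_k$ of $B\setminus f(A)$ lies in $\mathcal{X}$, and your diagram chase does exhibit $D_k$ as a retract of a single indecomposable component $C_{j(k)}$ of $Q_1\in\mathcal{X}$. But nothing in the hypothesis lets you conclude $D_k\in\mathcal{X}$ from this: the disjoint-union property says nothing about retracts, and indeed $\mathcal{U}_{\mathcal{X}}$ need \emph{not} be retract-closed under the stated hypotheses. For instance, over $S=(\mathbb{N},+)$ let $X_0=\{a,b\}$ with $a\cdot 1=b$, $b\cdot 1=b$, let $D=\{b\}$, and let $\mathcal{X}$ be the class of disjoint unions of copies of $X_0$; then $\mathcal{X}$ satisfies the displayed biconditional, $A\to A\sqcup D$ is a retract of $A\to A\sqcup X_0\in\mathcal{U}_{\mathcal{X}}$, but $D\notin\mathcal{X}$. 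So this step cannot be repaired without an extra hypothesis on $\mathcal{X}$. To be fair, the same example shows that the theorem's ``cofibrantly generated'' must be read as ``cellularly generated'' (the paper's own proof only establishes the latter, and its Remark \ref{rem_Cell_Cof_equiv} signals this convention; for the intended applications $\mathcal{X}=\mathcal{F},\mathcal{SF}$ retract-closure holds anyway). If you drop the retract-closure claim and state the conclusion as cellular generation, your argument is complete and matches the substance of the paper's result.
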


Both $\mathcal{F}$ and $\mathcal{SF}$ always satisfy the disjoint union property displayed in the statement of Theorem \ref{thm_UnitaryFlat}.  Hence:
\begin{corollary}
For any monoid $S$, $\mathcal{U}_{\mathcal{F}}$ is cofibrantly generated in $\overline{\text{Act}}$-$S$ if and only if there is a bound on the size of the indecomposable members of $\mathcal{F}$ (and similarly for $\mathcal{SF}$).
\end{corollary}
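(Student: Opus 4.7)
The plan is to apply Theorem \ref{thm_UnitaryFlat} twice, once with $\mathcal{X} = \mathcal{F}$ and once with $\mathcal{X} = \mathcal{SF}$. In each case the corollary follows as soon as one verifies the disjoint union hypothesis
\[
\bigsqcup_{i \in I} X_i \in \mathcal{X} \ \iff \ \forall i \in I \ X_i \in \mathcal{X},
\]
so the work reduces to checking this hypothesis for the two classes of interest.

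For $\mathcal{F}$, I would use the fact that $-\otimes_S-$ commutes with coproducts: for $X = \bigsqcup_{i \in I} X_i$ and any right $S$-act $B$, there is a natural isomorphism $B \otimes_S X \cong \bigsqcup_{i}(B \otimes_S X_i)$. Since coproducts in Act-$S$ are just set-theoretic disjoint unions, a coproduct of maps is injective iff each of its components is injective. Hence, for a monomorphism $B' \hookrightarrow B$, the induced map $B'\otimes_S X \to B \otimes_S X$ is a monomorphism iff each $B' \otimes_S X_i \to B \otimes_S X_i$ is, which yields the disjoint union property for $\mathcal{F}$ in both directions.

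For $\mathcal{SF}$, I would invoke the classical characterization of strong flatness by conditions (P) and (E). Each is a statement about a finite system of identities of the form $xs = yt$ with $s,t \in S$ and $x,y$ in the act. Any such system involves only finitely many elements of a single indecomposable component (if $xs=yt$ then $x$ and $y$ lie in the same component), so (P) and (E) hold in $\bigsqcup_{i} X_i$ iff they hold in each summand $X_i$. This delivers the disjoint union property for $\mathcal{SF}$.

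With both properties verified, the corollary is immediate from Theorem \ref{thm_UnitaryFlat}. There is no genuine obstacle beyond these standard componentwise facts; all of the substantive work lives in Theorem \ref{thm_UnitaryFlat} itself.
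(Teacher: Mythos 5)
Your proposal is correct and follows the same route as the paper: the paper derives the corollary from Theorem \ref{thm_UnitaryFlat} by simply asserting that $\mathcal{F}$ and $\mathcal{SF}$ satisfy the disjoint union property, which you verify explicitly (tensoring commutes with coproducts for $\mathcal{F}$; the componentwise nature of conditions (P) and (E) for $\mathcal{SF}$). Your verifications are standard and sound, so there is nothing substantive to add.
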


The proofs of all these results go through the key Theorem \ref{thm_NewChar} (page \pageref{thm_NewChar}), a new ``top-down" characterization of cofibrant generation of a class $\mathcal{M}$ of monomorphisms.  It  (arguably) simplifies the task of checking whether $\mathcal{M}$ is cofibrantly generated, by allowing one to completely avoid dealing with transfinite constructions.  Theorem \ref{thm_NewChar} says that cofibrant generation of $\mathcal{M}$ is equivalent to $\mathcal{M}$ being ``almost everywhere effective" (Definition \ref{def_EffectiveSubobj}).  Almost-everywhere effectiveness of $\mathcal{M}$ means that \textbf{if} $\mathfrak{N}$ is a sufficiently elementary submodel of the universe of sets that has access to enough parameters, and \textbf{if} $f: A \to B$ is a member of $\mathfrak{N} \cap \mathcal{M}$, \textbf{then} when we consider the particular diagram below (with lower left part a pushout), the maps $f \restriction \mathfrak{N}$ and $r^{f,\mathfrak{N}}$ must also lie in $\mathcal{M}$:

\[\begin{tikzcd}
	A &&&&& B \\
	\\
	&& {P^{f,\mathfrak{N}}} \\
	\\
	{\mathfrak{N} \cap A} &&&&& {\mathfrak{N} \cap B}
	\arrow["{f \ (\in \mathfrak{N} \cap \mathcal{M})}", hook, from=1-1, to=1-6]
	\arrow[curve={height=12pt}, from=1-1, to=3-3]
	\arrow["{r^{f,\mathfrak{N}}}"', from=3-3, to=1-6]
	\arrow["\ulcorner"{anchor=center, pos=0.125, rotate=-90}, draw=none, from=3-3, to=5-1]
	\arrow["\subseteq", hook, from=5-1, to=1-1]
	\arrow["{f \restriction \mathfrak{N}}"', hook, from=5-1, to=5-6]
	\arrow["\subseteq"', hook, from=5-6, to=1-6]
	\arrow[curve={height=-12pt}, from=5-6, to=3-3]
\end{tikzcd}\]

\noindent This is a strict weakening of a notion due to Barr and Borceux-Rosick\'y, see Definition \ref{def_MainDiag} and subsequent discussion for details.  

The top-down characterization of cofibrant generation in Theorem \ref{thm_NewChar} is also in the background of the main results of \cite{Cox_et_al_CofGenPureMon}, see Remark \ref{rem_bound_loc_cyc}.

\section{Monoid acts and covers} Our terminology agrees with the standard reference for monoid acts (Kilp et al.~\cite{MR1751666}).  A \textbf{monoid} is a nonempty set $S$ together with an associative binary operation that has an identity.  A nonempty subset $K$ of $S$ is a \textbf{left ideal} if $SK \subset K$.  A monoid $S$ is \textbf{right-reversible} if every pair of left ideals of $S$ has nonempty intersection.  For example, every commutative monoid is right-reversible.

Our official definition of a (right) \textbf{$\boldsymbol{S}$-act} is a \emph{non-empty} set $A$ together with a function $(a,s) \mapsto as$ from $A \times S$ to $A$ such that $(as)t = a(st)$ for all $a \in A$ and all $s,t \in S$.  A homomorphism $\varphi: A \to B$ of $S$-acts is a function such that $\varphi(as)=\varphi(a)s $ for all $a \in A$ and $s \in S$.  This category is called \textbf{Act-$\boldsymbol{S}$} in \cite{MR1751666} and appears to be the most commonly-used definition in the act literature.  Because Act-$S$ lacks an initial object, it will sometimes be convenient to work with the larger category $\overline{\textbf{Act}}$-$\boldsymbol{S}$ that includes the empty act (together with a unique morphism $\emptyset \to A$ for each $A \in \{ \emptyset \} \cup \text{Obj}(\text{Act-S})$).  Finally, if $S$ has a left zero $z$ (i.e., $zs=z$ for all $s \in S$) then Act$_0$-$S$ denotes those members of Act-S that have a unique fixed point.  These categories and others are discussed in \cite{MR1751666}, I.6.

If $A$ is a (possibly empty) subact of $B$, the \textbf{Rees congruence} on $B$ is given by $b_0 \rho_A b_1$ if either $b_0 = b_1$ or both $b_0$, $b_1$ are elements of $A$; $B/A$ denotes the quotient act of $B$ by this congruence.  Note that if $A=\emptyset$ then $B/A \simeq B$.  Tensor products of acts is defined similarly as for modules.  An act is \textbf{flat} if tensoring with it preserves monomorphisms.  An act is \textbf{strongly flat} if it is a directed colimit of finitely presented free acts.  Strong flatness is strictly stronger than flatness in Act-$S$ (though they agree in $R$-Mod), and strong flatness is equivalent, by Stenstr\"om~\cite{MR0296191} (using different terminology), to a conjunction of two combinatorial conditions known as conditions (P) and (E); see \cite{MR1751666}, III.9.  $\mathcal{F}_S$ and $\mathcal{SF}_S$ denote the classes of (nonempty) flat and strongly flat $S$-acts, respectively; we omit the $S$ subscript when it is clear from the context.  For a class $\mathcal{X}$ of $S$-acts and an act $A$, an \textbf{$\boldsymbol{\mathcal{X}}$-precover} is a morphism $\pi: X \to A$ with $X \in \mathcal{X}$, such whenever $X' \in \mathcal{X}$ and $\psi: X' \to A$ is a morphism, then $\psi$ factors through $\pi$ (not necessarily uniquely).  If, additionally, the only maps $j: X \to X$ such that $\pi_X  \circ j = \pi_X$ are automorphisms of $X$, then $\pi_X$ is called a $\mathcal{X}$\textbf{-cover}.  

\begin{lemma}[ \cite{MR3251751}, Theorem 4.11 ]\label{lem_PrecoverYieldsCover}
If every $S$-act has an $\mathcal{F}$-precover, then every act has an $\mathcal{F}$-cover, and similarly for $\mathcal{SF}$.  
\end{lemma}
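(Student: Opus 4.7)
My plan is to adapt Enochs's classical ``minimum cardinality'' argument for modules to the $S$-act setting. The argument exploits only that both $\mathcal{F}$ and $\mathcal{SF}$ are closed under retracts in $\overline{\text{Act}}$-$S$, so the two cases can be handled in parallel.

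First, given any $S$-act $A$, the hypothesis furnishes at least one $\mathcal{F}$-precover $\pi_0 : X_0 \to A$, so the class of cardinals
\[
\{|X| : \pi : X \to A \text{ is an } \mathcal{F}\text{-precover}\}
\]
is nonempty. Fix $\pi : X \to A$ attaining the minimum. I then claim $\pi$ is an $\mathcal{F}$-cover: given $j \in \mathrm{End}(X)$ with $\pi \circ j = \pi$, I must show $j$ is an automorphism. Factor $j$ as $X \twoheadrightarrow X/{\sim_j} \xrightarrow{\bar j} j(X) \hookrightarrow X$, where $\sim_j$ is the kernel congruence of $j$. Using the precover property, one observes that $\pi \restriction j(X) : j(X) \to A$ is again an $\mathcal{F}$-precover whenever $j(X) \in \mathcal{F}$ (any $\psi : F \to A$ with $F \in \mathcal{F}$ factors through $\pi$ as $\pi \circ h$, and then $j \circ h$ witnesses a factorization through $\pi \restriction j(X)$); a dual argument applies to $X/{\sim_j}$. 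Minimality of $|X|$ then forces $|j(X)| = |X/{\sim_j}| = |X|$ and, with a little further combinatorial analysis of $S$-acts, $j(X) = X$ and $\sim_j$ is trivial, so $j$ is an automorphism.

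The main obstacle is ensuring that $j(X)$ (respectively $X/{\sim_j}$) genuinely lies in $\mathcal{F}$, because neither subacts nor quotients of an act in $\mathcal{F}$ need be in $\mathcal{F}$ in general. I would handle this by promoting $j$ to an idempotent endomorphism $e$ in the monoid $E_\pi := \{k \in \mathrm{End}(X) : \pi \circ k = \pi\}$, obtained by stabilizing the descending chain $j(X) \supseteq j^2(X) \supseteq \cdots$ once cardinality bounds (applied inside the finite-generation structure of $E_\pi$) force it to level off. Then $e(X)$ becomes an honest retract of $X$ and hence lies in $\mathcal{F}$ by closure of $\mathcal{F}$ under retracts; minimality of $|X|$ then gives $e(X) = X$, and the analogous argument handles the quotient side. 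The $\mathcal{SF}$ case is verbatim, since $\mathcal{SF}$ is also closed under retracts in $\overline{\text{Act}}$-$S$.
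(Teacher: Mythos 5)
The statement you are quoting is Theorem~4.11 of Bailey--Renshaw, and the proof there --- like Enochs's original argument for modules --- runs a transfinite chain of precovers and uses closure of $\mathcal{F}$ (resp.\ $\mathcal{SF}$) under \emph{directed colimits} to stabilize it. That is exactly the hypothesis the present paper leans on when it invokes the lemma inside the proof of Lemma~\ref{lem_CellGenExpandedGivesCovers}. Your proposal never uses directed-colimit closure at all; it relies only on retract-closure plus a minimum-cardinality choice, and that cannot suffice. If it did, the identical argument would show that every $S$-act has a projective cover (projective acts are retract-closed, and the free act on the underlying set of $A$ is always a projective precover of $A$), which fails for non-perfect monoids --- just as, over $\mathbb{Z}$, every abelian group has a free precover of minimal cardinality, yet $\mathbb{Z}/2$ has no projective cover.

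The concrete failure point is the step ``minimality of $|X|$ forces $j(X)=X$ and $\sim_j$ trivial.'' For infinite $X$, knowing $|j(X)|=|X|$ gives no purchase: $j(X)$ can be a proper subact of the same cardinality, and no further combinatorial analysis can rule this out, because the conclusion is simply false for retract-closed classes in general. The idempotent-promotion step is likewise unsupported: the chain $j(X)\supseteq j^2(X)\supseteq\cdots$ need not stabilize at any finite stage, its intersection need not be the image of an idempotent of $E_\pi$, and manufacturing idempotents in $E_\pi$ is precisely the hard part of cover theory --- in the classical proof they are extracted from a stabilized transfinite tower, not from cardinality bounds. To repair the argument, follow the Enochs/Xu scheme used by Bailey--Renshaw: iterate the precover construction transfinitely, pass to directed colimits at limit stages (remaining in $\mathcal{F}$ or $\mathcal{SF}$ by colimit closure), and show by a cardinality argument that the tower of factorization maps eventually stabilizes, producing a precover on which every $j$ with $\pi\circ j=\pi$ is an automorphism.
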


The definitions of cover and precover above are those of Enochs.  A different definition of ``$\mathcal{X}$-cover" was introduced by Mahmoudi and Renshaw, which was equivalent to Enochs' definition in the case $\mathcal{X}$=projectives, but not in general.  See \cite{MR3547004} for a comparison of the two notions.

\section{A new characterization of cellular generation}

\subsection{Cofibrant and cellular generation} 
For morphisms $f$, $g$ in any of the categories discussed above, we say $\boldsymbol{g}$ \textbf{is a pushout of} $\boldsymbol{f}$ if there exists a pushout square of the following form:
\[\begin{tikzcd}
	\bullet & \bullet \\
	\bullet & \bullet
	\arrow["f", from=1-1, to=1-2]
	\arrow[from=1-1, to=2-1]
	\arrow[from=1-2, to=2-2]
	\arrow["g"', from=2-1, to=2-2]
	\arrow["\ulcorner"{anchor=center, pos=0.125, rotate=180}, draw=none, from=2-2, to=1-1]
\end{tikzcd}\]

A class $\mathcal{M}$ of morphisms is \textbf{closed under pushouts} if pushouts of members of $\mathcal{M}$ also lie in $\mathcal{M}$.  A commutative system $\vec{f} = \langle f_{\alpha,\beta}: A_\alpha \to A_\beta \ : \ \alpha \le \beta \le \mu \rangle$ of monoid act morphisms indexed by an ordinal $\mu$ is a \textbf{transfinite composition} if $f_{\alpha,\alpha} = \text{id}_{A_\alpha}$ for each $\alpha$ and, for each limit ordinal $\gamma \le \mu$, $\langle f_{\alpha,\gamma}: A_\alpha \to A_\gamma \ : \ \alpha < \gamma \rangle$ is the colimit of the system $\vec{f} \restriction \gamma$.  ``$f$ is a transfinite composition" is often shorthand for saying there is a transfinite composition $\vec{f}$ with $f =f_{0,\text{lh}(\vec{f})}$.  For a class $\mathcal{M}$ of act morphisms, a \textbf{transfinite composition of members of $\boldsymbol{\mathcal{M}}$} is a transfinite composition $\vec{f}$ such that for each $\alpha < \mu$, the morphism $f_{\alpha,\alpha+1}$ is a member of $\mathcal{M}$.  $\textbf{cell}\boldsymbol{\left( \mathcal{M} \right)}$ denotes the closure of $\mathcal{M}$ under pushouts and transfinite compositions; this is the same as all transfinite compositions of pushouts of members of $\mathcal{M}$ (see \cite{MR3188863}). We will say that $\mathcal{M}$ is \textbf{cellularly generated} if $\mathcal{M}= \text{cell}(\mathcal{M}_0)$ for some \textbf{set} $\mathcal{M}_0$ of morphisms.  $\mathcal{M}$ is \textbf{cofibrantly generated} if it is the closure of a \textbf{set} $\mathcal{M}_0$ under pushouts, transfinite compositions, and retracts. 

\begin{fact}[\cite{MR3188863}]\label{fact_CharCofClos}
In locally presentable categories, if $\mathcal{M}$ is a class of morphisms then
\[
\text{cell} (\mathcal{M}) = \text{Tc} \ \text{Po} \ (\mathcal{M}),
\]
where Po denotes closure under pushouts and Tc denotes closure under transfinite compositions.  The cofibrant closure of $\mathcal{M}$ (smallest class containing $\mathcal{M}$ closed under pushouts, transfinite compositions, and retracts) is equal to
\[
\text{Rt} \ \text{cell} (\mathcal{M})
\]
where Rt denotes closure under retracts.
\end{fact}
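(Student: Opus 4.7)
The proof proposal: the fact consists of two separate equalities, and for each, one containment is immediate. For the first equality, $\text{Tc}\,\text{Po}(\mathcal{M}) \subseteq \text{cell}(\mathcal{M})$ is automatic because $\text{cell}(\mathcal{M})$ is by definition closed under both pushouts and transfinite compositions. The real content is the reverse inclusion, and that reduces to checking that $\text{Tc}\,\text{Po}(\mathcal{M})$ is already closed under pushouts and transfinite compositions. Then the smallest pushout- and transfinite-composition-closed class containing $\mathcal{M}$, namely $\text{cell}(\mathcal{M})$, must be contained in it.

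Closure of $\text{Tc}\,\text{Po}(\mathcal{M})$ under transfinite compositions is straightforward: a transfinite composition $\vec{h}$ whose successor maps are each themselves transfinite compositions $\vec{g}_\alpha$ of pushouts of $\mathcal{M}$ can be reindexed as a single transfinite composition of pushouts by concatenating the ordinal lengths $\text{lh}(\vec{g}_\alpha)$ in order (using that the category has the colimits needed to splice chains together, which is guaranteed by local presentability). Closure under pushouts is the more interesting step: given a transfinite composition $\vec{f} = \langle f_{\alpha,\beta} : A_\alpha \to A_\beta \rangle$ of pushouts of members of $\mathcal{M}$ and a morphism $g : A_0 \to X_0$, the plan is to push out $\vec{f}$ along $g$ pointwise to build a parallel system $\vec{f}^g$ with $X_0$ at the bottom. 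Each successor map $(f^g)_{\alpha,\alpha+1}$ is then a pushout of $f_{\alpha,\alpha+1}$, so it remains a pushout of a member of $\mathcal{M}$ because pushouts compose horizontally. At limit stages $\gamma$, one uses that pushout functors preserve colimits (again from local presentability, since the pushout is a left adjoint in the relevant slice direction), so the system $\vec{f}^g$ is still a transfinite composition. The colimit $X_0 \to X_{\text{lh}(\vec{f})}$ is then the pushout of $f_{0,\text{lh}(\vec{f})}$ along $g$.

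For the second equality, $\text{Rt}\,\text{cell}(\mathcal{M})$ is obviously contained in the cofibrant closure. For the converse, it suffices to show that $\text{Rt}\,\text{cell}(\mathcal{M})$ is closed under pushouts and transfinite compositions. Pushouts are easy by a diagram chase: a pushout of a retract of a cellular map is canonically a retract of the pushout, and the pushout of a cellular map is cellular by the first equality. The transfinite composition case is the main obstacle: given $\vec{f}$ whose successor maps $f_{\alpha,\alpha+1}$ are retracts of cellular maps $g_\alpha$, one must exhibit the colimit morphism $f_{0,\text{lh}(\vec{f})}$ as a retract of a cellular map. The plan is to build, by transfinite recursion, a parallel cellular tower $\vec{h}$ together with sections $A_\alpha \to B_\alpha$ and retractions $B_\alpha \to A_\alpha$ compatible with the structure maps, gluing the pushout of $g_\alpha$ along the previously constructed retraction at each successor step and passing to colimits at limits; this makes $\vec{f}$ a retract of $\vec{h}$ in the arrow category, so $f_{0,\text{lh}(\vec{f})}$ is a retract of a cellular map.

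I expect the recursion in the last step to be the most delicate part, because at limit stages one needs the compatibility of sections and retractions to survive colimit passage, which in a general locally presentable category requires some care; this is precisely where the hypothesis is used. A slicker alternative, when $\mathcal{M} = \mathcal{M}_0$ is a set, is to invoke the small object argument: $\text{Rt}\,\text{cell}(\mathcal{M}_0)$ coincides with the class of maps with the left lifting property against $\mathcal{M}_0$-injectives (a standard consequence of the retract argument), and this lifting-property class is automatically closed under pushouts, transfinite compositions, and retracts, which short-circuits the delicate recursion. For arbitrary $\mathcal{M}$, however, one reduces to the set-indexed case by writing a given morphism in the cofibrant closure as involving only set-many instances of members of $\mathcal{M}$, which is where I would want to cite the detailed machinery of \cite{MR3188863} rather than reprove it here.
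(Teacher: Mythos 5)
The paper gives no proof of this statement: it is recorded as a \emph{Fact} and attributed entirely to \cite{MR3188863}, so there is no internal argument to compare yours against. Judged on its own, your proof is essentially correct and is the standard one. For the first equality, showing that $\text{Tc}\,\text{Po}(\mathcal{M})$ is itself closed under pushouts and transfinite compositions is exactly the right reduction; the pointwise pushout of a tower works because pushout along $A_0 \to X_0$ is a left adjoint from $A_0/\mathcal{K}$ to $X_0/\mathcal{K}$ (so limit stages survive) and because the pasting/cancellation lemma makes each successor square of the pushed-out tower a pushout of the original cell, hence still a pushout of a member of $\mathcal{M}$. For the second equality, the transfinite recursion you describe does go through, including at limit ordinals: taking $B_0=A_0$, defining $B_{\alpha+1}$ as the pushout of $g_\alpha$ along $C_\alpha \to A_\alpha \to B_\alpha$, and checking that the induced sections $A_{\alpha+1}\to D_\alpha\to B_{\alpha+1}$ and retractions commute with the tower maps is a finite diagram chase at each successor step, and the compatibility then passes to colimits automatically since sections and retractions are themselves colimits of compatible cocones. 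Notably, this direct recursion works for an arbitrary \emph{class} $\mathcal{M}$, so your closing paragraph is the one weak spot: the proposed ``reduction to the set-indexed case'' via the small object argument is neither needed nor correct as stated (for a proper class one cannot run the small object argument, and writing a cofibrantly-closed morphism as ``involving only set-many instances'' is not a reduction one gets for free). Since your primary argument already covers the general case, you should simply drop that alternative rather than lean on it.
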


\begin{remark}\label{rem_Cell_Cof_equiv}
Since our main Theorem \ref{thm_NewChar} works in the more general situation of non-retract-closed classes, and since there are important examples of non-retract-closed cellularly generated classes (e.g., monomorphisms with free Rees quotient), we speak mainly of cellular generation from now on.
\end{remark}

Cellular generation allows the use of Quillen's \emph{Small Object Argument} originating in homotopy theory.  The following argument, tracing back at least to Rosick\'y~\cite{MR1925005}, explains how cofibrant generation of $\mathcal{X}$-Mono (or $\mathcal{X}$-PureMono) can sometimes yield $\mathcal{X}$-covers.

\begin{lemma}\label{lem_CellGenExpandedGivesCovers}
Suppose $\mathcal{X}$ is a class of (nonempty) $S$-acts closed under disjoint unions and directed colimits, such that for every $A \in$ Act-$S$ there is an $X \in \mathcal{X}$ with $\text{Hom}(X,A) \ne \emptyset$.  Suppose $\mathcal{M}$ is either the class of all monomorphisms, or the class of all pure monomorphisms, in $\overline{\text{Act}}$-$S$.  Let
\[
\mathcal{X}\text{-}\mathcal{M}:= \left\{ f:A \to B \ : \ f \in \mathcal{M} \text{ and } B/\text{im}(f) \in \mathcal{X} \right\}
\]

If $\mathcal{X}$-$\mathcal{M}$ is cellularly generated in $\overline{\text{Act}}$-$S$, then every member of Act-$S$ has an $\mathcal{X}$-cover.
\end{lemma}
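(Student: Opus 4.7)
The plan is to apply Quillen's Small Object Argument (SOA) to the initial map $\emptyset \to A$ in $\overline{\text{Act}}\text{-}S$, combine the resulting factorization with the Rees-quotient condition defining $\mathcal{X}\text{-}\mathcal{M}$ to produce an $\mathcal{X}$-precover of $A$, and then invoke the Enochs-style precover-to-cover argument behind Lemma \ref{lem_PrecoverYieldsCover}.

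Fix a set $\mathcal{M}_0 \subseteq \mathcal{X}\text{-}\mathcal{M}$ with $\mathrm{cell}(\mathcal{M}_0) = \mathcal{X}\text{-}\mathcal{M}$. Because $\overline{\text{Act}}\text{-}S$ is locally presentable, the SOA applied to the unique morphism $\emptyset \to A$ produces a factorization $\emptyset \xrightarrow{i_A} X_A \xrightarrow{\pi_A} A$ in which $i_A \in \mathrm{cell}(\mathcal{M}_0) = \mathcal{X}\text{-}\mathcal{M}$ and $\pi_A$ has the right lifting property against $\mathcal{M}_0$. Standard closure of right-lifting classes under pushouts and transfinite compositions upgrades this to right lifting of $\pi_A$ against all of $\mathcal{X}\text{-}\mathcal{M}$. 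Because $i_A \in \mathcal{X}\text{-}\mathcal{M}$, its Rees quotient $X_A / \mathrm{im}(i_A) = X_A / \emptyset \cong X_A$ lies in $\mathcal{X}$; in particular $X_A$ is nonempty and $X_A \in \mathcal{X}$.

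To see that $\pi_A : X_A \to A$ is an $\mathcal{X}$-precover, let $X' \in \mathcal{X}$ and $\psi : X' \to A$ be arbitrary. The initial morphism $\emptyset \to X'$ is itself in $\mathcal{X}\text{-}\mathcal{M}$ (its Rees quotient is $X' \in \mathcal{X}$, and an initial map is vacuously a pure mono), so the commutative square with top edge $i_A$, left edge $\emptyset \to X'$, bottom edge $\psi$, and right edge $\pi_A$ admits a diagonal filler $X' \to X_A$, realizing $\psi$ as a factorization through $\pi_A$. Finally, closure of $\mathcal{X}$ under directed colimits is all that the classical precover-to-cover argument cited in Lemma \ref{lem_PrecoverYieldsCover} actually uses, so it applies verbatim with $\mathcal{X}$ in place of $\mathcal{F}$ or $\mathcal{SF}$ and converts $\pi_A$ into an $\mathcal{X}$-cover of $A$.

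The step requiring the most attention is this last one: although each ingredient above (SOA, the Rees-quotient identification $X_A \in \mathcal{X}$, the diagonal filler exhibiting the precover property) is essentially bookkeeping, one must verify that the argument used for $\mathcal{F}$ and $\mathcal{SF}$ in Lemma \ref{lem_PrecoverYieldsCover} uses no class-specific content beyond the closure hypotheses on $\mathcal{X}$ (directed colimits and disjoint unions). Once this check is done, the whole proof reduces to a clean application of Quillen's SOA in the locally presentable category $\overline{\text{Act}}\text{-}S$.
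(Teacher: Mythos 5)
Your proposal is correct and follows the same overall architecture as the paper's proof: apply the Small Object Argument to $\emptyset \to A$, let the left factor supply the covering object, use the right lifting property of the right factor to get the precover property, and finish with Lemma \ref{lem_PrecoverYieldsCover}. The one genuine difference is how $X_A \in \mathcal{X}$ is established. You invoke the defining equality $\text{cell}(\mathcal{M}_0) = \mathcal{X}\text{-}\mathcal{M}$ directly, so that $i_A \in \mathcal{X}\text{-}\mathcal{M}$ and hence $X_A \cong X_A/\emptyset \in \mathcal{X}$ in one line; the paper instead re-derives this by a transfinite induction along the cellular presentation of $i_A$, using closure of $\mathcal{X}$ under disjoint unions (for pushouts of generators of the form $\emptyset \to X$), closure under directed colimits (at limit stages), preservation of Rees quotients by pushouts of nonempty acts, and closure of $\mathcal{X}\text{-}\mathcal{M}$ under two-step composition. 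Your shortcut is legitimate and in fact shows that the disjoint-union hypothesis on $\mathcal{X}$ is not needed for the precover step once the full equality $\text{cell}(\mathcal{M}_0) = \mathcal{X}\text{-}\mathcal{M}$ is granted. Two small caveats. First, $\text{cell}(\mathcal{M}_0)$ always contains $\text{id}_\emptyset$, whose Rees quotient is empty and hence not in $\mathcal{X}$, so the equality must be read modulo such degenerate cells; your inference then breaks down exactly in the case $X_A = \emptyset$, and you should rule that case out the way the paper does --- via the hypothesis that some $X \in \mathcal{X}$ maps to $A$ together with the lifting property of $\pi_A$ against $\emptyset \to X$ --- rather than deriving nonemptiness as a consequence of $X_A \in \mathcal{X}$. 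Second, $\emptyset \to X'$ is not literally pure under the equational definition (a parameter-free system solvable in $X'$ need not be solvable in $\emptyset$), but the paper adopts the same convention that such maps belong to $\mathcal{X}\text{-}\mathcal{M}$, so this matches the intended reading.
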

\begin{proof}
Note that $\mathcal{M}$ itself is cellularly (in fact cofibrantly) closed in $\overline{\text{Act}}$-$S$.  And if $X \in \mathcal{X}$ then $\emptyset \to X$ is a member of $\mathcal{X}$-$\mathcal{M}$.  Since $\mathcal{X}$-$\mathcal{M}$ is cellularly generated in $\overline{\text{Act}}$-$S$, Quillen's Small Object Argument implies, in particular, that for any nonempty act $A$, the morphism $\emptyset \to A$ factors as 
\[\begin{tikzcd}
	\emptyset &&&& A \\
	&& Q
	\arrow[from=1-1, to=1-5]
	\arrow["f"', from=1-1, to=2-3]
	\arrow["g"', from=2-3, to=1-5]
\end{tikzcd}\]
such that in the category $\overline{\text{Act}}$-$S$:
\begin{itemize}
 \item $f$ is a transfinite composition of pushouts of (the generating set of) members of $\mathcal{X}$-$\mathcal{M}$;
 \item $g$ has the right lifting property in against all members of $\mathcal{X}$-$\mathcal{M}$.
\end{itemize}

We claim that $Q \in \mathcal{X}$ and that $g$ is a $\mathcal{X}$-precover of $A$.  First we prove $Q \ne \emptyset$.  By assumption, there is some $X \in \mathcal{X}$ with $\text{Hom}(X,A)$ nonempty.  Then $\emptyset \to X$ is a member of $\mathcal{X}$-$\mathcal{M}$, and so by the right lifting property of $g$, there is a dashed map as below making the diagram commute:
\[\begin{tikzcd}
	Q & A & {} \\
	\emptyset & X
	\arrow["g", from=1-1, to=1-2]
	\arrow[from=2-1, to=1-1]
	\arrow[from=2-1, to=2-2]
	\arrow[dashed, from=2-2, to=1-1]
	\arrow["\pi"', from=2-2, to=1-2]
\end{tikzcd}\]
Since $X \ne \emptyset$ (because $\mathcal{X}$ consists of nonempty acts by assumption), this implies $Q \ne \emptyset$.  A similar argument shows that, if we can show that $Q \in \mathcal{X}$, it will follow that $g$ is an $\mathcal{X}$-precover of $A$ in Act-$S$.

So it remains to prove that $Q \in \mathcal{X}$.  Now $f$ is a transfinite composition $f = f_{0,\mu}: Q_0=\emptyset \to Q=Q_\mu$, where for every $\alpha < \mu$, $f_{\alpha,\alpha+1}: Q_\alpha \to Q_{\alpha+1}$ is a pushout of \emph{either} a monomorphism in $\mathcal{M}$ of the form $\emptyset \to X$ for some $X \in \mathcal{X}$, \emph{or} a monomorphism in $\mathcal{M}$ of the form $\emptyset \ne B \to C$ with $C/B \in \mathcal{X}$.  Since $Q \ne \emptyset$, we can without loss of generality assume $Q_1 \ne \emptyset$; hence $\emptyset=Q_0 \to Q_1$ must have been a trivial pushout of the former kind, with $Q_1/\emptyset = Q_1 \in \mathcal{X}$.  We prove by induction on $\alpha$ that each $Q_\alpha \in \mathcal{X}$ for $\alpha \ge 1$.  The base case $\alpha = 1$ was just shown, and for limit $\alpha$ we use the assumption that $\mathcal{X}$ is closed under directed colimits and that $\mathcal{M}$ is cellularly closed.  So it remains to show that if $Q_\alpha \in \mathcal{X}$ then so is $Q_{\alpha+1}$.   If $Q_\alpha \to Q_{\alpha+1}$ is a pushout of some $\emptyset \to X$ with $X \in \mathcal{X}$, then $Q_{\alpha+1} \simeq Q_\alpha \sqcup X$, which is a member of $\mathcal{X}$ by closure under disjoint unions.  If $Q_\alpha \to Q_{\alpha+1}$ is a pushout of some $\emptyset \ne B \to C$ in $\mathcal{M}$ with $C/B \in \mathcal{X}$, then since $\mathcal{M}$ is closed under pushouts, and pushouts preserve Rees quotients of nonempty acts (\cite[Theorem 3.11 part 1]{MR3249868}), $Q_\alpha \to Q_{\alpha+1}$ is in $\mathcal{X}$-$\mathcal{M}$.  And since $Q_\alpha \in \mathcal{X}$, $\emptyset \to Q_\alpha$ is also in $\mathcal{X}$-$\mathcal{M}$.  Since $\mathcal{X}$-$\mathcal{M}$ is assumed to be cellularly closed in $\overline{\text{Act}}$-$S$, then in particular it is closed under 2-step composition, so $\emptyset \to Q_{\alpha+1}$ is in $\mathcal{X}$-$\mathcal{M}$; so $Q_{\alpha+1}/\emptyset \simeq Q_{\alpha+1} \in \mathcal{X}$.

We have shown every act in Act-$S$ has a $\mathcal{X}$-precover. Since $\mathcal{X}$ is closed under directed colimits, Lemma \ref{lem_PrecoverYieldsCover} implies every act has a $\mathcal{X}$-cover.
\end{proof}

\begin{corollary}
If either $\mathcal{F}$-Mono or $\mathcal{F}$-PureMono is cellularly generated in $\overline{\text{Act}}$-$S$, then every nonempty act has a $\mathcal{F}$-cover.  The same is true if we replace $\mathcal{F}$ with $\mathcal{SF}$.
\end{corollary}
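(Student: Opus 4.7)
The plan is to apply Lemma \ref{lem_CellGenExpandedGivesCovers} directly, once with $\mathcal{X} = \mathcal{F}$ and once with $\mathcal{X} = \mathcal{SF}$, taking $\mathcal{M}$ to be either all monomorphisms or all pure monomorphisms in $\overline{\text{Act}}$-$S$, as dictated by whichever cellular generation hypothesis is assumed. The work reduces entirely to checking the three standing hypotheses on $\mathcal{X}$ in that lemma.

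First I would verify closure under disjoint unions. For flat acts this is standard: since tensor product distributes over disjoint union and a coproduct of monomorphisms is a monomorphism, $\bigsqcup_i X_i$ is flat whenever each $X_i$ is. For strongly flat acts, I would invoke the equivalent characterization via conditions (P) and (E) from \cite{MR1751666}, III.9, both of which are trivially checked element-by-element on coproducts; alternatively, one can observe that a disjoint union of directed colimits of finitely presented free acts is again such a directed colimit.

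Next I would check closure under directed colimits. For $\mathcal{SF}$ this is immediate from the definition (a directed colimit of directed colimits of finitely presented free acts is again one). For $\mathcal{F}$ it follows from the fact that the tensor product functor preserves directed colimits, so a directed colimit of acts whose tensoring preserves monomorphisms again has this property. Finally, for the hypothesis that every $A \in \text{Act-}S$ admits a morphism from some $X \in \mathcal{X}$, I would just take $X = S$ (the free act on one generator), which is strongly flat hence flat, and note that any element $a \in A$ yields a morphism $S \to A$ sending $s \mapsto as$; since $S$-acts are nonempty by convention, such an $a$ exists.

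With all hypotheses verified, Lemma \ref{lem_CellGenExpandedGivesCovers} applies and delivers $\mathcal{F}$-covers (respectively $\mathcal{SF}$-covers) for every act in Act-$S$. There is no real obstacle; the corollary is essentially a specialization of the lemma to the two classes of interest, and the only content is confirming that $\mathcal{F}$ and $\mathcal{SF}$ satisfy the general-purpose hypotheses on $\mathcal{X}$.
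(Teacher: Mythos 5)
Your proposal is correct and is exactly the intended argument: the corollary is stated in the paper as an immediate consequence of Lemma \ref{lem_CellGenExpandedGivesCovers}, applied with $\mathcal{X}\in\{\mathcal{F},\mathcal{SF}\}$ and $\mathcal{M}$ the monomorphisms or pure monomorphisms. Your verification of the three hypotheses (closure of $\mathcal{F}$ and $\mathcal{SF}$ under disjoint unions and directed colimits, and the existence of a morphism from the strongly flat act $S$ into any nonempty act) is exactly what is needed and is carried out correctly.
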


\subsection{Set theory preliminaries}

We will use set-theoretic methods to prove the main results discussed in the introduction.  We work in the Zermelo Fraenkel axioms with Choice (ZFC).  $V$ denotes the universe of sets and $\in$ denotes the membership relation.   We refer the reader to Jech~\cite{MR1940513} for an overview of the \emph{satisfaction relation} $\models$ and \emph{elementary substructure relation} $\prec$ from first order logic, and how they are often used specifically in the context of set theory.

 Given a regular uncountable cardinal $\kappa$ and a set $X$ of size $<\kappa$, we would often \emph{like} to find an elementary submodel of the universe $(V,\in)$ of size $<\kappa$ that contains $X$ as a subset; i.e., we would like to have access to a set $N$ of cardinality $<\kappa$ such that
\[
X \subset N \text{ and } \mathfrak{N}:= (N,\in) \prec (V,\in),
\]
where $(N,\in)$ denotes the structure with universe $N$ and predicate 
\[
\in \cap (N \times N) = \{ (a,b) \ : \ a,b \in N \text{ and } a \in b \}.
\]

Due to constraints imposed by G\"{o}del's Second Incompleteness Theorem, we cannot literally do this.\footnote{Though we could, if one is willing to assume a very mild large cardinal axiom, such as ``ORD is Mahlo".}  However, this issue can almost always be circumvented in practice, and various methods are discussed extensively in various excellent articles on the use of set-theoretic elementary submodels (Dow~\cite{MR1031969}, Geschke~\cite{MR1950041}, Just-Weese~\cite{MR1474727}, Soukup~\cite{MR2800978}).  These technical issues also come up in the theory of forcing (Chapter VII of Kunen~\cite{MR597342} has a good summary).  

In order to  prevent these technical issues from obscuring the main arguments of the paper, and to avoid having to repeatedly perform a tedious analysis of formula complexity in the language of set theory,\footnote{The language whose first order signature has a single binary predicate symbol $\dot{\in}$.}
 we choose to essentially mimic Kunen's~\cite{MR597342} approach.  For a (meta-mathematical) natural number $n$, we consider the Levy class $\Sigma_n$ of formulas in the language of set theory.  The definition of the hierarchy isn't needed here, just that
\begin{enumerate*}
 \item every formula is a $\Sigma_n$ formula for some $n$;
 \item if $m \le n$, every $\Sigma_m$ formula is also $\Sigma_n$; and
 \item for fixed $n$, the $\Sigma_n$ satisfaction relation is first-order definable (see Kanamori~\cite{MR1994835}, chapter 0); and
 \item For fixed $n$, there is a closed unbounded class $C_n$ of ordinals $\alpha$ such that $(V_\alpha,\in) \prec_{n} (V,\in)$ for every $\alpha \in C_n$, where $\prec_n$ denotes elementarity with respect to all $\Sigma_n$ formulas in the language of set theory.
\end{enumerate*}  
The latter fact is due to the first-order expressibility of $\models_{n}$ and the Levy-Montague Reflection Principle.  We note that in general, the classes $C_n$ are \textbf{not} uniformly defined across $n$.

In particular, for any fixed $n$, any regular uncountable cardinal $\kappa$, and any set $X$ of cardinality $<\kappa$, there is an $\alpha \in C_n$ with $X \subset V_\alpha$ and $(V_\alpha,\in) \prec_n (V,\in)$.  Then one can use the ordinary Downward L\"owenheim-Skolem Theorem on the set-sized structure $(V_\alpha,\in)$ to find an $N$ with $|N|<\kappa$, $X \subset N$, and $(N,\in) \prec (V_\alpha,\in)$ (and $N \cap \kappa \in \kappa$, which will be convenient).  It follows that $(N,\in) \prec_n (V,\in)$.    Hence there is a scheme:

\begin{fact}[Class $\Sigma_n$ L\"owenheim-Skolem Theorem]\label{fact_LS_class}
For any (meta-mathematical) natural number $n$ and any regular uncountable cardinal $\kappa$:
for every set $X$ of size $<\kappa$, there exists some $N$ of size $<\kappa$ such that $X \subseteq N$, $N \cap \kappa \in \kappa$, and
\[
(N,\in) \prec_n (V,\in),
\]
which means that for every $\Sigma_n$ formula $\varphi(v_0, \dots, v_k)$ and every $a_0,\dots, a_k$ in $N$, 
\[
(N,\in) \models \varphi[a_0,\dots,a_k] \text{ if and only if } (V,\in) \models \varphi[a_0,\dots,a_k].
\]
\end{fact}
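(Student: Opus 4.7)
The plan is to derive the statement as a ZFC scheme (one theorem per meta-mathematical $n$) by chaining the Levy-Montague Reflection Principle with the ordinary set-sized Downward L\"owenheim-Skolem Theorem, essentially along the lines sketched in the two paragraphs preceding the statement. So I would fix, in the metatheory, a natural number $n$; then work in ZFC with an arbitrary regular uncountable cardinal $\kappa$ and an arbitrary set $X$ with $|X|<\kappa$.

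First I would produce a set-sized $\Sigma_n$-elementary reflection of $V$. The $\Sigma_n$ satisfaction relation $\models_n$ is first-order definable (Kanamori~\cite{MR1994835}, Chapter 0), so the Levy-Montague Reflection Principle applied to this single definable predicate yields that the class $C_n := \{\alpha : (V_\alpha,\in) \prec_n (V,\in)\}$ is definable and closed unbounded in the ordinals. Since $X$ and $\kappa$ are sets, pick $\alpha \in C_n$ with $\alpha > \kappa$ and $X \subseteq V_\alpha$. The structure $(V_\alpha,\in)$ is now a genuine set, so no metatheoretic issues remain. I would then, inside ZFC, apply the ordinary Downward L\"owenheim-Skolem Theorem to $(V_\alpha,\in)$ with initial seed $X \cup \kappa_0$ for some ordinal $\kappa_0$ with $|X| \le \kappa_0 < \kappa$, producing $(N,\in) \prec (V_\alpha,\in)$ with $X \subseteq N$ and $|N|<\kappa$. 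The clause $N \cap \kappa \in \kappa$ is arranged by standard bookkeeping: iterate the Skolem closure $\omega$ times, at each stage throwing $\sup(N_k \cap \kappa)$ into the seed of the next stage. Regularity of $\kappa$ together with $|N_k|<\kappa$ forces $\sup(N \cap \kappa) < \kappa$, and elementarity of the resulting $N$ in $V_\alpha$ then forces $N \cap \kappa$ to be a proper initial segment of $\kappa$.

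Finally I would compose the two elementarities: for any $\Sigma_n$ formula $\varphi$ and any tuple $\bar a$ from $N$, the full elementarity $(N,\in) \prec (V_\alpha,\in)$ from the second step gives $(N,\in) \models \varphi[\bar a] \iff (V_\alpha,\in) \models \varphi[\bar a]$, and the $\Sigma_n$-elementarity $(V_\alpha,\in) \prec_n (V,\in)$ from the first step gives $(V_\alpha,\in) \models \varphi[\bar a] \iff (V,\in) \models \varphi[\bar a]$. Chaining these yields $(N,\in) \prec_n (V,\in)$, as required. The only genuinely subtle point is the metatheoretic step that extracts, for each individual $n$, a formula-by-formula ZFC-provable instance from Reflection plus definability of $\models_n$ — but this is precisely the standard workaround, attributed in the excerpt to Kunen~\cite{MR597342}, for the G\"odelian obstruction to literally asserting ``some $N$ is fully elementary in $V$.'' Once the set-sized reflection $V_\alpha$ is in hand, everything else reduces to routine set-sized model theory.
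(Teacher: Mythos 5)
Your proposal is correct and follows essentially the same route the paper itself sketches in the two paragraphs preceding the statement: Levy--Montague reflection applied to the (first-order definable) $\Sigma_n$-satisfaction predicate to obtain a set-sized $(V_\alpha,\in) \prec_n (V,\in)$ with $X \subseteq V_\alpha$, then the ordinary downward L\"owenheim--Skolem theorem inside $V_\alpha$, then composition of the two elementarities. One small correction to the bookkeeping: it is the $\omega$-step closure construction itself --- seeding stage $k+1$ with all ordinals below $\sup(N_k\cap\kappa)$, so that the union absorbs each of these initial segments --- and not ``elementarity of the resulting $N$ in $V_\alpha$'' that makes $N\cap\kappa$ an initial segment of $\kappa$; with that reading your argument for $N\cap\kappa\in\kappa$ is the standard one and is fine.
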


We will often use $\mathfrak{N}$ to denote the structure $(N,\in)$, and write $X \subset \mathfrak{N}$ when we really mean $X \subset N$.  

\begin{convention}
When we say \textbf{``$\mathfrak{N}$ is a sufficiently elementary submodel of $\boldsymbol{(V,\in)}$"}, or write 
\[
\mathfrak{N} \boldsymbol{\prec_*} (V,\in),
\]
we mean that $\mathfrak{N} \prec_n (V,\in)$ for some large enough $n$ for the purpose at hand.
\end{convention}

For example, one could just let $n$ be an upper bound on the complexities of all the (finitely many) set-theoretic statements appearing in this paper.  It turns out that all set-theoretic statements used in elementarity arguments in this paper can be written in a $\Sigma_1$ manner (so $n=1$ would work), but to prove that would be tedious, unnecessary, and obscure the main point.\footnote{For example, in the proofs we often want to reflect set-theoretic statements like ``$D$ is the directed colimit of the system $\mathcal{S}$"; this statement can be expressed in a $\Sigma_1$ way if the category is (say) locally finitely presentable, but such computations can be tedious.  There are contexts where it is crucial for checking that some statement is $\Sigma_1$-expressible (e.g., for some of the results in Bagaria et al.~\cite{MR3323199} or Cox~\cite{Cox_MaxDecon} where one not only needs elementarity of $\mathfrak{N}$ but also of its Mostowski collapse).}

Earlier we mentioned that the ``$\mathfrak{N} \cap \kappa \in \kappa$" requirement of Fact \ref{fact_LS_class} is for convenience; the next fact shows why.  To say that $N \cap \kappa$ is transitive just means it is downward closed as a set of ordinals; i.e., either $N \cap \kappa = \kappa$ or $N \cap \kappa \in \kappa$.
\begin{fact}\label{fact_element_subset}
If $\kappa$ is regular and uncountable and $N$ such that $\mathfrak{N} = (N,\in) \prec_* (V,\in)$ and $N \cap \kappa$ is transitive, the following holds:  
 \[
 \forall z \ \left( z \in N \text{ and } |z|<\kappa \right) \implies z \subset N.
 \]

\end{fact}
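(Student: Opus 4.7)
The plan is to apply elementarity of $\mathfrak{N}$ twice, with the transitivity of $N\cap\kappa$ sandwiched in between. Intuitively, $z$ admits inside $V$ an enumeration by some ordinal ${<}\kappa$; elementarity brings such an enumeration into $N$, transitivity brings its index set into $N$ pointwise, and a second use of elementarity brings the values (which exhaust $z$) into $N$.

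First I would use that $|z|<\kappa$ in $V$, so $(V,\in)$ satisfies the statement ``there exist an ordinal $\mu$ and a surjection $g\colon\mu\to z$''; since $z\in N$ and $\mathfrak{N}\prec_*(V,\in)$, sufficient elementarity provides witnesses $\lambda\in N$ (an ordinal) and $f\in N$ ($f\colon\lambda\to z$ surjective). Because $f$ witnesses $|\lambda|^V\le|z|^V<\kappa$ and $\kappa$ is a cardinal, we have $\lambda<\kappa$, so $\lambda\in N\cap\kappa$. Transitivity of $N\cap\kappa$ then gives $\lambda\subseteq N$.

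Next, for each $\alpha\in\lambda$ we have $\alpha,f\in N$, and the statement ``$(\alpha,y)\in f$'' is a bounded formula with parameters in $N$. Combined with the fact that $\mathfrak{N}$ also recognizes $f$ as a function, elementarity forces the unique such $y$, namely $f(\alpha)$, to lie in $N$. Since $f$ is surjective, $z=\{f(\alpha):\alpha<\lambda\}\subseteq N$, which is the desired conclusion.

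The only real hurdle is the routine formula-complexity bookkeeping: one must check that the two set-theoretic statements invoked above (``there is a surjection from some ordinal onto $z$'' and ``$(\alpha,y)\in f$'') sit at a $\Sigma_n$-level covered by $\prec_*$. Both are clearly $\Sigma_1$ or lower, so this falls comfortably within the paper's ``sufficient elementarity'' convention and is more of a cosmetic matter than a genuine obstacle.
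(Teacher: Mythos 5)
Your overall strategy is the same as the paper's (pull a surjection from an ordinal onto $z$ into $N$, use transitivity of $N\cap\kappa$ to get the ordinal as a subset of $N$, then apply elementarity pointwise to recover $z\subseteq N$), but there is a genuine error in the step where you argue the witnessing ordinal lies below $\kappa$. A surjection $f\colon\lambda\to z$ witnesses $|z|\le|\lambda|$, not $|\lambda|\le|z|$, so the inequality you invoke runs in the wrong direction. Concretely: if you reflect the bare existential ``there exist an ordinal $\mu$ and a surjection $g\colon\mu\to z$,'' elementarity only hands you \emph{some} witness $\lambda\in N$, and nothing prevents $\lambda\ge\kappa$ (e.g.\ $N$ certainly contains surjections from enormous ordinals onto a singleton). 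If $\lambda\ge\kappa$, transitivity of $N\cap\kappa$ gives you nothing about $\lambda$, you only get $f(\alpha)\in N$ for $\alpha\in N\cap\lambda$, and $f\restriction(N\cap\lambda)$ need not be surjective onto $z$ --- so the argument breaks. Note also that you cannot patch this by reflecting ``there is a surjection from an ordinal ${<}\kappa$ onto $z$,'' since the Fact does not assume $\kappa\in N$.

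The repair is exactly what the paper does: instead of an arbitrary existential witness, take the \emph{canonical} one. The cardinality $|z|$ (the least ordinal admitting a surjection, equivalently bijection, onto $z$) is definable from the parameter $z$, hence lies in $N$ by elementarity; it is ${<}\kappa$ by hypothesis, so transitivity of $N\cap\kappa$ gives $|z|\subseteq N$; then a surjection $f\colon|z|\to z$ with $f\in N$ exists by elementarity, and your second paragraph (which is fine) finishes the argument. So the idea is right, but the definability of the witness is the load-bearing point you skipped.
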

The fact is well-known but we provide the short proof:  since $z \in \mathfrak{N} \prec_* (V,\in)$, the cardinality $|z|$ (least ordinal of the same cardinality as $z$) is also an element of $\mathfrak{N}$. By assumption that $|z| < \kappa$ and $\mathfrak{N} \cap \kappa$ is transitive, it follows that $|z| \subset \mathfrak{N}$.  By elementarity there is a surjection $f: |z| \to z$ with $f \in \mathfrak{N}$.  Since $|z|=\text{dom}(f) \subset \mathfrak{N}$, the outputs of $f$ all lie in $\mathfrak{N}$ too, so $z \subset \mathfrak{N}$.

We describe how elementarity of $\mathfrak{N}$ in $(V,\in)$ can be useful in the context of monoid acts.  Suppose $S$ is a monoid and $S \cup \{ S \} \subset \mathfrak{N} \prec_* (V,\in)$; note that $S$ is both an element, and subset, of $\mathfrak{N}$.  To say that $S \in \mathfrak{N}$ is of course shorthand for saying that the underlying set of the monoid, together with the monoid operation $\cdot: S \times S \to S$, are elements of $\mathfrak{N}$.  Suppose $A$ is an $S$-act with $A \in \mathfrak{N}$ (which, similarly, is shorthand for saying that $A \in \mathfrak{N}$ and the action $\psi_A: A \times S \to A$ that defines the action of $S$ on $A$, is an element of $\mathfrak{N}$).  Then $\mathfrak{N} \cap A$ is a subact of $A$.  To see this, consider an arbitrary $a \in \mathfrak{N} \cap A$ and an arbitrary $s \in S$; then $s$ is also an element of $\mathfrak{N}$, by the assumption that $S \subset \mathfrak{N}$.  Now the universe $(V,\in)$ satisfies the formula
\begin{equation}\label{eq_SampleForm}
\exists  b \in A \ \ \psi_A(a,s) =b,
\end{equation}
whose free parameters are $A$, $\psi_A$, $a$, and $s$.\footnote{The displayed statement can of course be fully rewritten in the language of set theory, but it is tedious even to express ordered pairs.  See \cite{MR1940513} for such commonly-used translations of ordinary math into the ``machine language" of set theory.}  Because those free parameters are all elements of $\mathfrak{N}$ and $\mathfrak{N} \prec_* (V,\in)$, it follows that $\mathfrak{N}$ also satisfies the formula \eqref{eq_SampleForm}.  Then by definition of the satisfaction relation, this means there is a $b \in N \cap A$ such that $\mathfrak{N}$ satisfies ``$\psi_A(a,s)=b$", i.e., $\mathfrak{N}$ satisfies ``$\big( (a,s),b \big) \in \psi_A$".  This is essentially an atomic formula,\footnote{Leaving aside issues about coding ordered tuples.} and since the predicate on $\mathfrak{N}$ is $\in \cap (N \times N)$, this implies $\psi_A(a,s)=b$.  So $\mathfrak{N} \cap A$ is closed under the action.  A similar argument shows that if $A$ has a unique fixed point, then it must be a member of $\mathfrak{N} \cap A$; so
\[
A \in \text{Act}_0\text{-}S \ \implies \ \mathfrak{N} \cap A \in \text{Act}_0\text{-}S.
\]  

The next lemma is crucial to the proofs of the main theorems.
\begin{lemma}\label{lem_HomThm}
Suppose $S$ is a monoid and $S \cup \{S \} \subset \mathfrak{N} \prec_* (V,\in)$.  
\begin{enumerate}[label=(\roman*)]
 \item\label{item_RestrictCongQuot} If $B$ is an $S$-act and $\rho$ is a congruence on $B$, with $B$ and $\rho$ both elements of $\mathfrak{N}$, then $\mathfrak{N} \cap \rho$ is a congruence on $\mathfrak{N} \cap B$, and $\mathfrak{N} \cap \frac{B}{\rho} \simeq \frac{\mathfrak{N} \cap B}{\mathfrak{N} \cap \rho}$.  In particular, if $\rho$ is a Rees congruence with respect to a subact $A$ of $B$, then $\mathfrak{N} \cap \frac{B}{A} \simeq \frac{\mathfrak{N} \cap B}{\mathfrak{N} \cap A}$.
 \item\label{item_QuotPartOverN} If $A$ is a subact of $B$ with $A$ and $B$ both elements of $\mathfrak{N}$, then $B/A \in \mathfrak{N}$ and there is an isomorphism between the following Rees quotients: 
\[ 
\frac{B}{A \cup (\mathfrak{N}\cap B)} \simeq \frac{B/A}{\mathfrak{N} \cap (B/A)}.
\]
 
\end{enumerate}

\end{lemma}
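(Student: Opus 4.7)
The plan is to prove (i) directly using elementarity and the homomorphism theorem for $S$-acts, then derive (ii) by iterating (i) together with an explicit kernel computation.

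For (i), since the Rees quotient $B/\rho$ and the canonical map $q : B \to B/\rho$ are first-order definable from $B$ and $\rho$, both lie in $\mathfrak{N}$ by elementarity. I would first check that $\mathfrak{N} \cap \rho$ is a congruence on $\mathfrak{N} \cap B$: reflexivity, symmetry, transitivity, and compatibility with the $S$-action all follow by the same type of elementarity/closure argument used just before the lemma to verify that $\mathfrak{N} \cap B$ is itself a subact. Next I restrict $q$ to $\mathfrak{N} \cap B$. Its image lies in $\mathfrak{N} \cap (B/\rho)$ because $q, b \in \mathfrak{N}$ implies $q(b) \in \mathfrak{N}$; conversely, for any $c \in \mathfrak{N} \cap (B/\rho)$ the formula $\exists b \in B \ q(b) = c$ holds in $V$ and hence in $\mathfrak{N}$, producing a preimage in $\mathfrak{N} \cap B$. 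The kernel of $q \restriction (\mathfrak{N} \cap B)$ equals $\mathfrak{N} \cap \rho$, using that ordered pairs of elements of $\mathfrak{N}$ lie in $\mathfrak{N}$ by elementarity. The homomorphism theorem for $S$-acts then yields the isomorphism. The Rees specialization is immediate: restricted to $(\mathfrak{N} \cap B)^2$, the congruence $\mathfrak{N} \cap \rho_A$ consists of pairs $(b, b')$ with $b = b'$ or $b, b' \in \mathfrak{N} \cap A$, which is exactly the Rees congruence on $\mathfrak{N} \cap B$ with respect to $\mathfrak{N} \cap A$.

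For (ii), since $A, B \in \mathfrak{N}$, the Rees quotient $B/A$ and its quotient map $q_1 : B \to B/A$ both lie in $\mathfrak{N}$ by elementarity. Let $q_2$ be the Rees quotient map $B/A \to (B/A)/\bigl(\mathfrak{N} \cap (B/A)\bigr)$. The composite $q_2 \circ q_1$ is surjective, and I claim its kernel is the Rees congruence on $B$ with respect to the subact $A \cup (\mathfrak{N} \cap B)$. By part (i), $\mathfrak{N} \cap (B/A)$ is the $q_1$-image of $\mathfrak{N} \cap B$ (with all of $\mathfrak{N} \cap A$ collapsed). A short case-split, noting that if $A \neq \emptyset$ then $\mathfrak{N} \cap A \neq \emptyset$ by elementarity (so the collapsed class $[A]$ lies in $\mathfrak{N} \cap (B/A)$), shows that $q_1(b) \in \mathfrak{N} \cap (B/A)$ iff $b \in A \cup (\mathfrak{N} \cap B)$, handling uniformly the case $A = \emptyset$. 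Then $q_2(q_1(b)) = q_2(q_1(b'))$ iff $(b, b') \in \rho_A$ or both $q_1(b), q_1(b') \in \mathfrak{N} \cap (B/A)$, which simplifies to $b = b'$ or both $b, b' \in A \cup (\mathfrak{N} \cap B)$. The homomorphism theorem then gives the stated isomorphism.

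The principal obstacle is careful bookkeeping rather than mathematical depth. One must be attentive to the case $A = \emptyset$ (equivalently, to whether $\mathfrak{N}$ intersects $A$) when identifying $\mathfrak{N} \cap (B/A)$ with the $q_1$-image of $\mathfrak{N} \cap B$, and to the absoluteness of ordered pair formation when transferring congruence membership between $V$ and $\mathfrak{N}$. Once these points are settled, the kernel computation is routine.
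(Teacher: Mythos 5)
Your proposal is correct and follows essentially the same route as the paper: both parts are established by exhibiting a surjective homomorphism onto the claimed quotient, computing its kernel, and invoking the Homomorphism Theorem for $S$-acts, with part (ii) reducing to the equivalence $q_1(b) \in \mathfrak{N} \cap (B/A) \iff b \in A \cup (\mathfrak{N} \cap B)$. The only (harmless) variation is that you derive the forward direction of that equivalence from the surjectivity statement in part (i), whereas the paper argues directly that a non-collapsed class $[b]_A = \{b\}$ lying in $\mathfrak{N}$ forces $b \in \mathfrak{N}$; both arguments are valid.
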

\begin{proof}
Note that the assumption $S \cup \{ S \} \subset \mathfrak{N} \prec_* (V,\in)$ implies that $\mathfrak{N} \cap A$ is a subact of $A$, whenever $A$ is an $S$-act with $A \in \mathfrak{N}$ (by the discussion above).

\ref{item_RestrictCongQuot}:  That $\mathfrak{N} \cap \rho$ is an equivalence relation on $\mathfrak{N} \cap B$, and that $\mathfrak{N} \cap \rho$ is invariant w.r.t.\ multiplication by $S$, are straightforward applications of the assumptions that $S \cup \{S \} \subset \mathfrak{N} \prec_* (V,\in)$ and $\rho \in \mathfrak{N}$.  To see that $\mathfrak{N} \cap \frac{B}{\rho} \simeq \frac{\mathfrak{N} \cap B}{\mathfrak{N} \cap \rho}$, define $\varphi: \mathfrak{N} \cap B \to \mathfrak{N} \cap \frac{B}{\rho}$ by sending any $b \in \mathfrak{N} \cap B$ to $[b]_\rho$.  This is clearly an act homomorphism, and it is surjective by elementarity of $\mathfrak{N}$, since for any equivalence class $X \in \mathfrak{N} \cap \frac{B}{\rho}$, $\mathfrak{N}$ sees some $b \in B$ such that $[b]_\rho=X$.  And for any $b_0,b_1 \in \mathfrak{N} \cap B$,
\begin{equation*}
(b_0,b_1) \in \text{ker} \varphi \iff  \varphi(b_0) = \varphi(b_1) \iff [b_0]_{\rho} = [b_1]_\rho \iff (b_0,b_1) \in \mathfrak{N} \cap \rho.
\end{equation*}
Then by the Homomorphism Theorem (Chapter I Corollary 4.22 of \cite{MR1751666}), 
\[
\frac{\mathfrak{N} \cap B}{\mathfrak{N} \cap \rho} = \frac{\mathfrak{N} \cap B}{\text{ker}\varphi} \simeq \text{im} \varphi = \mathfrak{N} \cap \frac{B}{\rho}.
\]

\ref{item_QuotPartOverN}: Define
\[
\psi: B \to \frac{B/A}{\mathfrak{N} \cap (B/A)}, \ \ b \mapsto \big[ [b]_A \big]_{\mathfrak{N} \cap (B/A)}.
\]
Clearly this is a surjective homomorphism.  We claim that the congruence $\text{ker} \psi$ is the same as the Rees congruence on $B$ induced by the subact $A \cup (\mathfrak{N} \cap B)$.  For any $b,b' \in B$:
\begin{align}
& (b_0,b_1) \in \text{ker} \psi  & \\
\iff & \psi(b_0)=\psi(b_1) & \\
\iff & \big[ [b_0]_A \big]_{\mathfrak{N} \cap (B/A)}=\big[ [b_1]_A \big]_{\mathfrak{N} \cap (B/A)} & \\
\iff & [b_0]_A = [b_1]_A \text{ or } \Big(  [b_0]_A, \  [b_1]_A \in  \mathfrak{N} \Big) & \\
\iff & b_0=b_1 \text{ or } \Big( b_0,b_1 \in A \cup (\mathfrak{N} \cap B)  \Big)&   
\end{align}
The only nontrivial equivalence is the last one.  To see the $\Rightarrow$ direction:  if $[b_0]_A = [b_1]_A$ then either $b_0 = b_1$ or both $b_0$ and $b_1$ are in $A$, so the last line holds.  Now assume both $X:=[b_0]_A$ and $Y:=[b_1]_A$ are elements of $\mathfrak{N}$ (note here we are \emph{not} assuming $b_0$ and $b_1$ are elements of $\mathfrak{N}$; just that their equivalence classes are).  We show both $b_0$ and $b_1$ must lie in $A \cup (\mathfrak{N} \cap B)$.  If not, then at least one of them, say, $b_0$, must lie in $B \setminus (A \cup  \mathfrak{N})$.  Since $b_0 \notin A$ then $X=[b_0]_A = \{ b_0 \}$ and hence, since $X \in \mathfrak{N} \prec_* (V,\in)$, the unique element of $X$, i.e., $b_0$, must lie in $\mathfrak{N}$, which is a contradiction.  

To see the $\Leftarrow$ direction of the final equivalence, assume $b_0, b_1$ are both in $A \cup (\mathfrak{N} \cap B)$.  Then for each $i$, either $[b_i]_A = A$ which is an element of $\mathfrak{N}$ (since $A \in \mathfrak{N}$ by assumption), or else $b_i \in \mathfrak{N}$ and hence (by elementarity of $\mathfrak{N}$) $[b_i]_A \in \mathfrak{N}$.  So both $[b_0]_A$ and $[b_1]_A$ lie in $\mathfrak{N}$.
\end{proof}

\subsection{The new characterization}

Suppose $f: A \to B$ is a morphism of $S$-acts with $f \in \mathfrak{N}$ (by which we mean $f$, $A$, and $B$ are elements of $\mathfrak{N}$) and $S \cup \{S \} \subset \mathfrak{N} \prec_* (V,\in)$.  Then since $f \in \mathfrak{N} \prec_* (V,\in)$ it follows that $f(a) \in \mathfrak{N}$ whenever $a \in \mathfrak{N} \cap A$.  Hence, the restriction of $f$ to $\mathfrak{N} \cap A$ maps \emph{into} $\mathfrak{N} \cap B$.  We let $f \restriction \mathfrak{N}$ denote this restriction, with domain $\mathfrak{N} \cap A$ and codomain $\mathfrak{N} \cap B$.  Hence, the bottom row of the diagram in the following definition makes sense, and the outer vertical maps are embeddings of $S$-acts:

\begin{definition}\label{def_MainDiag}
Suppose $S$ is a monoid and $\mathcal{K}$ is one of $\overline{\text{Act}}$-$S$, Act-$S$, or Act$_0$-$S$.  Suppose $S \cup \{ S \} \subset \mathfrak{N} \prec_* (V,\in)$.  For any morphism $f: A \to B$ such that $f \in \mathfrak{N}$, $\mathcal{D}^{f,\mathfrak{N}}$ denotes the following commutative diagram, whose lower left portion is a pushout, and where $r^{f,\mathfrak{N}}$ denotes the unique map from the pushout that commutes with the rest of the diagram:

\[\begin{tikzcd}
	& A &&&&& B \\
	\\
	{\mathcal{D}^{f,\mathfrak{N}}:=} &&& {P^{f,\mathfrak{N}}} \\
	\\
	& {\mathfrak{N} \cap A} &&&&& {\mathfrak{N} \cap B}
	\arrow["{f \ (\in \mathfrak{N})}", hook, from=1-2, to=1-7]
	\arrow[curve={height=12pt}, from=1-2, to=3-4]
	\arrow["{r^{f,\mathfrak{N}}}"', from=3-4, to=1-7]
	\arrow["\ulcorner"{anchor=center, pos=0.125, rotate=-90}, draw=none, from=3-4, to=5-2]
	\arrow["\subseteq", hook, from=5-2, to=1-2]
	\arrow["{f \restriction \mathfrak{N}}"', hook, from=5-2, to=5-7]
	\arrow["\subseteq"', hook, from=5-7, to=1-7]
	\arrow[curve={height=-12pt}, from=5-7, to=3-4]
\end{tikzcd}\]
\end{definition}

Motivated by work of Barr, Borceaux-Rosick\'y~\cite{MR2280436} defined what it means for a locally finitely presentable category to satisfy ``effective unions of $\mathcal{M}$-subobjects" for a class $\mathcal{M}$ of morphisms.  This was a sufficient, but not necessary, condition for $\mathcal{M}$ to be cellularly generated.  It is not necessary because, for example, the class of pure monomorphisms is cofibrantly generated in the category \textbf{Ab} of abelian groups, but does not have effective unions of subobjects in \textbf{Ab} (\cite{MR4121092}).  We realized that if one weakens their condition to merely hold ``almost everywhere", then this exactly characterizes cellular generation.

\begin{definition}\label{def_EffectiveSubobj}
Suppose $S$ is a monoid, $\mathcal{K}$ is either Act-$S$, $\overline{\text{Act}}$-$S$, or Act$_0$-$S$, and $\mathcal{M}$ is a class of morphisms in $\mathcal{K}$ that is cellularly closed (i.e., closed under pushouts and transfinite compositions).  For a regular cardinal $\kappa > |S|$, we say that \textbf{$\boldsymbol{\mathcal{M}}$ is $\boldsymbol{\kappa}$-almost-everywhere (a.e.) effective} if the following holds for some natural number $n$:\footnote{This definition is really a definition \emph{scheme} indexed by natural numbers.}
\begin{align*}
\left( \exists p  \  \forall \mathfrak{N} \right) & \Big( \textbf{if } S \cup \{S,p \} \subset \mathfrak{N}\prec_{\Sigma_n} (V,\in) \text{ and } \mathfrak{N} \cap \kappa \text{ is transitive,} \\
& \textbf{then } (\forall f \in \mathfrak{N} \cap \mathcal{M})( f \restriction \mathfrak{N} \text{ and } r^{f,\mathfrak{N}} \text{ are both in } \mathcal{M} )\Big),
\end{align*}
where $r^{f,\mathfrak{N}}$ denotes the map from Definition \ref{def_MainDiag}.  We say \textbf{$\boldsymbol{\mathcal{M}}$ is a.e.-effective} if it is $\kappa$-a.e.\ effective for some regular $\kappa$ (equivalently, a tail end of regular $\kappa$, see below).   
\end{definition}
\noindent The ``almost everywhere" terminology is chosen because one may rephrase the displayed statement using a slightly nonstandard interpretation of Shelah's ``almost all" quantifier:
\[
\left( \text{aa}^*_\kappa \mathfrak{N} \right) \left( \forall f \in \mathfrak{N} \cap \mathcal{M} \right)\left(  f \restriction \mathfrak{N} \in \mathcal{M} \text{ and } r^{f,\mathfrak{N}} \in \mathcal{M}\right).
\]
Intuitively, the ``almost" in ``almost all" is because of the presence of the constraint that $\mathfrak{N}$ has to include the parameter $p$ and the parameters in $S \cup \{ S \}$, and be sufficiently elementary in $(V,\in)$.  The $p$ will very often be empty or trivial.  One could also make sense of ``restriction to $\mathfrak{N}$" for many other categories (e.g., any locally presentable category), but we stick with monoid acts for simplicity.  

If $\kappa_0 < \kappa_1$ are regular cardinals and $\mathfrak{N}\cap \kappa_1$ is transitive, then $\mathfrak{N} \cap \kappa_0$ is transitive.  It follows easily that if $\mathcal{M}$ is $\kappa_0$-a.e.\ effective then it is $\kappa_1$-a.e.\ effective.  This justifies the parenthetical comment in the definition.

Before proving our key Theorem \ref{thm_NewChar} we need the next lemma.
  
\begin{lemma}\label{lem_restrictColimit}
Suppose $S$ is a monoid and $\mathcal{K}$ is one of $\overline{\text{Act}}$-$S$, Act-$S$, or Act$_0$-$S$.  Suppose 
\[\begin{tikzcd}
	A && C \\
	\\
	B && P
	\arrow["f", from=1-1, to=1-3]
	\arrow["g"', from=1-1, to=3-1]
	\arrow["{g'}", from=1-3, to=3-3]
	\arrow["{f'}"', from=3-1, to=3-3]
	\arrow["\ulcorner"{anchor=center, pos=0.125, rotate=180}, draw=none, from=3-3, to=1-1]
\end{tikzcd}\]
is a pushout square in $\mathcal{K}$.  If $S \cup \{ S,f,g,f',g' \} \subset \mathfrak{N} \prec_* (V,\in)$, then the square's restriction to $\mathfrak{N}$
\[\begin{tikzcd}
	{\mathfrak{N} \cap A} && {\mathfrak{N} \cap C} \\
	\\
	{\mathfrak{N} \cap B} && {\mathfrak{N} \cap P}
	\arrow["{f \restriction \mathfrak{N}}", from=1-1, to=1-3]
	\arrow["{g \restriction \mathfrak{N}}"', from=1-1, to=3-1]
	\arrow["{g' \restriction \mathfrak{N}}", from=1-3, to=3-3]
	\arrow["{f' \restriction \mathfrak{N}}"', from=3-1, to=3-3]
\end{tikzcd}\]
is also a pushout square.\footnote{This is an instance of a more general fact:  if $\mathcal{D}$ is a diagram in a locally finitely presentable category $\mathcal{K}$, $\mathcal{D} \in \mathfrak{N} \prec_* (V,\in)$, and (a representative set of) the finitely presentable objects of $\mathcal{K}$ is contained in $\mathfrak{N}$, then it is possible to make sense of ``restriction to $\mathfrak{N}$", and one can show $\text{colim} \left( \mathcal{D} \restriction \mathfrak{N} \right) = \left( \text{colim} \mathcal{D} \right) \restriction \mathfrak{N}$.}
\end{lemma}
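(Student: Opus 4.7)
The plan is to combine the explicit construction of pushouts in $\mathcal{K}$ as quotients of coproducts with Lemma \ref{lem_HomThm}\ref{item_RestrictCongQuot}, which says that restriction to $\mathfrak{N}$ commutes with passage to the quotient by a congruence.

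First I would realize the pushout as $P = (B \sqcup C)/\rho$, where $\rho$ is the smallest congruence on $B \sqcup C$ identifying $g(a)$ with $f(a)$ (via the coproduct inclusions) for each $a \in A$. Since $A,B,C,f,g,f',g'$ all lie in $\mathfrak{N}$ and $S \cup \{S\} \subset \mathfrak{N}$, elementarity yields $B \sqcup C \in \mathfrak{N}$, $\rho \in \mathfrak{N}$, and $\mathfrak{N} \cap (B \sqcup C) = (\mathfrak{N} \cap B) \sqcup (\mathfrak{N} \cap C)$. Applying Lemma \ref{lem_HomThm}\ref{item_RestrictCongQuot} then gives
\[
\mathfrak{N} \cap P \ \simeq \ \bigl( (\mathfrak{N} \cap B) \sqcup (\mathfrak{N} \cap C) \bigr) / (\mathfrak{N} \cap \rho).
\]

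The crux is to identify $\mathfrak{N} \cap \rho$ with the congruence $\rho'$ on $(\mathfrak{N} \cap B) \sqcup (\mathfrak{N} \cap C)$ generated by the pairs $\{(g(a), f(a)) : a \in \mathfrak{N} \cap A\}$. The inclusion $\rho' \subseteq \mathfrak{N} \cap \rho$ is immediate, since the generators of $\rho'$ lie in $\mathfrak{N} \cap \rho$ and the latter is already a congruence on $\mathfrak{N} \cap (B \sqcup C)$. For the reverse, I would invoke the standard zigzag characterization of a generated congruence: a pair $(x,y)$ lies in $\rho$ iff there is a finite sequence $x = z_0, \ldots, z_n = y$ in which each consecutive step is of the form $\{z_i, z_{i+1}\} = \{g(a)s, f(a)s\}$ for some $a \in A$ and $s \in S^1$. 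Elementarity of $\mathfrak{N}$ transfers the existence of such a zigzag, for $(x,y) \in \mathfrak{N} \cap \rho$, into the existence of one whose witnesses (the $z_i$, the $a$'s, and the $s$'s) all lie in $\mathfrak{N}$; this places $(x,y)$ in $\rho'$.

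Once this identification is established, the restricted quotient is by construction the pushout of $f \restriction \mathfrak{N}$ and $g \restriction \mathfrak{N}$, with structure maps precisely $f' \restriction \mathfrak{N}$ and $g' \restriction \mathfrak{N}$. The Act$_0$-$S$ case needs only the additional remark that the distinguished fixed point lies in $\mathfrak{N}$ by elementarity, and is preserved in passing to the quotient. The main obstacle is the zigzag identification above; it is the only step that requires genuine combinatorics of congruences, while everything else is formal bookkeeping using Lemma \ref{lem_HomThm} together with elementarity of $\mathfrak{N}$.
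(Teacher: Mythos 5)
Your proposal is correct and follows essentially the same route as the paper's proof: realize the pushout as $(B \sqcup C)/\rho$ with $\rho$ generated by the pairs $(g(a),f(a))$, apply Lemma \ref{lem_HomThm}\ref{item_RestrictCongQuot}, and identify $\mathfrak{N} \cap \rho$ with the congruence generated by the restricted pairs. In fact your zigzag argument spells out the one step the paper leaves to elementarity, so nothing is missing.
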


\begin{proof}
The pushout $P$ can be explicitly described as 
\[
P = \frac{B \sqcup C}{\overline{R_{f,g}}}
\]  
where $\overline{R_{f,g}}$ is the equivalence relation generated by the binary relation
\[
R_{f,g}:=\{ (g(a),f(a))  \ : \ a \in A \}
\]
on $B \sqcup C$.  And $f'$ and $g'$ are the respective compositions of $f$ and $g$ with the factor map $B \sqcup C \to P$.  

By Lemma \ref{lem_HomThm}
\[
\mathfrak{N} \cap P = \mathfrak{N} \cap \frac{B \sqcup C}{\overline{R_{f,g}}} \simeq \frac{\mathfrak{N} \cap (B \sqcup C)}{\mathfrak{N} \cap \overline{R_{f,g}}} 
\]
Now $\mathfrak{N} \cap (B \sqcup C) = (\mathfrak{N} \cap B) \sqcup (\mathfrak{N} \cap C)$, and elementarity of $\mathfrak{N}$ (and the fact that $f,g \in \mathfrak{N}$) imply that $\overline{R_{f,g}}$ is the same as the equivalence relation generated by the relation
\[
R_{f \restriction \mathfrak{N}, g \restriction \mathfrak{N}} = \{ (g(a),f(a)) \ : \ a \in \mathfrak{N} \cap A  \}.
\]
Then
\[
\frac{\mathfrak{N} \cap (B \sqcup C)}{\mathfrak{N} \cap \overline{R_{f,g}}}  = \frac{(\mathfrak{N} \cap B) \sqcup (\mathfrak{N} \cap C)}{\overline{R_{f \restriction{N}, g \restriction \mathfrak{N}}}}
\]
which is the pushout of the span created by $f \restriction \mathfrak{N}$ and $g \restriction \mathfrak{N}$.
\end{proof}

\begin{theorem}\label{thm_NewChar}
Suppose $S$ is a monoid and $\mathcal{K}$ is one of $\overline{\text{Act}}$-$S$, Act-$S$, or Act$_0$-$S$.  Suppose $\mathcal{M}$ is a cellularly-closed class of monomorphisms in $\mathcal{K}$.  The following are equivalent:

\begin{enumerate}[label=(\Roman*)]
 \item\label{item_AE} $\mathcal{M}$ is almost-everywhere effective.
 \item\label{item_CellGen} $\mathcal{M}$ is cellularly generated.
\end{enumerate}

\end{theorem}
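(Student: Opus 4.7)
The plan is to prove both implications by tracking how $\mathcal{M}$-morphisms interact with sufficiently elementary submodels, leveraging the structural fact that cellular witnesses restrict nicely to $\mathfrak{N}$.

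For $(\ref{item_CellGen})\Rightarrow(\ref{item_AE})$, fix a set $\mathcal{M}_0$ with $\mathcal{M}=\text{cell}(\mathcal{M}_0)$, take a regular $\kappa>|S|+|\mathcal{M}_0|$ bounding the sizes of domains and codomains of members of $\mathcal{M}_0$, and let $p:=\mathcal{M}_0$. Fact \ref{fact_element_subset} then ensures $\mathcal{M}_0\subseteq\mathfrak{N}$, and moreover $\text{dom}(g)\cup\text{cod}(g)\subseteq\mathfrak{N}$ for every $g\in\mathcal{M}_0$. Given $f\in\mathfrak{N}\cap\mathcal{M}$, elementarity yields a witness $\vec{f}=\langle f_{\alpha,\beta}:A_\alpha\to A_\beta\rangle_{\alpha\le\beta\le\mu}\in\mathfrak{N}$ with $f=f_{0,\mu}$ and each successor step a pushout of some $g_\beta\in\mathcal{M}_0$. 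The key combinatorial observation is that any element of $A_{\beta+1}\setminus A_\beta$ has ``time of entry'' $\beta+1$, so can lie in $\mathfrak{N}$ only if $\beta+1\in\mathfrak{N}$, equivalently $\beta\in\mathfrak{N}$; conversely, if $\beta\in\mathfrak{N}$ then $g_\beta\in\mathfrak{N}$ with $\text{cod}(g_\beta)\subseteq\mathfrak{N}$, so every new element is in $\mathfrak{N}$.

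The $f\restriction\mathfrak{N}\in\mathcal{M}$ half then follows from the chain $\{\mathfrak{N}\cap A_\beta\}_{\beta\le\mu}$: the successor step is the identity when $\beta\notin\mathfrak{N}$ and, by Lemma \ref{lem_restrictColimit}, a pushout of $g_\beta\in\mathcal{M}_0$ when $\beta\in\mathfrak{N}$; continuity at limits uses the colimit presentation $A_\beta=\bigcup_{\gamma<\beta}A_\gamma$ together with elementarity. For $r^{f,\mathfrak{N}}$, first note that $r^{f,\mathfrak{N}}$ is monic onto the subact $f(A)\cup(\mathfrak{N}\cap B)$ (if $a\in A$ has $f(a)\in\mathfrak{N}$, then $a$ is definable from $f$ and $f(a)$, hence $a\in\mathfrak{N}$), so one identifies $P^{f,\mathfrak{N}}$ with this subact. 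Then work with the dual chain $\tilde P_\beta:=A_\beta\cup(\mathfrak{N}\cap B)\subseteq B$: at $\beta\notin\mathfrak{N}$ the new elements of $A_{\beta+1}\setminus A_\beta$ lie disjointly from $\mathfrak{N}\cap B$, and $\tilde P_\beta\hookrightarrow\tilde P_{\beta+1}$ is verified to be the pushout of $g_\beta$ along its attaching map; at $\beta\in\mathfrak{N}$ the new elements already lie in $\mathfrak{N}\cap B\subseteq\tilde P_\beta$, so the step is the identity. Since $\tilde P_0\cong P^{f,\mathfrak{N}}$, $\tilde P_\mu=B$, and the chain is continuous at limits, $r^{f,\mathfrak{N}}\in\mathcal{M}$.

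For $(\ref{item_AE})\Rightarrow(\ref{item_CellGen})$, let $\kappa,p$ witness a.e.-effectiveness and set $\mathcal{M}_0:=\{g\in\mathcal{M}:|\text{dom}(g)|,|\text{cod}(g)|<\kappa\}$, which is a set up to isomorphism. Given $f:A\to B$ in $\mathcal{M}$, build a continuous $\prec_*$-chain $\langle\mathfrak{N}_i\rangle_{i\le\lambda}$ with $\mathfrak{N}_i\in\mathfrak{N}_{i+1}$, each containing $\{S,p,f\}$ and with $\mathfrak{N}_i\cap\kappa$ transitive, long enough that $\bigcup_i\mathfrak{N}_i\supseteq B$. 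Iteratively set $A_0:=A$, $f_0:=f$, $A_{i+1}:=P^{f_i,\mathfrak{N}_i}$, and $f_{i+1}:=r^{f_i,\mathfrak{N}_i}$; the condition $\mathfrak{N}_i\in\mathfrak{N}_{i+1}$ together with elementarity ensures $f_i\in\mathfrak{N}_i$ at every stage, so a.e.-effectiveness gives $f_i\restriction\mathfrak{N}_i\in\mathcal{M}_0$ and $f_{i+1}\in\mathcal{M}$, making $A_i\to A_{i+1}$ a pushout of an $\mathcal{M}_0$-morphism. An easy induction using injectivity of $r^{f_i,\mathfrak{N}_i}$ identifies $A_{i+1}$ (via the composite map to $B$) with $f(A)\cup(\mathfrak{N}_i\cap B)\subseteq B$; by the covering property $A_\lambda=B$, exhibiting $f\in\text{cell}(\mathcal{M}_0)$.

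The main obstacle is the $r^{f,\mathfrak{N}}$ half of $(\ref{item_CellGen})\Rightarrow(\ref{item_AE})$: the apparently natural chain $\{A_\alpha\sqcup_{\mathfrak{N}\cap A_\alpha}(\mathfrak{N}\cap B)\}_\alpha$ does \emph{not} step-wise decompose as pushouts of $\mathcal{M}_0$ at steps $\beta\in\mathfrak{N}$, because the amalgamation subobject itself grows there. The trick is the passage to $\tilde P_\beta$ inside $B$, which effectively swaps the roles of ``inside'' and ``outside'' of $\mathfrak{N}$: $\beta\in\mathfrak{N}$ steps become trivial, while $\beta\notin\mathfrak{N}$ steps become single pushouts. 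In the converse direction, the corresponding subtlety is the requirement that $\mathfrak{N}_i\in\mathfrak{N}_{i+1}$, so that each iteratively-defined $f_i$ remains an element of $\mathfrak{N}_i$ and a.e.-effectiveness can be invoked at every stage.
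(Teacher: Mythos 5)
Your proof of the direction \ref{item_CellGen}~$\Rightarrow$~\ref{item_AE} is essentially the paper's own argument: the same ``time of entry'' dichotomy, the same chain $\tilde P_\beta=A_\beta\cup(\mathfrak{N}\cap B)$ (the paper's $Q_\alpha$), and the same case split in which $\beta\in\mathfrak{N}$ yields a trivial step and $\beta\notin\mathfrak{N}$ yields a single pushout of $g_\beta$. That half is correct.

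The genuine gap is in \ref{item_AE}~$\Rightarrow$~\ref{item_CellGen}. You take $\mathcal{M}_0$ to be the members of $\mathcal{M}$ with domain and codomain of size $<\kappa$, and a single continuous, $\subseteq$-increasing chain $\langle\mathfrak{N}_i\rangle_{i\le\lambda}$ whose union covers $B$. If $|B|>\kappa$, such a chain must increase strictly at least $|B|>\kappa$ many times, and continuity then forces $|\mathfrak{N}_i|\ge\kappa$ from some stage on. At those stages $f_i\restriction\mathfrak{N}_i$ has codomain $\mathfrak{N}_i\cap B$, which can have size $\ge\kappa$; a.e.-effectiveness places this map in $\mathcal{M}$ but \emph{not} in $\mathcal{M}_0$, so your factorization only exhibits $f$ as a transfinite composition of pushouts of members of $\mathcal{M}$, which is vacuous. (A smaller slip: at limit $i$ the map $f_i$ is the inclusion of $A\cup(\mathfrak{N}_i\cap B)$ into $B$, which is generally \emph{not} an element of $\mathfrak{N}_i$, since $\mathfrak{N}_i\cap B\notin\mathfrak{N}_i$; this happens to be harmless because the limit-stage step is the identity, but the blanket claim that $f_i\in\mathfrak{N}_i$ at every stage is false.) The paper repairs the size problem by inducting on $\lambda=|\mathrm{cod}(f)|$: one uses a chain of models of size $<\lambda$ (of length only $\mathrm{cf}(\lambda)$), obtains each successor step as a pushout of a member of $\mathcal{M}$ whose codomain has size $<\lambda$, and then invokes the induction hypothesis to decompose each such member into $\text{cell}(\mathcal{M}^{<\kappa})$; the ``diagonal'' use of a.e.-effectiveness --- applying it to $r^{f,\mathfrak{N}_\alpha}$ as an element of $\mathfrak{N}_{\alpha+1}$, not merely to $f$ itself --- is exactly what makes that inductive step available. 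As written, your single-pass construction establishes $f\in\text{cell}(\mathcal{M}_0)$ only when $|B|\le\kappa$.
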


\begin{remark}
Theorem \ref{thm_NewChar} holds in more generality.  Namely, the assumption that $\mathcal{M} \subseteq \text{Mono}$ can be dropped, and the proof works for any locally presentable category (under the appropriately modified notion of ``restriction to $\mathfrak{N}$").   This will appear in a separate paper.  We stated Theorem \ref{thm_NewChar} in the form above because it is sufficient for the applications in this paper, and its proof is simpler than the general situation.  
\end{remark}

\begin{proof}
(of Theorem \ref{thm_NewChar}):  We prove the theorem for the category $\overline{\text{Act}}$-$S$; the proofs for Act-$S$ and Act$_0$-$S$ are basically identical, since pushouts in those categories agree with pushouts from the ambient category $\overline{\text{Act}}$-$S$.
 
\ref{item_AE} $\implies$ \ref{item_CellGen}:  Let $\kappa > |S|$ witness that $\mathcal{M}$ is a.e.\ effective.  We prove by induction on cardinals $\lambda$, that if $f \in \mathcal{M}$ and $|\text{cod}(f)| \le \lambda$, then $f \in \text{cell} \big(\mathcal{M}^{<\kappa}\big)$, where $\mathcal{M}^{<\kappa}$ denotes a representative set of members of $\mathcal{M}$ whose codomains have cardinality $<\kappa$.

This trivially holds for $\lambda <\kappa$.  Assume that $\lambda \ge \kappa$ is a cardinal, and that the claim holds for all members of $\mathcal{M}$ with codomains of size $<\lambda$.  Consider an arbitrary $f: A \to B$ in $\mathcal{M}$ with $|B|=\lambda$; since $\mathcal{M} \subset \text{Mono}$ we can without loss of generality assume $f$ as an inclusion.  By a basic application of Fact \ref{fact_LS_class} (see, e.g., Fact 2.2 of \cite{Cox_MaxDecon}), there is a sequence $\langle \mathfrak{N}_\alpha \ : \ \alpha \le \text{cf}(\lambda) \rangle$\footnote{$\text{cf}(\lambda)$ denotes the cofinality of $\lambda$, i.e., the least cardinal $\mu$ such that there is an increasing and cofinal function $\mu \to \lambda$.} of sufficiently elementary submodels of $(V,\in)$\footnote{I.e., a fixed sufficiently large $n$ such that $\mathfrak{N}_\alpha \prec_n (V,\in)$ for each $\alpha$.} such that 
\begin{enumerate}
 \item $\mathfrak{N}_\alpha$ is both an element and subset of $\mathfrak{N}_{\alpha+1}$ for each $\alpha < \text{cf}(\lambda)$;
 \item each $\mathfrak{N}_\alpha$ has transitive intersection with $\kappa$; 
 \item if $\alpha < \text{cf}(\lambda)$ then $|\mathfrak{N}_\alpha|<\lambda$;
 \item $S \cup \{ S,A,B\} \subset \mathfrak{N}_0$;

 \item (continuity) for all limit ordinals $\alpha \le \text{cf}(\lambda)$, $\mathfrak{N}_\alpha = \bigcup_{\zeta < \alpha} \mathfrak{N}_\zeta$; and

 \item $B \subset \mathfrak{N}_{\text{cf}(\lambda)}$.
\end{enumerate}
By assumption \ref{item_AE},

\begin{equation}\label{eq_all_alpha_h_nice}
\forall \alpha \le \text{cf}(\lambda) \ \forall h \in \mathcal{M} \cap \mathfrak{N}_\alpha: \text{ both } h \restriction \mathfrak{N}_\alpha \text{ and } r^{h,\mathfrak{N}_\alpha} \text{ are members of } \mathcal{M}. \tag{*}
\end{equation} 
Note the ``diagonal" nature of \eqref{eq_all_alpha_h_nice}; it quantifies over \emph{all} members $h$ of $\mathcal{M} \cap \mathfrak{N}_\alpha$, not just the inclusion $f: A \subset B$.  It follows immediately (since $f \in \mathfrak{N}_0 \subseteq \mathfrak{N}_\alpha$ for all $\alpha$) that 

\begin{equation}\label{eq_rest_in_M}
\forall \alpha \ \ f \restriction \mathfrak{N}_\alpha \in \mathcal{M} \text{ and } r^{f,\mathfrak{N}_\alpha} \in \mathcal{M}.
\end{equation}

Moreover, if $\alpha < \text{cf}(\lambda)$, then since $\mathfrak{N}_\alpha$ is an \emph{element} of $\mathfrak{N}_{\alpha+1}$, the diagram $\mathcal{D}^{f,\mathfrak{N}_\alpha}$ and in particular the function $r^{f,\mathfrak{N}_\alpha}$ is an \emph{element} of $\mathfrak{N}_{\alpha+1}$, since $\mathfrak{N} \prec_* (V,\in)$. So it makes sense to restrict $r^{f,\mathfrak{N}_\alpha}$ to $\mathfrak{N}_{\alpha+1}$.  Then we apply \eqref{eq_all_alpha_h_nice} again (with $h=r^{f,\mathfrak{N}_\alpha} \in \mathfrak{N}_{\alpha+1}$) to conclude that
 
\begin{equation}\label{eq_quot_in_M}
\forall \alpha < \text{cf}(\lambda)  \ \  r^{f,\mathfrak{N}_\alpha} \restriction \mathfrak{N}_{\alpha+1} \text{ is a member of } \mathcal{M}.
\end{equation}

We will use these facts to write $f$ as a transfinite composition of maps from $\mathcal{M}$ that each have codomain of size $<\lambda$, and then use the induction hypothesis on those pieces to conclude that $f$ is in $\text{cell}(\mathcal{M}^{<\kappa})$.

For each $\alpha$ consider the diagram $\mathcal{D}^{f,\mathfrak{N}_\alpha}$.  Since $A \cap (\mathfrak{N}_\alpha \cap B) = \mathfrak{N}_\alpha \cap A$, the pushout of the span $A \leftarrow \mathfrak{N}_\alpha \cap A \rightarrow \mathfrak{N}_\alpha \cap B$ is exactly $A \cup (\mathfrak{N}_\alpha \cap B)$, and the diagram $\mathcal{D}^{f,\mathfrak{N}_\alpha}$ has this form, with all arrows inclusions (recall our assumption that $f$ itself was an inclusion):
\[\begin{tikzcd}
	A &&&&& B \\
	\\
	&& \begin{array}{c} P_\alpha:= \\ A \cup (\mathfrak{N}_\alpha \cap B) \end{array} \\
	\\
	{\mathfrak{N}_\alpha \cap A} &&&&& {\mathfrak{N}_\alpha \cap B}
	\arrow["f", from=1-1, to=1-6]
	\arrow["{\widetilde{f \restriction \mathfrak{N}_\alpha}}"', curve={height=18pt}, from=1-1, to=3-3]
	\arrow["{r^{f,\mathfrak{N}_\alpha}}"', from=3-3, to=1-6]
	\arrow["\ulcorner"{anchor=center, pos=0.125, rotate=-90}, draw=none, from=3-3, to=5-1]
	\arrow[from=5-1, to=1-1]
	\arrow["{f \restriction \mathfrak{N}_\alpha}"', from=5-1, to=5-6]
	\arrow[from=5-6, to=1-6]
	\arrow[curve={height=-18pt}, from=5-6, to=3-3]
\end{tikzcd}\]


We noted above that if $\alpha < \lambda$ then since $f$ and $\mathfrak{N}_\alpha$ are both elements of $\mathfrak{N}_{\alpha+1}$, it makes sense to form the diagram $\mathcal{D}^{r^{f,\mathfrak{N}_\alpha}, \mathfrak{N}_{\alpha+1}}$.  This diagram is displayed below, with 
\[
\tau^{\alpha+1}:=r^{r^{f,\mathfrak{N}_\alpha}, \mathfrak{N}_{\alpha+1}} \text{ and } 
\varphi^{\alpha,\alpha+1}:= \widetilde{r^{f,\mathfrak{N}_\alpha} \restriction \mathfrak{N}_{\alpha+1}},
\]
and all arrows are inclusions:

\[\begin{tikzcd}
	{P_\alpha} &&&&& B \\
	\\
	&&& \begin{array}{c} P_\alpha \cup (\mathfrak{N}_{\alpha+1} \cap B) \\ = A \cup (\mathfrak{N}_\alpha \cap B) \\ \cup (\mathfrak{N}_{\alpha+1} \cap B) \\=A \cup (\mathfrak{N}_{\alpha+1} \cap B) \\=P_{\alpha+1} \end{array} \\
	\\
	{\mathfrak{N}_{\alpha+1} \cap P_\alpha} &&&&& {\mathfrak{N}_{\alpha+1} \cap B}
	\arrow["{r^{f,\mathfrak{N}_\alpha}}", from=1-1, to=1-6]
	\arrow["{\varphi^{\alpha,\alpha+1}}"{description}, curve={height=18pt}, from=1-1, to=3-4]
	\arrow["{\tau^{\alpha+1}}"', from=3-4, to=1-6]
	\arrow["\ulcorner"{anchor=center, pos=0.125, rotate=-90}, draw=none, from=3-4, to=5-1]
	\arrow[from=5-1, to=1-1]
	\arrow["{r^{f,\mathfrak{N}_\alpha} \restriction \mathfrak{N}_{\alpha+1}}"', from=5-1, to=5-6]
	\arrow[from=5-6, to=1-6]
	\arrow[curve={height=-18pt}, from=5-6, to=3-4]
\end{tikzcd}\]

Notice that the pushout in the middle of that diagram, i.e., the act $A \cup (\mathfrak{N}_{\alpha+1} \cap B)$, is identical to the pushout in the diagram $\mathcal{D}^{f,\mathfrak{N}_{\alpha+1}}$.  So $\tau^{\alpha+1}$ is the same inclusion as $r^{f,\mathfrak{N}_{\alpha+1}}$.



For $\alpha \le \gamma \le \text{cf}(\lambda)$ define $\varphi^{\alpha,\gamma}$ to be the inclusion $P_\alpha \subseteq P_\gamma$, and define $\tau^{\alpha}$ to be the inclusion $P_\alpha \subseteq B$.  Note this coheres with the previous definitions in the particular case $\gamma = \alpha+1$.  Continuity of the $\vec{\mathfrak{N}}$ sequence ensures that for limit ordinals $\gamma$, 
\[
P_\gamma = A \cup (\mathfrak{N}_\gamma \cap B) = A \cup \left( \left( \bigcup_{\zeta < \gamma } \mathfrak{N}_\zeta \right) \cap B \right) = A \cup \bigcup_{\zeta < \gamma} (\mathfrak{N}_\zeta \cap B)= \bigcup_{\zeta < \gamma} \left( A \cup (\mathfrak{N}_\zeta \cap B) \right) =\bigcup_{\zeta < \gamma} P_\zeta
\]
Hence the inclusion $\varphi_{0,\text{cf}(\lambda)}: P_0 \to P_{\text{cf}(\lambda)}$ is a transfinite composition of the inclusions $\varphi^{\alpha,\alpha+1}: P_\alpha \subseteq P_{\alpha+1}$.  Note also that 
\[
f \restriction \mathfrak{N}_{\text{cf}(\lambda)} = f \text{ and } r^{f,\mathfrak{N}_{\text{cf}(\lambda)}} = \text{id}_B
\]
because $B \subset \mathfrak{N}_{\text{cf}(\lambda)}$.  Hence, $f$ can be viewed as the composition 
\[
f = \varphi^{0,\text{cf}(\lambda)} \circ \widetilde{f \restriction \mathfrak{N}_0}
\]
as displayed in the following commutative diagram:

\[\begin{tikzcd}
	A &&&&&&&& B \\
	& {P_0} \\
	&& {P_1} \\
	&&& {P_\alpha} \\
	&&&& {P_{\alpha+1}} \\
	\\
	\\
	\\
	&&&&&&&& {P_{\text{cf}(\lambda)}=B}
	\arrow["f", from=1-1, to=1-9]
	\arrow["{\widetilde{f \restriction \mathfrak{N}_0}}"', from=1-1, to=2-2]
	\arrow["{r^{f,\mathfrak{N}_0}}"{description}, from=2-2, to=1-9]
	\arrow["{\varphi^{0,1}}"', from=2-2, to=3-3]
	\arrow["{r^{f,\mathfrak{N}_1}}"{description}, from=3-3, to=1-9]
	\arrow["{\varphi^{1,\alpha}}"', from=3-3, to=4-4]
	\arrow["{r^{f,\mathfrak{N}_\alpha}}"{description}, from=4-4, to=1-9]
	\arrow["{\varphi^{\alpha,\alpha+1}}"', from=4-4, to=5-5]
	\arrow["{r^{f,\mathfrak{N}_{\alpha+1}}}"', from=5-5, to=1-9]
	\arrow["{\varphi^{\alpha+1,\text{cf}(\lambda)}}"', from=5-5, to=9-9]
	\arrow["\begin{array}{c} r^{f,\mathfrak{N}_{\text{cf}(\lambda)}} \\=\text{id}_B \end{array}"', from=9-9, to=1-9]
\end{tikzcd}\]

Recall that for each $\alpha < \text{cf}(\lambda)$, the inclusion $\varphi^{\alpha,\alpha+1}: P_\alpha \subset P_{\alpha+1}$ is a pushout of $r^{f,\mathfrak{N}_\alpha} \restriction \mathfrak{N}_{\alpha+1}$.  And by \eqref{eq_quot_in_M}, $r^{f,\mathfrak{N}_\alpha} \restriction \mathfrak{N}_{\alpha+1}$ is a member of $\mathcal{M}$; moreover, its codomain is $\mathfrak{N}_{\alpha+1} \cap B$, which is of size $\le |\mathfrak{N}_{\alpha+1}| < \lambda$.  So the induction hypothesis ensures that
\begin{equation}
\forall \alpha < \text{cf}(\lambda) \ \ \varphi^{\alpha,\alpha+1} \in \text{cell}(\mathcal{M}^{<\kappa}).
\end{equation}

Also, the very first step $\widetilde{f \restriction \mathfrak{N}_0}$ of the factorization of $f$ is a pushout of $f \restriction \mathfrak{N}_0$, which is a member of $\mathcal{M}$ by \eqref{eq_rest_in_M} and has codomain of size $\le |\mathfrak{N}_0| < \lambda$.  So $\widetilde{f \restriction \mathfrak{N}_0}$ is also in $\text{cell}(\mathcal{M}^{<\kappa})$ by the induction hypothesis.

This concludes the proof that $f \in \text{cell}(\mathcal{M}^{<\kappa})$.

\ref{item_CellGen} $\implies$ \ref{item_AE}:  Suppose $\mathcal{M}$ is cellularly generated.  Then $\mathcal{M} = \text{cell}(\mathcal{M}_0)$ for some sub\textbf{set} $\mathcal{M}_0$ of $\mathcal{M}$.  Since $\mathcal{M}$ consists of monomorphisms by assumption, we can without loss of generality (enlarging $\mathcal{M}_0$ if necessary) assume $\mathcal{M} = \text{cell}\left( \mathcal{M}^{<\kappa} \right)$ for some regular $\kappa  > |S| + \aleph_0$, where $\mathcal{M}^{<\kappa}$ is a representative set of those members of $\mathcal{M}$ whose codomain has cardinality $<\kappa$.  We claim that the parameter
\[
p:= \big( \mathcal{M}^{<\kappa}, \kappa \big)
\]
witnesses that $\mathcal{M}$ is $\kappa$-a.e.\ effective.  To prove it, assume
\[
S \cup \{ S, (\mathcal{M}^{<\kappa}, \kappa) \} \subset \mathfrak{N} \prec_* (V,\in)
\]
and $\mathfrak{N} \cap \kappa$ is transitive.  Suppose $f: A \to B$ is a member of $ \text{cell}(\mathcal{M}^{<\kappa})$ with $f \in \mathfrak{N}$; we need to prove that $f \restriction \mathfrak{N}$ and $r^{f,\mathfrak{N}}$ are both members of $\text{cell}(\mathcal{M}^{<\kappa})$.  Since $f \in \text{cell}(\mathcal{M}^{<\kappa})$ there is a transfinite composition
\[
\vec{f} = \langle f_{\alpha,\beta}:A_\alpha \to A_\beta \ : \ \alpha \le \beta \le \mu \rangle
\]
such that $f = f_{0,\mu}$ and for each $\alpha < \mu$, $f_{\alpha,\alpha+1}$ appears in a pushout square $\mathcal{S}_\alpha$ of the following form:
 
\begin{equation}\label{eq_PO_Salpha}
\mathcal{S}_\alpha: \ \ \ \ 
\begin{tikzcd}
	{D_\alpha} && {C_\alpha} \\
	\\
	{A_\alpha} && {A_{\alpha+1}}
	\arrow["{m_\alpha \in \mathcal{M}^{<\kappa}}", from=1-1, to=1-3]
	\arrow["{\varphi_\alpha}"', from=1-1, to=3-1]
	\arrow["{\psi_\alpha}", from=1-3, to=3-3]
	\arrow["{f_{\alpha,\alpha+1}}"', from=3-1, to=3-3]
	\arrow["\ulcorner"{anchor=center, pos=0.125, rotate=180}, draw=none, from=3-3, to=1-1]
\end{tikzcd}
\end{equation}

By elementarity of $\mathfrak{N}$ and the assumption that $f$ and $\mathcal{M}^{<\kappa}$ are both elements of $ \mathfrak{N}$, we can take the sequences $\vec{f}$ and $\vec{\mathcal{S}}$ to both be \emph{elements} of $\mathfrak{N}$.  Observe also that if $\alpha \in \mathfrak{N}$, then $\mathcal{S}_\alpha \in \mathfrak{N}$ too, and hence both $D_\alpha$ and $C_\alpha$ are $<\kappa$-sized elements of $\mathfrak{N}$.  Then by Fact \ref{fact_element_subset} they're both subsets of $\mathfrak{N}$.  Then, since $\varphi_\alpha$ and $\psi_\alpha$ are elements of $\mathfrak{N}$ and their domains are contained in $\mathfrak{N}$, their images are contained in $\mathfrak{N}$ too.  To summarize:
\begin{equation}\label{eq_summary_alpha_in_N}
\alpha \in \mathfrak{N} \cap \mu \ \implies \ C_\alpha, \ D_\alpha, \text{im} \varphi_\alpha, \text{ and } \text{im} \psi_\alpha \text{ are all subsets of } \mathfrak{N}.
\end{equation} 

To see that $f_{0,\mu} \restriction \mathfrak{N}$ is in $\text{cell}(\mathcal{M}^{<\kappa})$:  the continuity of $\vec{f}$ and elementarity of $\mathfrak{N}$ imply that $f_{0,\mu} \restriction \mathfrak{N}$ is the colimit of the continuous system
\[
\vec{f} \restriction \mathfrak{N}:= \langle f_{\alpha,\beta} \restriction (\mathfrak{N} \cap A_\alpha) \to (\mathfrak{N} \cap A_\beta) \ : \ \alpha \le \beta \text{ are both elements of } \mathfrak{N} \rangle
\]
indexed by $\mathfrak{N} \cap \mu$.  We need to prove that its cells consist of pushouts of members of $\mathcal{M}^{<\kappa}$.  Suppose $\alpha \in \mathfrak{N} \cap \mu$.  Then by \eqref{eq_summary_alpha_in_N}, the square $\mathcal{S}_\alpha$ factors as:

\[\begin{tikzcd}
	\begin{array}{c} D_\alpha \\(=\mathfrak{N} \cap D_\alpha) \end{array} && \begin{array}{c} C_\alpha \\ (=\mathfrak{N} \cap C_\alpha) \end{array} \\
	\\
	{\mathfrak{N} \cap A_\alpha} && {\mathfrak{N} \cap A_{\alpha+1}} \\
	\\
	{A_\alpha} && {A_{\alpha+1}}
	\arrow["{m_\alpha \in \mathcal{M}^{<\kappa}}", from=1-1, to=1-3]
	\arrow[from=1-1, to=3-1]
	\arrow["{\varphi_\alpha}"', curve={height=30pt}, from=1-1, to=5-1]
	\arrow[from=1-3, to=3-3]
	\arrow["{\psi_\alpha}", curve={height=-30pt}, from=1-3, to=5-3]
	\arrow["{f_{\alpha,\alpha+1} \restriction \mathfrak{N}}"', from=3-1, to=3-3]
	\arrow["\subseteq", from=3-1, to=5-1]
	\arrow["\subseteq"', from=3-3, to=5-3]
	\arrow["{f_{\alpha,\alpha+1}}"', from=5-1, to=5-3]
\end{tikzcd}\]

The top square is just the restriction of the pushout square $\mathcal{S}_\alpha$ to $\mathfrak{N}$, and hence is also a pushout square, by Lemma \ref{lem_restrictColimit}.  This completes the proof that $f \restriction \mathfrak{N} \in \text{cell}\left( \mathcal{M}^{<\kappa} \right)$.

Now we prove that $r^{f_{0,\mu},\mathfrak{N}}$ is in $\text{cell} \left( \mathcal{M}^{<\kappa} \right)$.  Since $\mathcal{M}$ consists of monomorphisms, we can without loss of generality assume each $f_{\alpha,\alpha+1}$ is an inclusion.  Then we can view $r^{f_{0,\mu}, \mathfrak{N}}$ as the inclusion from $A_0 \cup (\mathfrak{N} \cap A_\mu)$ into $A_\mu$, which factors via intermediate acts $Q_\alpha$ as displayed below:
\[\begin{tikzcd}
	{A_0} &&&& {A_\mu} \\
	&& \begin{array}{c} Q_\alpha:=A_\alpha \cup (\mathfrak{N} \cap A_\mu) \\ (\alpha \le \mu) \end{array} \\
	& {A_0 \cup(\mathfrak{N} \cap A_\mu)} \\
	{\mathfrak{N} \cap A_0} &&&& {\mathfrak{N} \cap A_\mu}
	\arrow["{f_{0,\mu}  (\subset)}", from=1-1, to=1-5]
	\arrow["\subset"', from=1-1, to=3-2]
	\arrow["\subset"', from=2-3, to=1-5]
	\arrow["\subset"', from=3-2, to=2-3]
	\arrow["\ulcorner"{anchor=center, pos=0.125, rotate=-90}, draw=none, from=3-2, to=4-1]
	\arrow["\subset", from=4-1, to=1-1]
	\arrow["{f_{0,\mu} \restriction \mathfrak{N}}"', from=4-1, to=4-5]
	\arrow["\subset"', from=4-5, to=1-5]
	\arrow[from=4-5, to=3-2]
\end{tikzcd}\]

We need to prove that each inclusion $Q_\alpha \to Q_{\alpha+1}$ is a pushout of a member of $\mathcal{M}^{<\kappa}$.  There are two cases.

\textbf{Case 1: $\alpha \in \mathfrak{N}$.}  Then by \eqref{eq_summary_alpha_in_N}, $\text{im} \psi_\alpha \subseteq \mathfrak{N}$.  Since $A_{\alpha+1}$ is the pushout of the diagram $\mathcal{S}_\alpha$ from \eqref{eq_PO_Salpha}, and since a pushout in a category of $S$-acts is contained in the union of the maps pointing to it (\cite[p.54]{MR3249868}),
 \[
A_{\alpha+1} = A_\alpha \cup \text{im} \psi_\alpha \subseteq A_\alpha \cup (\mathfrak{N} \cap A_\mu). 
 \]
Hence, $Q_\alpha = Q_{\alpha+1}$ in this case, so the inclusion is simply the identity.  If $\mathcal{M}$ contains all isomorphisms we are done; otherwise this cell is redundant and can simply be deleted from the transfinite composition.

\textbf{Case 2:  $\alpha \notin \mathfrak{N}$.}  Then by elementarity of $\mathfrak{N}$, $\alpha+1 \notin \mathfrak{N}$ too (since $\alpha$, the ordinal predecessor of $\alpha+1$, is definable from the parameter $\alpha+1$ in $(V,\in)$).  Suppose $x \in \mathfrak{N} \cap A_{\alpha+1}$.  Then $x \in \mathfrak{N} \cap A_\mu$, so the least ordinal $\zeta_x$ such that $x \in A_{\zeta_x}$ is an element of $\mathfrak{N}$, by elementarity.  Then $\zeta_x \le \alpha+1$ because $x \in A_{\alpha+1}$, but in fact $\zeta_x < \alpha$ because neither $\alpha$ nor $\alpha+1$ are elements of $\mathfrak{N}$, while $\zeta_x \in \mathfrak{N}$.  In particular, $x \in A_\alpha$.  To summarize: our case assumption implies
 \begin{equation}\label{eq_SameIntersect}
 \mathfrak{N} \cap A_\alpha = \mathfrak{N} \cap A_{\alpha+1}.
 \end{equation}
 
Consider the commutative diagram below, whose first square is the pushout $\mathcal{S}_\alpha$:
\[\begin{tikzcd}
	{D_\alpha} && {A_{\alpha+1}} && \begin{array}{c} Q_{\alpha+1} \\= A_{\alpha+1} \cup (\mathfrak{N} \cap A_\mu) \end{array} \\
	\\
	{C_\alpha} && {A_\alpha} && \begin{array}{c} Q_\alpha\\=A_\alpha \cup (\mathfrak{N} \cap A_\mu) \end{array}
	\arrow["{\psi_\alpha}", from=1-1, to=1-3]
	\arrow["\subset"', from=1-3, to=1-5]
	\arrow["\ulcorner"{anchor=center, pos=0.125, rotate=-90}, draw=none, from=1-3, to=3-1]
	\arrow["{m_\alpha \in \mathcal{M}^{<\kappa}}", from=3-1, to=1-1]
	\arrow["{\varphi_\alpha}"', from=3-1, to=3-3]
	\arrow["\subset"', from=3-3, to=1-3]
	\arrow["\subset"', from=3-3, to=3-5]
	\arrow["\subset"', from=3-5, to=1-5]
\end{tikzcd}\]
We claim the outer rectangle is a pushout.  This will complete the proof, since it will imply the inclusion $Q_\alpha \to Q_{\alpha+1}$ is a pushout of $m_\alpha$.  

By the pasting lemma it suffices to prove that the right square is a pushout.   Assume $k:A_{\alpha+1} \to Y$ and $h: Q_\alpha \to Y$ are morphisms that commute with the inclusions $A_\alpha \subset Q_\alpha$ and $A_\alpha \subset A_{\alpha+1}$.  We claim there is a unique $\tau$ making the following diagram commute:

\[\begin{tikzcd}
	&&& Y \\
	{A_{\alpha+1}} && \begin{array}{c} Q_{\alpha+1} \\= A_{\alpha+1} \cup (\mathfrak{N} \cap A_\mu) \end{array} \\
	\\
	{A_\alpha} && \begin{array}{c} Q_\alpha\\=A_\alpha \cup (\mathfrak{N} \cap A_\mu) \end{array}
	\arrow["k", from=2-1, to=1-4]
	\arrow["\subset"', from=2-1, to=2-3]
	\arrow["\tau"{description}, dashed, from=2-3, to=1-4]
	\arrow["\subset"', from=4-1, to=2-1]
	\arrow["\subset"', from=4-1, to=4-3]
	\arrow["h"', from=4-3, to=1-4]
	\arrow["\subset"', from=4-3, to=2-3]
\end{tikzcd}\]

Clearly there is at most one  possibility for $\tau$, namely, $\tau(x) = k(x)$ for $x \in A_{\alpha+1}$, and $\tau(x) = h(x)$ for $y \in \mathfrak{N} \cap A_\mu$.  We need to show this is a well-defined function (if it is, then easily it is a morphism).  Suppose $x \in A_{\alpha+1} \cap (\mathfrak{N} \cap A_\mu)$.  Then by \eqref{eq_SameIntersect}, $x \in \mathfrak{N} \cap A_\alpha$; hence, $h(x) = k(x)$ by commutativity of the outer rectangle.

\end{proof}

\begin{corollary}\label{cor_IntersectSetMany}
Suppose $\langle \mathcal{M}_i \ : \ i \in I \rangle$ is a set-indexed collection of cellularly-generated classes of monomorphisms in $\overline{\text{Act}}$-$S$ (or Act-$S$, or Act$_0$-$S$) each containing all isomorphisms.  Then $\bigcap_{i \in I} \mathcal{M}_i$ is cellularly-generated.\footnote{If $I$ is infinite, to avoid metamathematical issues one needs to assume the intersection is first-order definable in $(V,\in)$.  This will hold, e.g., if there is a uniform definition of the $\mathcal{M}_i$'s. }
\end{corollary}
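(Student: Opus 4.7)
The plan is to apply Theorem \ref{thm_NewChar}, so it suffices to show that $\mathcal{M} := \bigcap_{i \in I} \mathcal{M}_i$ is a cellularly-closed class of monomorphisms that is almost-everywhere effective. Cellular closure is immediate: each $\mathcal{M}_i$ is closed under pushouts and transfinite compositions, and both properties pass to intersections. (The closure under isomorphisms assumption on each $\mathcal{M}_i$ ensures that $\mathcal{M}$ contains identities, which is needed in the transfinite composition case.) Of course $\mathcal{M}$ consists of monomorphisms. So the real work is to verify almost-everywhere effectiveness of $\mathcal{M}$.

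For each $i \in I$, Theorem \ref{thm_NewChar} applied to $\mathcal{M}_i$ supplies a regular cardinal $\kappa_i > |S|$ and a parameter $p_i$ witnessing that $\mathcal{M}_i$ is $\kappa_i$-a.e.\ effective. Because $I$ is a set, we may form $p := \langle p_i \ : \ i \in I \rangle$ and pick a single regular cardinal $\kappa > |I| + \sup_{i \in I} \kappa_i$. We claim that the parameter $p$ together with $\kappa$ witnesses $\kappa$-a.e.\ effectiveness of $\mathcal{M}$ (at a uniform $\Sigma_n$ level, which exists by the first-order definability assumption mentioned in the footnote and the fact that $I$ is a set, so all the individual complexities are bounded by some $n$).

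To check this, suppose $\mathfrak{N} \prec_* (V, \in)$ with $S \cup \{S, p\} \subset \mathfrak{N}$ and $\mathfrak{N} \cap \kappa$ transitive. Since $p \in \mathfrak{N}$ and $|I| < \kappa$, Fact \ref{fact_element_subset} gives $I \subseteq \mathfrak{N}$, and then by elementarity $p_i \in \mathfrak{N}$ for each $i \in I$. Moreover, since $\kappa_i < \kappa$ and $\mathfrak{N} \cap \kappa$ is transitive, $\mathfrak{N} \cap \kappa_i$ is transitive as well. Now let $f \in \mathfrak{N} \cap \mathcal{M}$ be arbitrary. Then for every $i \in I$ we have $f \in \mathfrak{N} \cap \mathcal{M}_i$, and the hypotheses of $\kappa_i$-a.e.\ effectiveness of $\mathcal{M}_i$ are satisfied, so both $f \restriction \mathfrak{N}$ and $r^{f, \mathfrak{N}}$ lie in $\mathcal{M}_i$. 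Since this holds for every $i \in I$, both maps lie in $\mathcal{M}$.

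Hence $\mathcal{M}$ is $\kappa$-a.e.\ effective, and Theorem \ref{thm_NewChar} concludes that $\mathcal{M}$ is cellularly generated. The main subtlety, and the reason the footnote is needed when $I$ is infinite, is guaranteeing a single $\Sigma_n$ complexity level that simultaneously captures the defining formulas of each $\mathcal{M}_i$ together with that of $\bigcap_i \mathcal{M}_i$; a uniform definition of the family $\langle \mathcal{M}_i \rangle_{i \in I}$ (for instance, a formula with parameter a single object encoding the entire family) supplies such an $n$ directly.
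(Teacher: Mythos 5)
Your proposal is correct and follows essentially the same route as the paper: extract a witnessing cardinal $\kappa_i$ and parameter $p_i$ for each $\mathcal{M}_i$ from the \ref{item_CellGen} $\implies$ \ref{item_AE} direction of Theorem \ref{thm_NewChar}, bundle the parameters into $\vec{p}=\langle p_i : i\in I\rangle$, choose one regular $\kappa$ above $|I|$ and all the $\kappa_i$, use Fact \ref{fact_element_subset} and elementarity to get each $p_i$ into $\mathfrak{N}$, and verify a.e.\ effectiveness of the intersection coordinatewise. The only cosmetic difference is that the paper first upgrades each $\mathcal{M}_i$ to $\kappa$-a.e.\ effectiveness via the monotonicity remark after Definition \ref{def_EffectiveSubobj}, while you apply $\kappa_i$-a.e.\ effectiveness directly after noting $\mathfrak{N}\cap\kappa_i$ is transitive; these are the same observation.
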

\begin{proof}
By the \ref{item_CellGen} $\implies$ \ref{item_AE} direction of Theorem \ref{thm_NewChar}, each $\mathcal{M}_i$ is $\kappa_i$-a.e.\ effective for some $\kappa_i$.   Since $I$ is a set, there is a regular $\kappa$ strictly larger than all the $\kappa_i$'s and also larger than $ |I| + \aleph_0$.  So for each $i \in I$, $\mathcal{M}_i$ is $\kappa$-a.e.\ effective.  So, using the notation introduced after Definition \ref{def_EffectiveSubobj}:

\begin{equation}\label{eq_kappa_i_eff_all_i}
\forall i \in I \ \ (\text{aa}^*_\kappa \ \mathfrak{N}) \ \left( \forall f \in \mathfrak{N} \cap \mathcal{M}_i \right) \left( f \restriction \mathfrak{N} \in \mathcal{M}_i \text{ and } r^{f,\mathfrak{N}} \in \mathcal{M}_i \right).
\end{equation}
Recall the $\text{aa}_\kappa^* \mathfrak{N}$ quantifier here means that there is a parameter $p_i$ such that the rest of the statement holds whenever $S \cup \{ S,p_i \} \subset \mathfrak{N} \prec_* (V,\in)$ and $\mathfrak{N} \cap \kappa$ is transitive. 

Let $\vec{p} = \langle p_i \ : \ i \in I \rangle$; we claim that $\vec{p}$ and $\kappa$ witness that $\bigcap_{i \in I} \mathcal{M}_i$ is $\kappa$-a.e.\ effective.  Suppose $S \cup \{ S, \vec{p} \} \subset \mathfrak{N} \prec_* (V,\in)$ and $\mathfrak{N} \cap \kappa$ is transitive, and consider any $f \in \mathfrak{N} \cap \bigcap_{i \in I} \mathcal{M}_i$.  Since $I= \text{dom}(\vec{p})$ is a $<\kappa$-sized element of $\mathfrak{N}$ and $\mathfrak{N} \cap \kappa$ is transitive, $I$ is also a subset of $\mathfrak{N}$ by Fact \ref{fact_element_subset}.  Then $\vec{p}(i)=p_i$ is an element of $\mathfrak{N}$ by elementarity, for each $i \in I$.  So by \eqref{eq_kappa_i_eff_all_i} we conclude that for each $i$, $f \restriction \mathfrak{N}$ and $r^{f,\mathfrak{N}}$ are both in $\mathcal{M}_i$.  Therefore $f \restriction \mathfrak{N}$ and $r^{f,\mathfrak{N}}$ are both in $\bigcap_{i \in I} \mathcal{M}_i$.
\end{proof}

\section{Proof of Theorems \ref{thm_Fmono_cofibgen} and \ref{thm_LO}}

A monomorphism $f:X \to Y$ of (right) $S$-acts is \textbf{pure} if every finite system of equations with parameters from $f(X)$ that is solvable in $Y$, is also solvable in $f(X)$.  $f$ is \textbf{stable} if for every homomorphism $\lambda: A \to B$ of left $S$-acts, 
\[
\text{im} (f \otimes \text{id}_B) \cap \text{im} (\text{id}_Y \otimes \lambda) \subseteq \text{im}(f \otimes \lambda).
\]
Purity implies stability.

\begin{lemma}\label{lem_ElementaryIsPure}
Suppose $S$ is a monoid and 
\[
S \cup \{S \} \subset \mathfrak{N} \prec_* (V,\in).
\]
Then for any $S$-act $A$ with $A \in \mathfrak{N}$, the inclusion $\mathfrak{N} \cap A \to A$ is pure (and hence stable). 
\end{lemma}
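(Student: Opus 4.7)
The plan is to unwind the definition of purity and appeal directly to elementarity. A monomorphism $f : X \hookrightarrow Y$ is pure iff every finite system $\Sigma$ of equations with parameters in $f(X)$ that is solvable in $Y$ is already solvable in $f(X)$. In our setting a finite system $\Sigma$ in variables $x_1,\dots,x_k$ with parameters in $\mathfrak{N}\cap A$ is coded as a finite set of atomic equations, each of the form $x_i s = x_j t$ or $x_i s = a$, where $s,t \in S$ and $a \in \mathfrak{N}\cap A$.

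The key observation is that such a $\Sigma$ is automatically an element (not merely a subset) of $\mathfrak{N}$: it is a finite set, all of whose entries are elements of $\mathfrak{N}$ (using $S\subset\mathfrak{N}$ and the fact that the finitely many act-parameters were chosen from $\mathfrak{N}\cap A$), and finite sets definable from their elements are always in $\mathfrak{N}$ by elementarity. Assume $\Sigma$ is solvable in $A$. Then the first-order statement
\[
\exists x_1,\dots,x_k \in A \ \bigwedge_{\sigma \in \Sigma} \sigma
\]
holds in $(V,\in)$ with parameters $A$, $S$, and $\Sigma$, all of which lie in $\mathfrak{N}$. Applying $\mathfrak{N} \prec_* (V,\in)$, the same existential statement holds in $\mathfrak{N}$, so there exist $x_1,\dots,x_k \in \mathfrak{N}$ — and hence in $\mathfrak{N}\cap A$, since the action on $A$ is inherited by $\mathfrak{N}\cap A$ — that witness solvability of $\Sigma$ inside $\mathfrak{N}\cap A$. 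This proves that $\mathfrak{N}\cap A \hookrightarrow A$ is pure, and stability is then immediate from the stated implication ``purity $\Rightarrow$ stability''.

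The only thing worth checking carefully is that the existential assertion displayed above is of sufficiently low Levy complexity to be reflected by $\mathfrak{N} \prec_* (V,\in)$; this is handled automatically by the paper's convention that $\prec_*$ means $\prec_n$ for all $n$ needed in the paper, since the statement is bounded (the witnesses are required to lie in the set $A \in \mathfrak{N}$ and the conjunction ranges over the finite set $\Sigma \in \mathfrak{N}$), essentially $\Sigma_0$ in the relevant parameters. There is no real obstacle here, just bookkeeping; the proof is a clean application of downward elementarity of $\mathfrak{N}$ to the finitary first-order definition of purity.
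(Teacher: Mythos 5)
Your proof is correct and is essentially the paper's argument spelled out in detail: the paper simply observes that $S \cup \{S\} \subset \mathfrak{N} \prec_* (V,\in)$ makes $\mathfrak{N}\cap A$ an elementary subact of $A$ in the language of $S$-acts (hence pure), and your unwinding of that into the reflection of finite existential systems of equations with parameters in $\mathfrak{N}\cap A$ is exactly the verification the paper leaves implicit (citing the analogous module-case lemma).
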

\begin{proof}
The fact that $S$ is both an element and subset of $\mathfrak{N}$, together with the assumption that $\mathfrak{N} \prec_* (V,\in)$, ensure that $\mathfrak{N} \cap A$ is an elementary subact of $A$ in the language of $S$-acts (which ensures purity).  The proof is similar to the module case in Lemma 2.4 of \cite{Cox_ApproxSurvey}.  
\end{proof}

\begin{lemma}\label{lem_Renshaw}\cite[Lemma 2.1]{MR1899443}
Let $f: X \to Y$ be a monomorphism of right nonempty $S$-acts.
\begin{enumerate}
 \item\label{item_FlatQuotImplies} If $Y/X$ is flat then $S$ is right-reversible and $f$ is stable.\footnote{Lemma 2.1 of  \cite{MR1899443} refers to  \cite[Corollary 4.9]{MR0829389} for the proof, but the statement of the latter includes the background assumption that $Y$ is flat, even for part \eqref{item_FlatQuotImplies}.  This background assumption is unnecessary for this particular claim, since the proof of the ``only if" direction of Theorem 4.8 of \cite{MR0829389} does not use the assumption that $1 \otimes \lambda$ is one-to-one (that assumption is only used for the ``if" direction of Theorem 4.8).  We thank James Renshaw for clarification on this issue.}

 \item If $Y$ is flat, $S$ is right-reversible, and $f$ is stable, then $Y/X$ is flat.\footnote{We caution that the statement of the corresponding lemma in \cite{MR3249868} has a misprint; the third sentence of Lemma 1.3 of \cite{MR3249868} should have, as an assumption, that the monoid is right-reversible.}
\end{enumerate}
\end{lemma}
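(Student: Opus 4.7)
The plan is to follow the tensor-product characterization of flatness for monoid acts, essentially as in Bulman-Fleming--McDowell~\cite{MR0829389}, with the caveat the footnote already identifies (that in part (1) one does not need to assume \(Y\) is flat).

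For part (1), assume \(Y/X\) is flat (and \(X\) nonempty, else \(Y/X\cong Y\) and the statement is vacuous). Let \(z:=[X]\in Y/X\); this is a fixed point of the \(S\)-action, so \(zs=zt\) for all \(s,t\in S\). Consider the inclusion of left \(S\)-acts \(\iota:Ss\cup St\hookrightarrow S\). Flatness of \(Y/X\) makes \((Y/X)\otimes\iota\) injective; since \(z\otimes s=zs\otimes 1=zt\otimes 1=z\otimes t\) already in \((Y/X)\otimes S\), unwinding the standard description of tensor products of acts (as a quotient by the equivalence relation generated by the rewriting rules \(xs\otimes b\sim x\otimes sb\)) forces the existence of \(u,v\in S\) with \(su=tv\), i.e.\ \(Ss\cap St\neq\emptyset\). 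For stability, fix a left \(S\)-act homomorphism \(\lambda:A\to B\) and consider the commutative diagram whose rows are \(X\otimes A\to Y\otimes A\to(Y/X)\otimes A\) and similarly for \(B\), with vertical arrows \(\otimes\lambda\). Flatness of \(Y/X\) makes the right vertical map injective. A short diagram chase then shows that any element of \(\text{im}(f\otimes\text{id}_B)\cap\text{im}(\text{id}_Y\otimes\lambda)\) must already lie in \(\text{im}(f\otimes\lambda)\), which is the stability condition.

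For part (2), assume \(Y\) is flat, \(S\) is right-reversible, and \(f\) is stable. Given a monomorphism \(\mu:A\to B\) of left \(S\)-acts, I want \((Y/X)\otimes\mu\) to be injective. First identify \((Y/X)\otimes A\) with the Rees quotient \((Y\otimes A)/\text{im}(f\otimes\text{id}_A)\) (right-reversibility is what makes this identification well-behaved) and similarly for \(B\). Flatness of \(Y\) gives injectivity of \(Y\otimes\mu\). Now suppose two elements of \((Y/X)\otimes A\) are identified by \((Y/X)\otimes\mu\); lift them to elements of \(Y\otimes A\). Their images under \(Y\otimes\mu\) in \(Y\otimes B\) can differ only within \(\text{im}(f\otimes\text{id}_B)\). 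The stability of \(f\), applied to \(\mu\), forces their preimages in \(Y\otimes A\) already to differ only within \(\text{im}(f\otimes\text{id}_A)\); hence they were already equal in \((Y/X)\otimes A\).

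The main obstacle is the identification of \((Y/X)\otimes A\) with the Rees quotient \((Y\otimes A)/\text{im}(f\otimes\text{id}_A)\): right-reversibility is precisely what makes this tensor-product description well-defined, and the stability hypothesis is exactly tailored to make the induced map on Rees quotients in part (2) injective. Once these identifications are in hand, both directions reduce to standard diagram chases over the usual presentation of the act tensor product.
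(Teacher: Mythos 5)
The paper does not actually prove this lemma; it is quoted from Lemma 2.1 of \cite{MR1899443}, whose proof is in turn Theorem 4.8 and Corollary 4.9 of \cite{MR0829389}, and the paper's only contribution is the footnoted observation that the background flatness of $Y$ can be dropped in part (1). So you are reconstructing a cited result, and you have correctly identified the source argument (tossing schemes for act tensor products). The right-reversibility claim in part (1) is essentially right, modulo a left/right slip: the tossing through $Ss\cup St$ produces $u,v$ with $us=vt$ (the \emph{left} ideals $Ss$ and $St$ must meet), not $su=tv$. Also, $X\ne\emptyset$ is a hypothesis of the lemma rather than a case you may wave off as ``vacuous'': for $X=\emptyset$ the conclusion of part (1) is false ($Y/\emptyset\simeq Y$, and flat acts exist over every monoid), which is precisely why the paper treats the $A=\emptyset$ case separately whenever it invokes this lemma.

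The two steps you describe as routine are where the content lies, and both have gaps. For stability in part (1): $\lambda$ is an arbitrary homomorphism of left acts, not a monomorphism, so flatness of $Y/X$ does not make $(Y/X)\otimes\lambda$ injective; and even after replacing $\lambda$ by the inclusion of its image, injectivity of $(Y/X)\otimes\lambda$ does not give the containment you want --- from $z\otimes b=[y]\otimes\lambda(a)$ in $(Y/X)\otimes B$ you cannot conclude that this element lies in $\text{im}\big((Y/X)\otimes\lambda\big)$, which is what a chase through your diagram would require. The actual proof is a direct analysis of tossing schemes (the ``only if'' half of Theorem 4.8 of \cite{MR0829389}). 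For part (2): since $-\otimes A$ preserves pushouts and $Y/X$ is the pushout of $\Theta_S\leftarrow X\to Y$, the act $(Y/X)\otimes A$ is the pushout of $\Theta_S\otimes A\leftarrow X\otimes A\to Y\otimes A$, and $\Theta_S\otimes A$ is the set of connected components of $A$; hence $\text{im}(f\otimes\text{id}_A)$ is collapsed to one point \emph{per component} of $A$, not to a single point, so your ``Rees quotient'' identification is wrong as stated. Establishing the correct description of this congruence --- which is where flatness of $Y$ and right-reversibility (equivalently, flatness of $\Theta_S$) actually enter --- is the substance of the ``if'' direction of Theorem 4.8; once it is in place, your concluding stability chase does finish the argument.
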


\begin{corollary}\label{cor_TraceFlat}
Suppose $S$ is a monoid, $F$ is a flat $S$-act,  \[
S \cup \{S \} \subset \mathfrak{N} \prec_* (V,\in),
\]
and $F \in \mathfrak{N}$.  Then $\mathfrak{N} \cap F$ is flat.  The quotient $\frac{F}{\mathfrak{N} \cap F}$ is flat if, and only if, $S$ is right-reversible.
\end{corollary}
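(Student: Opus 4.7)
The plan is to reduce both parts of the corollary to Lemmas \ref{lem_ElementaryIsPure} and \ref{lem_Renshaw}, with the additional ingredient that pure subacts of flat acts are flat.

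First I would observe that $\mathfrak{N} \cap F$ is nonempty. Since $F$ is (by convention) a nonempty $S$-act and $F \in \mathfrak{N}$, applying elementarity to the formula ``$\exists x \ x \in F$'' produces an element of $F$ that lies in $\mathfrak{N}$. Hence the inclusion $\iota : \mathfrak{N} \cap F \hookrightarrow F$ is a monomorphism of nonempty $S$-acts, and by Lemma \ref{lem_ElementaryIsPure} it is pure and in particular stable. The standard fact that pure subacts of flat $S$-acts are flat (which is immediate from the tensor-product definitions of purity and flatness: if $\iota$ is pure and $F$ is flat, then for any monomorphism $\lambda : A \to B$ of left $S$-acts, the composite $(\mathfrak{N}\cap F)\otimes A \to F \otimes A \to F \otimes B$ is injective, and it factors through $(\mathfrak{N}\cap F)\otimes B$) then yields flatness of $\mathfrak{N} \cap F$.

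For the quotient, both directions are direct applications of Lemma \ref{lem_Renshaw} to the monomorphism $\iota$. If $S$ is right-reversible, then since $F$ is flat and $\iota$ is stable, part (2) of Lemma \ref{lem_Renshaw} gives that $F/(\mathfrak{N} \cap F)$ is flat. Conversely, if $F/(\mathfrak{N} \cap F)$ is flat, then part (1) of Lemma \ref{lem_Renshaw} immediately gives that $S$ is right-reversible (and recovers stability of $\iota$, which we already had).

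The only genuinely non-routine step is establishing flatness of $\mathfrak{N} \cap F$; the crux is that elementarity upgrades the bare inclusion $\mathfrak{N} \cap F \subseteq F$ to a pure monomorphism via Lemma \ref{lem_ElementaryIsPure}, after which everything reduces to the two previously stated lemmas.
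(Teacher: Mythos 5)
Your proof is correct and follows essentially the same route as the paper: Lemma \ref{lem_ElementaryIsPure} gives purity of the inclusion $\mathfrak{N}\cap F \hookrightarrow F$, the standard fact that pure subacts of flat acts are flat gives the first claim, and both directions of the ``if and only if'' are read off from the two parts of Lemma \ref{lem_Renshaw} applied to this (stable) inclusion. Your explicit remarks that $\mathfrak{N}\cap F$ is nonempty and your sketch of why pure subacts of flat acts are flat are fine additions, but the argument is the paper's argument.
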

\begin{proof}
By Lemma \ref{lem_ElementaryIsPure}, $\mathfrak{N} \cap F$ is pure in $F$, and pure subacts of flat acts are flat.  Purity of the inclusion $\mathfrak{N} \cap F \to F$ implies that this inclusion is stable.  Then $F/(\mathfrak{N} \cap F)$ is flat if and only if $S$ is right-reversible by Lemma \ref{lem_Renshaw}. 
\end{proof}

\subsection{Proof of Theorem \ref{thm_Fmono_cofibgen}}
For the \eqref{item_S_rr} $\implies$ \eqref{item_F_Mono_cofibGen} direction, assume $S$ is right-reversible and $\mathcal{F}$-Mono is closed under compositions.  Then $\mathcal{F}$-Mono is cofibrantly closed (see \cite[Theorem 3.11]{MR3249868}; the proof of part 2 there works if $\mathcal{F}$-Mono is closed under compositions).  Since it is cofibrantly closed, to prove that it is cofibrantly generated it suffices to prove it is cellularly generated.  Let $\kappa$ be any regular cardinal larger than $|S|+\aleph_0$; we claim that $\mathcal{F}$-mono is $\kappa$-a.e.\ effective, which by the \ref{item_AE} $\implies$ \ref{item_CellGen} direction of Theorem \ref{thm_NewChar}, will complete the proof of this direction of Theorem \ref{thm_Fmono_cofibgen}.

We consider a single, arbitrary $\mathfrak{N}$ such that $S \cup \{S \} \subset \mathfrak{N} \prec_* (V,\in)$.  We do not actually need to assume that $\mathfrak{N} \cap \kappa$ is transitive for the current application, or to include any other parameters.   We need to prove that if $f \in \mathcal{F}$-Mono with $f \in \mathfrak{N}$, then both $f \restriction \mathfrak{N}$ and $r^{f,\mathfrak{N}}$ are in $\mathcal{F}$-Mono.  Assume without loss of generality that $f$ is an inclusion $A \subset B$, with $B/A$ flat.  Since $A \cap (\mathfrak{N} \cap B) = \mathfrak{N} \cap A$, the pushout of the span $A \leftarrow \mathfrak{N} \cap A \rightarrow \mathfrak{N} \cap B$ is exactly $A \cup (\mathfrak{N} \cap B)$; so the diagram $\mathcal{D}^{f,\mathfrak{N}}$ has the following form, with all maps inclusions:

\[\begin{tikzcd}
	A &&&&& B \\
	\\
	&& {A \cup (\mathfrak{N} \cap B)} \\
	\\
	{\mathfrak{N} \cap A} &&&&& {\mathfrak{N} \cap B}
	\arrow["f"{description}, from=1-1, to=1-6]
	\arrow[curve={height=18pt}, from=1-1, to=3-3]
	\arrow["{r^{f,\mathfrak{N}}}"{description}, from=3-3, to=1-6]
	\arrow[from=5-1, to=1-1]
	\arrow["{f \restriction \mathfrak{N}}"{description}, from=5-1, to=5-6]
	\arrow[from=5-6, to=1-6]
	\arrow[curve={height=-18pt}, from=5-6, to=3-3]
\end{tikzcd}\]

Clearly $f \restriction\mathfrak{N}$ and $r^{f,\mathfrak{N}}$ are monomorphisms, so we just need to show their Rees quotients are flat.  Since $f \in \mathfrak{N}$, its domain $A$ and codomain $B$ are elements of $\mathfrak{N}$, so by elementarity of $\mathfrak{N}$, the Rees quotient $F:=B/A$ is also an element of $\mathfrak{N}$.  By Lemma \ref{lem_HomThm},
\begin{equation}\label{eq_TraceOnQuot}
\frac{\mathfrak{N} \cap B}{\mathfrak{N} \cap A} \simeq \mathfrak{N} \cap \frac{B}{A}  = \mathfrak{N} \cap F
\end{equation}
and
\begin{equation}\label{eq_QuotOfTrace}
\frac{B}{A \cup (\mathfrak{N}\cap B)} \simeq \frac{B/A}{\mathfrak{N} \cap (B/A)} = \frac{F}{\mathfrak{N} \cap F}
\end{equation}
By Corollary \ref{cor_TraceFlat}, the right-reversibility of $S$, the assumption that $F$ is flat, and the fact that $F$ is an element of $\mathfrak{N}$, it follows that the right sides of \eqref{eq_TraceOnQuot} and \eqref{eq_QuotOfTrace} are both flat.   Hence, both $f \restriction \mathfrak{N}$ and $r^{f,\mathfrak{N}}$ are in $\mathcal{F}$-mono.

To prove the \eqref{item_F_Mono_cofibGen} $\implies$ \eqref{item_S_rr} direction of Theorem \ref{thm_Fmono_cofibgen}, assume $\mathcal{F}$-Mono is cofibrantly generated in $\overline{\text{Act}}$-$S$.  In particular, it is closed under composition, and by Fact \ref{fact_CharCofClos}, 
\[
\mathcal{F}\text{-Mono} = \text{Rt } \underbrace{\text{cell}(\mathcal{M}_0)}_{\mathcal{C}:=}
\]
for some set $\mathcal{M}_0 \subset \mathcal{F}$-Mono.  By the \ref{item_CellGen} $\implies$ \ref{item_AE} direction of Theorem \ref{thm_NewChar} there is a $\kappa$ such that $\mathcal{C}$ is $\kappa$-a.e.\ effective.  Let $p$ be the parameter witnessing this and fix any $\mathfrak{N}$ such that $\mathfrak{N} \cap \kappa$ is transitive and
\begin{equation}\label{eq_ParameterInN}
S \cup \{ S,p,f \} \subset \mathfrak{N} \prec_* (V,\in),
\end{equation}
where $f: A \to B$ is any fixed member of $\mathcal{C}$.  Now $\mathcal{C}  = \text{cell} (\mathcal{M}_0)$ and $\mathcal{M}_0$ is contained in the cellularly-closed class $\mathcal{F}$-Mono.  So, in particular, $f \in \mathcal{F}$-Mono, so $B/A$ is flat.  If $A \ne \emptyset$ then we conclude $S$ is right-reversible by Lemma \ref{lem_Renshaw}.  If $A = \emptyset$ then $B$ must be flat, and since $f: \emptyset \to B$ is a member of $\mathfrak{N} \cap \mathcal{C}$, the map $r^{f,\mathfrak{N}}$ from $P^{f,\mathfrak{N}} = \emptyset \cup (\mathfrak{N} \cap B)= \mathfrak{N} \cap B$ into $B$ is a member of $\mathcal{C}$; in particular it is in $\mathcal{F}$-Mono and hence $\frac{B}{\mathfrak{N} \cap B}$ is flat.  Now $\mathfrak{N} \cap B$ is nonempty (by elmentarity of $\mathfrak{N}$) and hence again we conclude by Lemma \ref{lem_Renshaw} that $S$ is right-reversible.

The proof of the version of Theorem \ref{thm_Fmono_cofibgen} for Act$_0$-$S$ (when $S$ has a left zero) is similar.

\subsection{Proof of Theorem \ref{thm_LO}   } 

Assume $S$ is a right-LO monoid, and $\mathcal{F}$-PureMono is the class of pure monomorphisms in $\overline{\text{Act}}$-$S$ with flat Rees quotient.  So
\[
\mathcal{F}\text{-PureMono} = \left( \mathcal{F}\text{-Mono} \right) \cap \left( \text{PureMono} \right).
\]

Now PureMono is always cofibrantly closed (\cite{MR2280436}).  And although we don't know if $\mathcal{F}$-Mono is even closed under compositions, $\mathcal{F}$-PureMono is always closed under compositions (\cite[Theorem IV.1.5]{renshaw1986flatness}).  It then follows easily that $\mathcal{F}$-PureMono is cofibrantly closed.  So to prove it is cofibrantly generated, it suffices to prove that it is cellularly generated; and by Theorem \ref{thm_NewChar} it suffices to show that $\mathcal{F}$-PureMono is a.e.\ effective.

The assumption that $S$ is right-LO implies PureMono is cofibrantly generated in $\overline{\text{Act}}$-$S$; see \cite[Corollary 2.6]{MR2280436} and \cite{MR4121092} (or \cite{Cox_et_al_CofGenPureMon}).  By Theorem \ref{thm_NewChar} there is a $\kappa$ such that PureMono is $\kappa$-a.e.\ effective (in fact, it satisfies the stronger notion of Borceux-Rosick\'y).  So there is a parameter $p$ such that whenever
\[
S \cup \{ S,p \} \subset \mathfrak{N} \prec_* (V,\in)
\]
and $\mathfrak{N} \cap \kappa$ is transitive, then for any $f: A \to B$ that is in $\mathfrak{N} \cap \text{PureMono}$, both $f \restriction \mathfrak{N}$ and $r^{f,\mathfrak{N}}$ are pure.  

If, in addition, $f$ has flat Rees quotient, then by Lemma \ref{lem_HomThm} (assuming WLOG $f$ is an inclusion) we have the isomorphisms
\[
\frac{\mathfrak{N} \cap B}{\mathfrak{N} \cap A} \simeq \mathfrak{N} \cap \frac{B}{A} 
\]
and
\[
\frac{B}{A \cup (\mathfrak{N}\cap B)} \simeq \frac{B/A}{\mathfrak{N} \cap (B/A)},
\]
The assumption that $S$ is right-LO implies, in particular, that $S$ is right-reversible.  So again, as in that proof of Theorem \ref{thm_Fmono_cofibgen}, since $B/A$ is a flat element of $\mathfrak{N}$, it follows from Corollary \ref{cor_TraceFlat} that the right sides of both of those isomorphisms are flat.  So the Rees quotients of $f \restriction \mathfrak{N}$ and $r^{f,\mathfrak{N}}$ are both flat.

This completes the proof that $\mathcal{F}$-PureMono is cofibrantly generated in $\overline{\text{Act}}$-$S$ (assuming $S$ is right-LO).  This implies FCC in Act-$S$ by Lemma \ref{lem_CellGenExpandedGivesCovers}.  The proof for Act$_0$-$S$ is similar.

\section{Proof of Theorem \ref{thm_SF}}

As mentioned in the introduction, cofibrant generation of $\mathcal{SF}$-Mono appears to be much stronger than cofibrant generation of $\mathcal{F}$-Mono, because the former implies a bound on the cardinality of the indecomposable members of $\mathcal{SF}$.  This appears to be due mainly to two facts (Lemmas \ref{lem_SF_locallyCyc} and \ref{lem_SFquot_PureInc}) that distinguish $\mathcal{SF}$ from $\mathcal{F}$.  An $S$-act $A$ is \textbf{locally cyclic} if for every $a,b \in A$ there is a $c \in A$ and $s,t \in S$ such that $cs=a$ and $ct=b$.  Clearly every locally cyclic act is indecomposable, but for strongly flat acts we have the converse:  
\begin{lemma}\label{lem_SF_locallyCyc}\cite[Theorem 3.7]{MR1839214}
For any monoid $S$, the locally cyclic strongly flat acts are the same as the indecomposable strongly flat acts.
\end{lemma}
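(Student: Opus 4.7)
The forward direction (locally cyclic implies indecomposable) is essentially immediate: if $A = B \sqcup C$ were a non-trivial decomposition and $a \in B$, $b \in C$, then any common ancestor $c$ with $cs=a$, $ct=b$ would have to lie in both summands (since subacts are closed under the action), a contradiction.

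For the converse, assume $A$ is strongly flat and indecomposable. The plan is to define the binary relation
\[
a \approx b \ \iff \ (\exists c \in A)(\exists s,t \in S) \ cs=a \text{ and } ct=b
\]
and show that it is a congruence whose classes are subacts partitioning $A$; indecomposability will then force $\approx$ to be the total relation, which is precisely local cyclicity. Reflexivity holds via $c=a$, $s=t=1$, and symmetry is trivial. The nontrivial step is transitivity, and this is where strong flatness enters: if $cs=a$, $ct=b$ witness $a \approx b$ and $c's'=b$, $c't'=d$ witness $b \approx d$, then $ct = c's'$, and Condition (P) (one of the combinatorial characterizations of strong flatness) yields $e \in A$ and $u,v \in S$ with $c = eu$, $c'=ev$, and $ut = vs'$. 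Then $e(us) = a$ and $e(vt') = d$ witnesses $a \approx d$.

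Once $\approx$ is known to be an equivalence relation, I would verify that each $\approx$-class is closed under the $S$-action: given $a \approx b$ via $(c,s,t)$ and $r \in S$, the triple $(c,sr,tr)$ witnesses $ar \approx br$; and $a \approx ar$ is witnessed by $(a,1,r)$. Hence the $\approx$-classes form a decomposition of $A$ into subacts.

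The main (and really only) obstacle is the transitivity argument, which is exactly the point at which strong flatness—as opposed to mere flatness—is essential; without Condition (P) the relation $\approx$ need not be transitive, and indecomposable acts need not be locally cyclic in general. Everything else is bookkeeping. Once transitivity is in hand, indecomposability of $A$ collapses $\approx$ to $A \times A$, which is the definition of locally cyclic.
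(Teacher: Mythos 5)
Your proof is correct. Note that the paper does not prove this lemma at all---it is quoted from \cite{MR1839214} (Theorem 3.7)---so there is no internal argument to compare against; your write-up supplies a self-contained proof. The one substantive step, transitivity of $\approx$ via Condition (P), is applied correctly: from $ct=c's'$ you get $e$, $u$, $v$ with $c=eu$, $c'=ev$, $ut=vs'$, and then $e(us)=cs=a$ and $e(vt')=c't'=d$, so $a\approx d$. It is worth observing that your argument uses only Condition (P) and never Condition (E), so you have actually shown the slightly stronger statement that an indecomposable act satisfying Condition (P) is locally cyclic (which is indeed how the result appears in the literature); strong flatness enters only because it implies (P). The remaining verifications (reflexivity, symmetry, closure of the $\approx$-classes under the action, and the use of indecomposability to collapse the partition to a single class) are all as routine as you claim.
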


\begin{lemma}\label{lem_SFquot_PureInc}
(See Corollary 5.1(3) and the paragraph at the bottom of page 599 of \cite{MR3251751}).  Suppose $S$ is a monoid and $A \subset B$ is an inclusion of nonempty $S$-acts.  If $B/A$ is strongly flat, then the inclusion is pure.
\end{lemma}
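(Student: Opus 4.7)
The plan is to derive this directly from the cited reference \cite[Corollary 5.1(3)]{MR3251751}, whose argument rests on Stenstr\"om's characterization of strong flatness by Conditions (P) and (E) (\cite{MR1751666}, III.9). Recall that purity of $A \subset B$ is equivalent to the assertion that every finite system of equations with parameters from $A$ which is solvable in $B$ is already solvable in $A$.

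I would take such a system together with a solution $(b_1, \dots, b_n) \in B^n$ and push it down along the Rees quotient map $\pi\colon B \to B/A$: the parameters from $A$ all collapse to the fixed element $\theta := [A]$, and the tuple $(\pi(b_1), \dots, \pi(b_n))$ solves the projected system inside the strongly flat act $B/A$. Conditions (P) and (E) can then be applied to each projected equation to factor $\pi(b_i)$ as $z_i u_i$ in $B/A$ in such a way that the action of the $u_i$ exactly controls which terms land at $\theta$. Lifting these factorizations back to $B$, using that elements of $B/A \setminus \{\theta\}$ correspond bijectively to elements of $B \setminus A$ and that an element sent to $\theta$ by an action of $S$ necessarily lies in $A$, should produce a tuple in $A^n$ solving the original system.

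The main obstacle I anticipate is the bookkeeping needed to satisfy all finitely many equations simultaneously in $A$, since a replacement chosen for one variable can disrupt equations involving another. The cleanest remedy is to invoke the categorical form of strong flatness: $B/A$ is a filtered colimit of finitely generated free acts, so any finite system factors through a single finitely generated free stage $F_j$ in the colimit diagram, where a coherent choice of replacements $a_i' \in A$ can be read off directly. Because the statement is already available in \cite{MR3251751}, I would present the proof as a short quotation of that reference rather than reproducing the full argument.
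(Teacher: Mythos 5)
The paper itself offers no argument for this lemma: it is stated as a bare citation to \cite{MR3251751}, so your bottom-line plan --- quote the reference --- coincides exactly with what the paper does, and as a deliverable it is unobjectionable. The issue is with the sketch you offer as the mathematical substance, which has a genuine gap at the lifting step and would not survive being fleshed out as written.

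Everything your sketch produces --- the factorizations $\pi(b_i)=z_iu_i$ from Conditions (P) and (E), or the finitely generated free stage $F_j$ through which the projected system factors --- lives in $B/A$, and the only tool you name for pulling data back to $B$ is that $\pi$ restricts to a bijection $B\setminus A\to (B/A)\setminus\{\theta\}$. Lifting a factorization of a nonzero $c_i=\pi(b_i)$ along that bijection just recovers $b_i$ itself and yields nothing in $A$; and the fiber over $\theta$ is all of $A$, so there is no canonical way to ``read off'' coherent replacements $a_i'\in A$ from the free stage --- that choice is precisely the nontrivial bookkeeping you acknowledge, not something the colimit presentation hands you. Your auxiliary claim that ``an element sent to $\theta$ by an action of $S$ necessarily lies in $A$'' is either false (a $b\in B\setminus A$ can satisfy $bs\in A$) or, on the charitable reading, just restates $\pi^{-1}(\theta)=A$ and gives no leverage. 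Note also that your sketch never engages with Condition (E) at the zero $\theta$ of $B/A$, which is where strong flatness (as opposed to Condition (P) alone) bites --- it is what forces $S$ to be left-collapsible (cf.\ the paper's use of \cite[Theorem 6.2(7)]{MR1899443}) and is the natural place to look for the mechanism that actually moves a solution into $A$. In short: the citation is fine and matches the paper; the sketch, as written, is not a proof and should not be presented as one.
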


\subsection{Proof of Theorem \ref{thm_SF},  \ref{item_SF_MonoCofibGen} $\implies$ \ref{item_SF_LeftCollapseIndec}}

Assume $\mathcal{SF}$-Mono is cofibrantly generated in $\overline{\text{Act}}$-$S$.  Then in particular it is closed under compositions, and by Fact \ref{fact_CharCofClos} there is a set $\mathcal{M}_0$ such that
\[
\mathcal{SF}\text{-Mono} =  \text{Rt} \ \underbrace{\text{cell}(\mathcal{M}_0)}_{\mathcal{C}:=}. 
\]
So by Theorem \ref{thm_NewChar} there is a cardinal $\kappa > |S|$ such that
\begin{equation}\label{eq_aa_W_SF_lc}
\text{aa}^*_\kappa \mathfrak{W} \left( \forall f \in \mathfrak{W} \cap \mathcal{C} \right) \left( f \restriction \mathfrak{W} \text{ and } r^{f,\mathfrak{W}} \text{ are in } \mathcal{C} \right)
\end{equation}
as witnessed by some parameter $p$.\footnote{Recall this means that if 
\[
S \cup \{ S,p \} \subset \mathfrak{W} \prec_* (V,\in)
\]
and $\mathfrak{W} \cap \kappa$ is transitive, then for every $f \in \mathfrak{W} \cap \mathcal{C}$, both $f \restriction \mathfrak{W}$ and $r^{f,\mathfrak{W}}$ are in $\mathcal{C}$. }

First we see why $S$ must be left-collapsible.  Fix any $f:A \to B$ in $\mathcal{C}$, and without loss of generality assume $f$ is an inclusion.  If $A \ne \emptyset$ then strong flatness of $B/A$ implies $S$ is left collapsible by \cite[Theorem 6.2(7)]{MR1899443}.  So assume $A = \emptyset$.  By Fact \ref{fact_LS_class} there is an $\mathfrak{N}$ such that $S \cup \{f, p,S \} \subset \mathfrak{N} \prec_* (V,\in)$ and $\mathfrak{N} \cap \kappa$ is transitive.    Since $A = \emptyset$ and the inclusion $A \subset B$ is in $\mathcal{C} \subset \mathcal{SF}$-Mono, then $B$ is strongly flat, and \eqref{eq_aa_W_SF_lc} implies $r^{f,\mathfrak{N}}$ is in $\mathcal{SF}$-Mono.  Since $A = \emptyset$, $r^{f,\mathfrak{N}}$ is just the inclusion from $\mathfrak{N} \cap B$ into $B$.  Then $\frac{B}{\mathfrak{N} \cap B}$ is strongly flat, and $\mathfrak{N} \cap B$ is nonempty by elementarity of $\mathfrak{N}$.  Then again by \cite[Theorem 6.2(7)]{MR1899443} this implies $S$ is left-collapsible.

It remains to show that all indecomposable members of $\mathcal{SF}$ have size at most $\kappa$.  By Lemma \ref{lem_SF_locallyCyc}, to prove the latter claim is equivalent to proving that the locally cyclic members of $\mathcal{SF}$ have size at most $\kappa$.  Suppose, toward a contradiction, that there is a locally cyclic, strongly flat act $F$ with $|F|>\kappa$.  Using Fact \ref{fact_LS_class} (with the cardinal $\kappa^+$) there is an $\mathfrak{M}$ such that $S \cup \{ p, S,F, \kappa \} \subset \mathfrak{M}  \prec_* (V,\in)$ and $|\mathfrak{M}|=\kappa \subset \mathfrak{M}$.  Using Fact \ref{fact_LS_class} again (this time with cardinal $\kappa$), there is an $\mathfrak{N}$ such that $S \cup \{p, S,F,\kappa, \mathfrak{M} \} \subset \mathfrak{N} \prec_* (V,\in)$, $|\mathfrak{N}|<\kappa$, and $\mathfrak{N} \cap \kappa$ is transitive.  We emphasize here that $\mathfrak{M}$ is an element of, but not a subset of, $\mathfrak{N}$.

Consider the empty monomorphism $f: \emptyset \subset F$, whose Rees quotient is just $F$ and hence is a member of $\mathcal{SF}$-Mono (and also a member of $\mathfrak{M}$, since $F \in \mathfrak{M}$).  By \eqref{eq_aa_W_SF_lc}, both $f \restriction \mathfrak{M}$ and $r^{f,\mathfrak{M}}$ are in $\mathcal{SF}$-Mono.  The pushout of the span $\emptyset \leftarrow \emptyset \rightarrow \mathfrak{M} \cap F$ is just $\mathfrak{M} \cap F$ and $r^{f,\mathfrak{M}}$ is just the inclusion $i: \mathfrak{M} \cap F \subset F$.  In summary, 
\begin{equation}\label{eq_i_M_in_SF_Mono}
\text{ The inclusion } i: \mathfrak{M} \cap F \to F \text{ is in } \mathcal{SF} \text{-mono}.
\end{equation}

Since $\mathfrak{M}$ and $F$ are elements of $\mathfrak{N}$, the inclusion $i: \mathfrak{M} \cap F \to F$ is an element of $\mathfrak{N}$, so it makes sense to form the diagram $\mathcal{D}^{i,\mathfrak{N}}$ from Definition \ref{def_MainDiag} displayed below:

\[\begin{tikzcd}
	{\mathfrak{M} \cap F} &&&&& F \\
	\\
	&& P \\
	\\
	{\mathfrak{N} \cap (\mathfrak{M} \cap F)} &&&&& {\mathfrak{N} \cap F}
	\arrow["{i \in \mathcal{SF} \text{-mono}}", from=1-1, to=1-6]
	\arrow[from=1-1, to=3-3]
	\arrow["{r^{i,\mathfrak{N}} }"', from=3-3, to=1-6]
	\arrow["\ulcorner"{anchor=center, pos=0.125, rotate=-90}, draw=none, from=3-3, to=5-1]
	\arrow["\subset", from=5-1, to=1-1]
	\arrow["{i \restriction \mathfrak{N} }"', from=5-1, to=5-6]
	\arrow["\subset"', from=5-6, to=1-6]
	\arrow[from=5-6, to=3-3]
\end{tikzcd}\]

Again by \eqref{eq_aa_W_SF_lc} (this time using the map $i$ in place of $f$), we conclude that $i \restriction \mathfrak{N}$ and $r^{i,\mathfrak{N}}$ are in $\mathcal{SF}$-mono.  Now $r^{i,\mathfrak{N}}$ is just the inclusion from $P=(\mathfrak{M} \cap F) \cup (\mathfrak{N} \cap F)$ into $F$; so
\begin{equation}
\frac{F}{P}=\frac{F}{(\mathfrak{M} \cap F) \cup (\mathfrak{N} \cap F)} \ \in \mathcal{SF}.
\end{equation}

By Lemma \ref{lem_SFquot_PureInc}, 

\begin{equation}\label{eq_P_to_F_pure}
\text{The inclusion } P \to F \text{ is pure.}
\end{equation}

We will use the local cyclicity of $F$, together with the properties of $\mathfrak{M}$ and $\mathfrak{N}$, to contradict \eqref{eq_P_to_F_pure}.  Notice:
\begin{itemize}
 \item  $|\mathfrak{M} \cap F| = \kappa$.  The $\le$ direction immediately follows from the fact that $|\mathfrak{M}|=\kappa$.  To see the $\ge$ direction:  since $|F|>\kappa$, $(V,\in)$ satisfies ``there exists an injection from $\kappa$ to $F$".  Since both $\kappa$ and $F$ are elements of $\mathfrak{M}$, it follows by elementarity that there is an injection $f: \kappa \to F$ with $f \in\mathfrak{M}$.  Since $\kappa \subset \mathfrak{M}$, for each $\alpha < \kappa$, $f(\alpha)$ is also an element of $\mathfrak{M}$.  So $\mathfrak{M}$ contains (as a subset) the range of $f$, which is a $\kappa$-sized subset of $F$.
 
 \item $(\mathfrak{M} \cap F) \setminus (\mathfrak{N} \cap F)$ is non-empty, because $|\mathfrak{M} \cap F|=\kappa > |\mathfrak{N}|$.

 \item $(\mathfrak{N} \cap F) \setminus (\mathfrak{M} \cap F)$ is non-empty.  To see this, note that since $|F|>\kappa = |\mathfrak{M} \cap F|$, the set difference $F \setminus \mathfrak{M}$ is non-empty.  Since $\mathfrak{N} \prec_* (V,\in)$, $\mathfrak{N} \models$ ``$F \setminus \mathfrak{M}$ is non-empty".  So there is some $x \in \mathfrak{N} \cap F$ with $x \notin \mathfrak{M}$.

\end{itemize}

Hence, there exists a pair $x,y \in F$ with  $x \in (\mathfrak{M} \cap F) \setminus (\mathfrak{N} \cap F)$ and $y \in (\mathfrak{N} \cap F)\setminus (\mathfrak{M} \cap F)$.  Since $F$ is locally cyclic there are $z \in F$ and $s,t \in S$ such that
\begin{align*}
zs &=x   \in (\mathfrak{M} \cap F) \setminus (\mathfrak{N} \cap F) \\
zt &=y  \in (\mathfrak{N} \cap F) \setminus (\mathfrak{M} \cap F).
\end{align*}

Now $x$ and $y$ are both in $P$, so by \eqref{eq_P_to_F_pure}, there is a $z_P \in P =(\mathfrak{M} \cap F) \cup (\mathfrak{N} \cap F)$ such that 
\begin{align}
z_P s &= x \in (\mathfrak{M} \cap F) \setminus (\mathfrak{N} \cap F) \label{eq_x}\\
z_P t &= y \in (\mathfrak{N} \cap F) \setminus (\mathfrak{M} \cap F). \label{eq_y}
\end{align}
Now $z_P$ is either in $\mathfrak{N} \cap F$ or $\mathfrak{M} \cap F$; in the former case equation \eqref{eq_x} violates closure of $\mathfrak{N} \cap F$ under the action, and in the latter case equation \eqref{eq_y} violates closure of $\mathfrak{M} \cap F$ under the action.

\begin{remark}\label{rem_bound_loc_cyc}
A similar argument shows that if the class of pure monomorphisms is cofibrantly generated in the category of $S$-acts, then there is a bound on the size of the locally cyclic $S$-acts.  That result, proved initially by the author using Theorem \ref{thm_NewChar}, was subsequently strengthened by coauthors in \cite{Cox_et_al_CofGenPureMon} and replaced with a model-theoretic argument. 
\end{remark}

\subsection{Proof of Theorem \ref{thm_SF},  \ref{item_SF_LeftCollapseIndec}
$\implies$ \ref{item_SF_MonoCofibGen}}

Assume $S$ is left-collapsible,\footnote{For all $s,t \in S$ there is a $u \in S$ with $us=ut$.  This implies that $S$ is right-reversible.} $\mathcal{SF}$-Mono is closed under compositions in $\overline{\text{Act}}$-$S$, and there is a bound on the size of the indecomposable members of $\mathcal{SF}$.  Closure of $\mathcal{SF}$-Mono under compositions implies (via the proof of \cite[Theorem 3.11]{MR3249868}) that $\mathcal{SF}$-Mono is cofibrantly closed; so Theorem \ref{thm_NewChar} is applicable.

Suppose $\kappa$ is a regular uncountable cardinal larger than $|S|$ and such that every indecomposable strongly flat act has size $<\kappa$.  We claim $\mathcal{SF}$-mono is $\kappa$-a.e.\ effective, which will imply, via the \ref{item_AE} $\implies$ \ref{item_CellGen} direction of Theorem \ref{thm_NewChar}, that $\mathcal{SF}$-mono is cellularly generated in $\overline{\text{Act}}$-$S$.

Suppose 
\[
S \cup \{ S \} \subset \mathfrak{N} \prec_* (V,\in)
\]
with $\mathfrak{N} \cap \kappa$ transitive, and suppose $f: A \subset B$ is a member of $\mathcal{SF}$-mono, i.e., $B/A$ strongly flat, such that $f \in \mathfrak{N}$.  We need to prove that $f \restriction \mathfrak{N}$ and $r^{f,\mathfrak{N}}$ are both in $\mathcal{SF}$-mono.  Similarly to the proof of Theorem \ref{thm_Fmono_cofibgen}, it will suffice to prove that, letting $F:=B/A$, both $\mathfrak{N} \cap F$ and $\frac{F}{\mathfrak{N} \cap F}$ are strongly flat.  By elementarity of $\mathfrak{N}$, there is an index set $I$ and a partition 
\[
F = \bigsqcup_{i \in I} F_i
\]
of $F$ into indecomposable (strongly flat) subacts such that the sequence $\langle F_i \ : \ i \in I \rangle$ is an element of $\mathfrak{N}$.  Moreover, elementarity of $\mathfrak{N}$ ensures that if $x \in F \cap \mathfrak{N}$ then the unique $i_x \in I$ such that $x \in F_{i_x}$ is an element of $\mathfrak{N}$.  In contrapositive form:
\begin{equation}\label{eq_outside_N}
i \in I \setminus \mathfrak{N} \  \ \implies \ \mathfrak{N} \cap F_i = \emptyset.
\end{equation}

Now if $i \in \mathfrak{N} \cap I$, elementarity of $\mathfrak{N}$ ensures $F_i \in \mathfrak{N}$; and since $F_i$ is indecomposable and strongly flat, its cardinality is $<\kappa$ by assumption.  Since $\mathfrak{N} \cap \kappa$ is transitive and $F_i$ is a $<\kappa$-sized element of $ \mathfrak{N}$, Fact \ref{fact_element_subset} ensures that $F_i \subset \mathfrak{N}$.  We have shown:
\begin{equation}\label{eq_i_in_N}
i \in \mathfrak{N} \cap I \ \implies \ F_i \subset \mathfrak{N} \cap F.
\end{equation}

Then \eqref{eq_outside_N} and \eqref{eq_i_in_N} imply that 
\[
\mathfrak{N} \cap F = \bigsqcup_{i \in \mathfrak{N} \cap I} F_i \text{ and } F \setminus (\mathfrak{N} \cap F) = \bigsqcup_{ i \in I \setminus \mathfrak{N}} F_i.
\]
In particular, $F \setminus  (\mathfrak{N} \cap F)$ is a subact of $F$.

Since strong flatness is preserved by disjoint unions, the left equality immediately implies $\mathfrak{N} \cap F$ is strongly flat.  And the right equality implies
\begin{equation}\label{eq_DisjointUnionFlat}
\frac{F}{\mathfrak{N} \cap F} \simeq \Theta_S \sqcup \bigsqcup_{i \in I \setminus \mathfrak{N}} F_i,
\end{equation}
where $\Theta_S$ denotes the single-point act.  Since $S$ is left-collapsible, $\Theta_S$ is strongly flat (\cite[III Corollary 13.7 and Exercise 14.3(4)]{MR1751666}).  So by closure of $\mathcal{SF}$ under disjoint unions, the quotient in \eqref{eq_DisjointUnionFlat} is strongly flat.  So both $f \restriction \mathfrak{N}$ and $r^{f,\mathfrak{N}}$ are in $\mathcal{SF}$-Mono, completing the proof that $\mathcal{SF}$-Mono is $\kappa$-a.e.\ effective.

\section{Proof of Theorem \ref{thm_UnitaryFlat} }\label{sec_Unitary}

Fix a monoid $S$ and a class $\mathcal{X}$ of $S$-acts with the property that for any collection $X_i$ of $S$-acts,
\[
\bigsqcup_{i \in I} X_i \in \mathcal{X} \ \iff \ \forall i \in I \ X_i \in \mathcal{X}.
\]
Recall $\mathcal{U}_{\mathcal{X}}$ denotes the class of monomorphisms $f: A \to B$ such that $B \setminus f(A)$ is a member of $\mathcal{X}$.  

Suppose first that $\mathcal{U}_{\mathcal{X}}$ is cofibrantly generated by some set $\mathcal{U}_0$.  Let $\mathcal{C}:= \text{cell}(\mathcal{U}_0)$.  By Theorem \ref{thm_NewChar} there is a $\kappa > |S|$ and a parameter $p$ witnessing 
\begin{equation}\label{eq_UnitC_reflect}
\left( \text{aa}^*_\kappa \mathfrak{N} \right) \left( \forall f \in \mathfrak{N} \cap \mathcal{C} \right)\left(  f \restriction \mathfrak{N} \in \mathcal{C} \text{ and } r^{f,\mathfrak{N}} \in \mathcal{C}\right).
\end{equation}

We claim that all indecomposable members of $\mathcal{X}$ have size $<\kappa$.  Suppose not; let $X$ be an indecomposable member of $\mathcal{X}$ of size at least $\kappa$, and consider $f: \emptyset \to X$.  By Fact \ref{fact_LS_class}, there is a $\mathfrak{N}$ of size $<\kappa$ such that $p,f \in \mathfrak{N} \prec_* (V,\in)$ and $\mathfrak{N} \cap \kappa \in \kappa$.  Then by \eqref{eq_UnitC_reflect}, $r^{f,\mathfrak{N}} \in \mathcal{U}_{\mathcal{X}}$.  Now $r^{f,\mathfrak{N}}$ is just the inclusion from $\mathfrak{N} \cap X \to X$; so membership of this inclusion in $\mathcal{C}$ ($\subseteq \mathcal{U}_{\mathcal{X}}$) implies $X \setminus (\mathfrak{N} \cap X)$ is closed under the $S$-action.  Since $|\mathfrak{N}|<\kappa \le |X|$, $X \setminus (\mathfrak{N} \cap X)$ is nonempty.  And elementarity of $\mathfrak{N}$ implies $\mathfrak{N} \cap X$ is also nonempty.  So $X$ is decomposable, contrary to assumptions.

\begin{remark}
We worked in $\overline{\text{Act}}$-$S$, but we could have also wrked in Act-$S$; instead of $\emptyset \to X$ one could work instead with the coproduct injection of $X$ into (say) the left part of $X \sqcup X$ (which is a member of $\mathcal{U}_{\mathcal{X}}$).
\end{remark}

For the other direction of Theorem \ref{thm_UnitaryFlat}, suppose $\mathcal{X}$ is closed under disjoint unions and that there is a bound, say $\kappa$, on the cardinalities of indecomposable members of $\mathcal{X}$.  Without loss of generality assume $\kappa > |S| + \aleph_0$.  We claim that $\mathcal{U}_{\mathcal{X}}$ is $\kappa$-a.e.\ effective, which by Theorem \ref{thm_NewChar} will imply cellular generation of $\mathcal{U}_{\mathcal{X}}$.

Assume
\[
S \cup \{ S \} \subset \mathfrak{N} \prec_* (V,\in)
\]
and $\mathfrak{N} \cap \kappa$ is transitive.  Fix any $f: A \to B$ in $\mathfrak{N} \cap \mathcal{U}_{\mathcal{X}}$.  Without loss of generality assume $f$ is an inclusion.  So 
\[
B = A \sqcup X
\]
and $X \in \mathcal{X}$.  By elementarity of $\mathfrak{N}$ and the assumption that $f \in \mathfrak{N}$  it follows that $X \in \mathfrak{N}$, $\mathfrak{N} \cap X$ is closed under the $S$-action, and
\[
\mathfrak{N} \cap B = (\mathfrak{N} \cap A) \sqcup (\mathfrak{N} \cap X). 
\]

The rest of the argument resembles the \eqref{item_SF_LeftCollapseIndec} $\implies$ \eqref{item_SF_MonoCofibGen} direction of the proof of Theorem \ref{thm_SF}.  By elementarity of $\mathfrak{N}$, there is a partition $\vec{X}=\langle X_i \ : \ i \in I \rangle$ of $X$ into connected components, with $\vec{X} \in \mathfrak{N}$.  The assumption about the class $\mathcal{X}$ implies that each $X_i$ is a member of $\mathcal{X}$.  So each $X_i$ is is an indecomposable member of $\mathcal{X}$ and hence $|X_i| < \kappa$ for each $i \in I$.  It follows (as in the argument of Theorem \ref{thm_SF}) that 
\[
\mathfrak{N} \cap X = \bigsqcup_{i \in \mathfrak{N} \cap I}  X_i \text{ and } X \setminus (\mathfrak{N} \cap X) = \bigsqcup_{i \in I \setminus \mathfrak{N}} X_i.
\]
Then closure of $\mathcal{X}$ under disjoint unions yields
\begin{equation}\label{eq_N_cap_X_in_X}
\mathfrak{N} \cap X \in \mathcal{X}
\end{equation}
and
\begin{equation}\label{eq_CompNcapX}
X \setminus (\mathfrak{N} \cap X) \in \mathcal{X}.
\end{equation}

The diagram $\mathcal{D}^{f,\mathfrak{N}}$ looks like this, with all arrows inclusions:
\[\begin{tikzcd}
	A && {B=A\sqcup X} \\
	& \begin{array}{c} A \cup (\mathfrak{N} \cap B) \\=A \sqcup (\mathfrak{N} \cap X) \end{array} \\
	{\mathfrak{N} \cap A} && {\mathfrak{N} \cap B = (\mathfrak{N} \cap A) \sqcup (\mathfrak{N} \cap X)}
	\arrow[hook, from=1-1, to=1-3]
	\arrow[curve={height=12pt}, hook, from=1-1, to=2-2]
	\arrow["{r^{f,\mathfrak{N}}}"', hook, from=2-2, to=1-3]
	\arrow["\ulcorner"{anchor=center, pos=0.125, rotate=-90}, draw=none, from=2-2, to=3-1]
	\arrow[hook, from=3-1, to=1-1]
	\arrow["{f \restriction \mathfrak{N}}"', hook, from=3-1, to=3-3]
	\arrow[hook, from=3-3, to=1-3]
	\arrow[curve={height=-12pt}, from=3-3, to=2-2]
\end{tikzcd}\]

Then $f \restriction \mathfrak{N}$ is in $\mathcal{U}_{\mathcal{X}}$ because of \eqref{eq_N_cap_X_in_X} and $r^{f,\mathfrak{N}}$ is in $\mathcal{U}_{\mathcal{X}}$ because of \eqref{eq_CompNcapX}.

\section{Some Questions}\label{sec_Questions}

Recall Corollary \ref{cor_Positive}: if $S$ is right reversible and flat acts are closed under stable Rees extensions, then FCC holds for Act-$S$.  We conjecture that the closure of flat acts under stable extensions is superfluous.  In other words:

\begin{conjecture}
FCC holds for Act-$S$ whenever $S$ is right-reversible.
\end{conjecture}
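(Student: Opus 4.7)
My proposal is to replace the class $\mathcal{F}$-Mono in the argument of Theorem \ref{thm_Fmono_cofibgen} with the smaller class $\mathcal{F}$-PureMono, which is always closed under composition in $\overline{\text{Act}}$-$S$ by \cite[Theorem IV.1.5]{renshaw1986flatness}, regardless of whether flats are closed under stable Rees extensions. One first verifies that $\mathcal{F}$-PureMono is cellularly closed in $\overline{\text{Act}}$-$S$: closure under pushouts uses cofibrant closure of PureMono (\cite{MR2280436}) together with preservation of Rees quotients under pushouts of nonempty acts, and closure under transfinite compositions uses that directed colimits of flats are flat. One then aims to apply Theorem \ref{thm_NewChar} to conclude cellular generation of $\mathcal{F}$-PureMono, and finally Lemma \ref{lem_CellGenExpandedGivesCovers} (with $\mathcal{X}=\mathcal{F}$, $\mathcal{M}=$ PureMono) to obtain the FCC in Act-$S$.

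The core task is to verify that $\mathcal{F}$-PureMono is $\kappa$-almost-everywhere effective for some regular $\kappa > |S|$. Fix $\mathfrak{N}\prec_{*}(V,\in)$ containing $S\cup\{S\}$, with $\mathfrak{N}\cap\kappa$ transitive, and fix $f:A\hookrightarrow B$ in $\mathfrak{N}\cap\mathcal{F}$-PureMono. By Lemma \ref{lem_HomThm}, the Rees quotients of $f\restriction\mathfrak{N}$ and $r^{f,\mathfrak{N}}$ are isomorphic to $\mathfrak{N}\cap F$ and $F/(\mathfrak{N}\cap F)$ respectively (where $F=B/A$), both flat by Corollary \ref{cor_TraceFlat} and right-reversibility of $S$. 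Purity of $f\restriction\mathfrak{N}$ is the easy half: if $\bar a\in\mathfrak{N}\cap A$ and $\mathfrak{N}\cap B\models\exists\bar x\,\varphi(\bar x,\bar a)$ for a primitive positive formula $\varphi$, then elementarity of $\mathfrak{N}\cap B$ in $B$ yields $B\models\exists\bar x\,\varphi(\bar x,\bar a)$, purity of $f$ yields $A\models\exists\bar x\,\varphi(\bar x,\bar a)$, and elementarity of $\mathfrak{N}\cap A$ in $A$ (Lemma \ref{lem_ElementaryIsPure}) produces the witness in $\mathfrak{N}\cap A$.

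The main obstacle is showing $r^{f,\mathfrak{N}}$ is pure, i.e., that $A\cup(\mathfrak{N}\cap B)$ is a pure subact of $B$. Observing that $A\cup(\mathfrak{N}\cap B)$ coincides with the preimage $\pi^{-1}(\mathfrak{N}\cap F)$ under the Rees quotient $\pi:B\twoheadrightarrow F$, this reduces to a purely algebraic lemma: \emph{if $A$ is a pure subact of $B$ with $F=B/A$ flat, and $P$ is a pure subact of $F$, then $\pi^{-1}(P)$ is a pure subact of $B$}. This is the direct analogue of a $3\times 3$-style fact for pure-exact sequences in module categories, and a proof via the tensor-with-finitely-presented-left-act characterization of purity should be feasible, using flatness of $F$ to lift finite equation systems from $F$ back to $B$ compatibly with purity of $A$ in $B$. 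If this lemma can be established, the remainder of the argument is routine: the resulting a.e.\ effectiveness delivers cellular generation of $\mathcal{F}$-PureMono by Theorem \ref{thm_NewChar}, and then Lemma \ref{lem_CellGenExpandedGivesCovers} yields the FCC for any right-reversible monoid $S$. Should the preimage-of-pure lemma fail in general, an alternative would be to replace $\mathcal{F}$-PureMono by a canonically defined subclass tailored precisely so that the preservation under $r^{f,\mathfrak{N}}$ holds by construction, at the cost of having to re-verify cellular closure.
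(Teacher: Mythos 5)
The statement you are addressing is stated in the paper as a \emph{conjecture}, and the paper offers no proof of it; your proposal is, almost verbatim, the attack that the paper itself outlines in Section \ref{sec_Questions} (it is the content of Conjecture \ref{conj_RR_FPureMono} together with Lemma \ref{lem_CellGenExpandedGivesCovers}). The parts of your argument that do work are exactly the parts the paper already records as working: $\mathcal{F}$-PureMono is cellularly closed, the Rees quotients of $f \restriction \mathfrak{N}$ and $r^{f,\mathfrak{N}}$ are flat by Lemma \ref{lem_HomThm} and Corollary \ref{cor_TraceFlat} (using right-reversibility), and $f \restriction \mathfrak{N}$ is pure by elementarity. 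The entire difficulty is concentrated in the one step you defer: showing that $r^{f,\mathfrak{N}}$, i.e., the inclusion $A \cup (\mathfrak{N} \cap B) \hookrightarrow B$, is \emph{pure}. You reduce this to a ``preimage-of-pure'' lemma and assert that a proof ``should be feasible'' by analogy with the $3\times 3$ lemma for pure-exact sequences of modules, but you do not prove it, and this is precisely the point at which the paper says ``we do not know if $r^{f,\mathfrak{N}}$ is pure.'' A conditional sketch whose key lemma is the open problem is not a proof.

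The module analogy is also genuinely misleading here, which is why the gap is not merely cosmetic. In $R$-Mod the relevant fact is: if $A \le C \le B$ with $A$ pure in $B$ and $C/A$ pure in $B/A$, then $C$ is pure in $B$; its proof uses additivity (translating a solution by an element of $A$). For acts, the Rees quotient collapses all of $A$ to a single point, so a finite system of equations with parameters in $P = \pi^{-1}(\mathfrak{N} \cap F)$ that is solvable in $B$ may project to a system in $F$ whose solution runs through the collapsed class $[A]$; lifting that solution back into $P$ requires choosing elements of $A$ coherently, and there is no subtraction available to adjust a given solution in $B$ into one lying in $A \cup (\mathfrak{N}\cap B)$. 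What one does get for free is stability of $r^{f,\mathfrak{N}}$ (since its Rees quotient is flat, by Lemma \ref{lem_Renshaw}), but stability is strictly weaker than purity, and flatness of $F$ is a tensor-theoretic condition that does not obviously supply the condition-(P)-style interpolation your lifting argument needs. Your fallback of ``replacing $\mathcal{F}$-PureMono by a tailored subclass'' is not developed at all. So the proposal correctly identifies the right strategy and the right obstruction, but it does not close the conjecture; to turn it into a proof you would need to either prove the preimage-of-pure lemma for acts over right-reversible monoids or find a different cellularly closed subclass of $\mathcal{U}_{\mathcal{F}}$-type morphisms for which almost-everywhere effectiveness can be verified.
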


If $S$ is right reversible and $\mathcal{F}_S$ is \emph{not} closed under stable extensions,\footnote{We actually are not aware of such examples, but suspect they exist.} then $\mathcal{F}_S$-Mono is not cellularly closed in $\overline{\text{Act}}$-$S$; it's not even closed under ordinary composition.  But $\mathcal{F}_S$-PureMono (\emph{pure} monomorphisms with flat Rees quotient) is always closed under composition (\cite[Theorem IV.1.5]{renshaw1986flatness}) and is in fact cellularly closed in $\overline{\text{Act}}$-$S$.  If it is cellularly generated in $\overline{\text{Act}}$-$S$, then FCC holds in Act-$S$, by Lemma \ref{lem_CellGenExpandedGivesCovers}.  Hence the following is a stronger conjecture than the one above:

\begin{conjecture}\label{conj_RR_FPureMono}
If $S$ is right-reversible then $\mathcal{F}$-PureMono is cellularly generated.
\end{conjecture}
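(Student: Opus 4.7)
The plan is to apply Theorem~\ref{thm_NewChar}: since $\mathcal{F}$-PureMono is already cellularly closed in $\overline{\text{Act}}$-$S$ (as noted just before the conjecture), its cellular generation is equivalent to almost-everywhere effectiveness. Fix any regular $\kappa > |S| + \aleph_0$ and any sufficiently elementary $\mathfrak{N}$ with $S \cup \{S\} \subset \mathfrak{N} \prec_* (V,\in)$ and $\mathfrak{N} \cap \kappa$ transitive. For any $f \in \mathfrak{N} \cap \mathcal{F}$-PureMono, taken WLOG to be an inclusion $A \subseteq B$, the goal is to show that both $f \restriction \mathfrak{N}$ and $r^{f,\mathfrak{N}}$ from the diagram $\mathcal{D}^{f,\mathfrak{N}}$ lie in $\mathcal{F}$-PureMono.

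The flat-Rees-quotient part of each transfers verbatim from the proof of Theorem~\ref{thm_Fmono_cofibgen}: setting $F := B/A \in \mathfrak{N}$, Corollary~\ref{cor_TraceFlat} (using right-reversibility of $S$) shows that both $\mathfrak{N} \cap F$ and $F/(\mathfrak{N} \cap F)$ are flat, and Lemma~\ref{lem_HomThm} identifies these respectively with the Rees quotients of $f \restriction \mathfrak{N}$ and $r^{f,\mathfrak{N}}$. Purity of $f \restriction \mathfrak{N}: \mathfrak{N} \cap A \to \mathfrak{N} \cap B$ is equally quick: $\mathfrak{N} \cap A \subseteq A$ is pure by Lemma~\ref{lem_ElementaryIsPure}, composition with the hypothesized pure $A \subseteq B$ yields that $\mathfrak{N} \cap A \subseteq B$ is pure, and since $\mathfrak{N} \cap B \subseteq B$ is merely monomorphic, any finite system over $\mathfrak{N} \cap A$ solvable in $\mathfrak{N} \cap B$ is a fortiori solvable in $B$, hence solvable in $\mathfrak{N} \cap A$.

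The main obstacle is showing that $r^{f,\mathfrak{N}}: A \cup (\mathfrak{N} \cap B) \hookrightarrow B$ is pure. The natural route is a \emph{purity transfer lemma} for Act-$S$: if $A \subseteq P \subseteq B$ with $A \subseteq B$ pure and $P/A \subseteq B/A$ pure, then $P \subseteq B$ is pure. Granting this, the conclusion is immediate: $A \subseteq B$ is pure by hypothesis, while $P/A$ is naturally identified with $\mathfrak{N} \cap (B/A) \subseteq B/A$, which is pure by Lemma~\ref{lem_ElementaryIsPure} applied to the flat act $F = B/A \in \mathfrak{N}$. The hard part is the purity transfer lemma itself: given a finite system $\Sigma(\bar{x}; \bar{p})$ with $\bar{p} \in P$ and solution $\bar{b} \in B$, one pushes down to $B/A$, solves in $P/A$ by purity, and lifts to a partial solution $\bar{v}' \in P$; the equations whose right-hand sides lie in $P \setminus A$ are automatically satisfied, but equations with $A$-valued right-hand sides are potentially off by an element of $A$, and correcting these using purity of $A \subseteq B$ without breaking the cross-variable links is delicate precisely because Act-$S$ lacks subtraction. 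If this direct syntactic argument resists, the fallback is to exploit the fact that $\mathfrak{N} \cap B$ is not merely pure but \emph{elementarily} embedded in $B$ (in the language of $S$-acts), and to combine this stronger model-theoretic content with purity of $A \subseteq B$ via an amalgamation argument establishing existential closure of $A \cup (\mathfrak{N} \cap B)$ in $B$ directly.
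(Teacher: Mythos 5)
Your proposal does not close the argument: the statement you are asked to prove is stated in the paper as an open \emph{conjecture}, and the route you take is exactly the attempted route the paper itself describes in Section \ref{sec_Questions} --- reduce to $\kappa$-a.e.\ effectiveness via Theorem \ref{thm_NewChar}, handle $f\restriction\mathfrak{N}$ and the flatness of both Rees quotients by Lemma \ref{lem_HomThm}, Lemma \ref{lem_ElementaryIsPure}, and Corollary \ref{cor_TraceFlat}, and then get stuck on whether $r^{f,\mathfrak{N}}$ is \emph{pure}. Everything up to that point in your write-up is correct and matches the paper (the paper even records that $r^{f,\mathfrak{N}}$ is stable with flat Rees quotient), but the paper explicitly says ``we do not know if $r^{f,\mathfrak{N}}$ is pure,'' and that is precisely the step you have not supplied.

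The gap is your ``purity transfer lemma'': if $A\subseteq P\subseteq B$ with $A\subseteq B$ pure and $P/A\subseteq B/A$ pure (Rees quotients), then $P\subseteq B$ is pure. You do not prove this; you describe why the obvious syntactic attack fails (an equation whose right-hand side lands in $A$ is only determined modulo the Rees collapse, and Act-$S$ has no subtraction with which to correct the witness without breaking the links to the other variables), and you then offer a fallback ``amalgamation argument'' that is not carried out. The identification of $P/A$ with $\mathfrak{N}\cap(B/A)$ is fine, and the lemma would indeed imply the conjecture, so it cannot be expected to follow from routine manipulations: the module-category analogue relies on additivity, and the paper's Theorem \ref{thm_LO} only recovers purity of $r^{f,\mathfrak{N}}$ under the much stronger hypothesis that $S$ is right-LO (where PureMono itself is cofibrantly generated). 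For merely right-reversible $S$, PureMono can fail to be cofibrantly generated, which is at least a warning sign that purity does not reflect to $\mathfrak{N}$-type subacts for free. Until you either prove the transfer lemma or exhibit a genuinely different mechanism forcing $A\cup(\mathfrak{N}\cap B)\hookrightarrow B$ to be pure, the argument establishes only what the paper already knows.
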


One can attempt to prove Conjecture \ref{conj_RR_FPureMono} in a manner similar to the proof of Theorem \ref{thm_LO}.  Fix $\kappa > |S|+\aleph_0$ and attempt to prove $\mathcal{F}$-PureMono is $\kappa$-a.e.\ effective.  Suppose $f: A \to B$ is in $\mathcal{F}$-PureMono, and that 
\[
S \cup \{ S, f \} \subset \mathfrak{N} \prec_* (V,\in),
\]
with $\mathfrak{N} \cap \kappa$ transitive.  We need to prove that $f \restriction \mathfrak{N}$ and $r^{f,\mathfrak{N}}$ are both in $\mathcal{F}$-PureMono.  This is straightforward for $f \restriction \mathfrak{N}$, using elementarity of $\mathfrak{N}$.  And, as with the proof of Theorem \ref{thm_Fmono_cofibgen}, $r^{f,\mathfrak{N}}$ is a member of $\mathcal{F}$-Mono, and hence in $\mathcal{F}$-StableMono (since its Rees quotient is flat, see discussion after the statement of Theorem \ref{thm_Fmono_cofibgen}).  I.e., $r^{f,\mathfrak{N}}$ is stable, with flat Rees quotient.  But we do not know if $r^{f,\mathfrak{N}}$ is pure.  In certain situations we can conclude it, such as our Theorem \ref{thm_LO}, where we assumed $S$ is right-LO.  If $S$ is \emph{not} right-LO, it is known (\cite{Cox_et_al_CofGenPureMon}) that PureMono is \emph{not} cofibrantly generated in $\overline{\text{Act}}$-$S$.  So in that situation one couldn't simply use the same ``intersection" trick 
\[
\mathcal{F}\text{-PureMono} = \text{PureMono} \ \cap \ \mathcal{F} \text{-Mono}
\]
we used to prove Theorem \ref{thm_LO}.  But this still leaves open the possibility that $\mathcal{F}$-PureMono might be cofibrantly generated even for some right-reversible $S$ that aren't right-LO.

Our main tool for all the results, Theorem \ref{thm_NewChar}, says that a cellularly-closed class $\mathcal{M}$ of monomorphisms is cellularly generated if and only if $\mathcal{M}$ is almost-everywhere effective,  which is a set-theoretic notion.  
\begin{question}
Translate the ``almost everywhere effective" language of Definition \ref{def_EffectiveSubobj}, and the corresponding Theorem \ref{thm_NewChar}, into category-theoretic language.  
\end{question}

\noindent Theorem 3.1 of \cite{MR4594300} comes close, but only under a continuity assumption on the class $\mathcal{M}$ that fails, for example, in \textbf{Ab} when $\mathcal{M}$ is the (cofibrantly-closed) class of abelian group monomorphisms with free quotient.

\begin{bibdiv}
\begin{biblist}
\bibselect{../../../MasterBibliography/Bibliography}
\end{biblist}
\end{bibdiv}

\end{document}